\patchcmd{\subsection}{-.5em}{.5em}{}{}
\patchcmd{\subsubsection}{-.5em}{.5em}{}{}
\numberwithin{equation}{section}
\newcommand{\cO}{\mathcal{O}}
\newcommand{\cT}{\mathcal{T}}
\newcommand{\bC}{\mathbb{C}}
\newcommand{\bH}{\mathbb{H}}
\newcommand{\bQ}{\mathbb{Q}}
\newcommand{\bR}{\mathbb{R}}
\newcommand{\bZ}{\mathbb{Z}}
\newcommand{\ra}{\rightarrow}
\newcommand{\qand}{\quad \textrm{and} \quad}
\newcommand\subsetsim{\mathrel{%
\ooalign{\raise0.2ex\hbox{$\subset$}\cr\hidewidth\raise-0.8ex\hbox{\scalebox{0.9}{$\sim$}}\hidewidth\cr}}}
\newcommand{\eps}{\varepsilon}
\DeclareMathOperator{\pr}{pr}
\DeclareMathOperator{\End}{End}
\DeclareMathOperator{\Prob}{Prob}
\DeclareMathOperator{\IP}{IP}
\renewcommand{\phi}{\varphi}
\definecolor{lichtgrijs}{gray}{0.95}
\theoremstyle{theorem}
\newtheorem{theorem}{Theorem}[section]
\newtheorem{conjecture}{Conjecture}[section]
\newtheorem{corollary}[theorem]{Corollary}
\newtheorem{proposition}[theorem]{Proposition}
\newtheorem{lemma}[theorem]{Lemma}
\theoremstyle{task}
\theoremstyle{definition}
\newtheorem{remark}[theorem]{Remark}
\newtheorem{example}[theorem]{Example}
\newtheorem*{question}{Question}
\DeclareMathSymbol{\shortminus}{\mathbin}{AMSa}{"39}
\begin{document}
\bibliographystyle{plain} 

\title[Szemer\'edi's Theorem in approximate lattices]{A Szemer\'edi type theorem for sets of positive density in approximate lattices}

\author{Michael Bj\"orklund}
\address{Department of Mathematics, Chalmers, Gothenburg, Sweden}
\email{micbjo@chalmers.se}
\thanks{}

\author{Alexander Fish}
\address{School of Mathematics and Statistics F07, University of Sydney, NSW 2006,
Australia}
\curraddr{}
\email{alexander.fish@sydney.edu.au}
\thanks{}

\subjclass[2020]{Primary: 11B30, 22D40. Secondary: 05D10}
\keywords{Szemer\'edi's Theorem, approximate lattice, cut-and-project sets}

\begin{abstract}
An extension of Szemer\'edi's Theorem is proved for sets of positive density in approximate lattices in general locally compact and second countable abelian groups. As a consequence, we establish a recent conjecture of Klick, Strungaru and Tcaciuc. Via a novel version of Furstenberg's Correspondence principle, which should be of independent interest, we show that our Szemer\'edi Theorems can be deduced from a general \emph{transverse} multiple recurrence theorem, which we establish using recent works of Austin.
\end{abstract}

\maketitle

\section{Introduction}

\subsection{Szemer\'edi's Theorem and the aim of the paper}

The \emph{upper asymptotic density} $\overline{d}(P_o)$ of a subset $P_o \subset \bZ$
is defined by
\[
\overline{d}(P_o) = \varlimsup_{n \ra \infty} \frac{|P_o \cap [-n,n]|}{2n+1}.
\]
The celebrated theorem of Szemer\'edi \cite{Sz}, with the ergodic-theoretical proof of 
Furstenberg \cite[Theorem 11.13]{F}, asserts that for every $P_o \subset \bZ$ with positive upper asymptotic density and for every finite set $F \subset \bZ$, the set 
\[
S_F = \{ \lambda \in \bZ \, : \, \textrm{$\exists \, \lambda_o \in P_o$ such that} \enskip \lambda_o + \lambda \cdot F \subset P_o \}
\]
is syndetic in $\bZ$, i.e. there is a finite set $K \subset \bZ$ such that $S_F + K = \bZ$. In this article we show that a version of Szemer\'edi's Theorem still holds if one replaces the ring $\bZ$ with an \emph{approximate ring}. \\

More precisely, let $R$ be a locally compact ring, e.g. $R = \bR,\bC,\bQ_p$ (for some prime number $p$) or the Hamilton quaternions $\bH$, and suppose $\Lambda \subset R$ is a symmetric, uniformly discrete and multiplicatively closed subset of $R$, which is an approximate subgroup under addition, i.e. it contains $0$ and there is a finite set $K \subset R$ such that $\Lambda + \Lambda \subset \Lambda + K$. Let $\Lambda^\infty \subset R$ denote the additive group generated by $\Lambda$ inside $(R,+)$. The aim of this paper is to provide a partial (affirmative) answer to the following question (see Corollary \ref{cor_ring} below). 

\begin{question}
Let $P_o \subset \Lambda$ be a "large" subset. Given a finite set $F \subset \Lambda^\infty$, define
\[
S_F = \{ \lambda \in \Lambda \, : \, \textrm{$\exists \, \lambda_o \in P_o$ such that} \enskip \lambda_o + \lambda \cdot F \subset P_o \}.
\]
Is $S_F$ a syndetic subset of $\Lambda$ for every $F$? In other words, can we find, for every finite set $F \subset \Lambda^\infty$, a finite subset $K_F \subset R$ such that 
$\Lambda \subseteq S_F + K_F$?
\end{question}

\begin{remark}
Note that we do not require the finite set $F$ to be contained in $\Lambda$, but only in $\Lambda^\infty$, which is typically a \emph{dense} subgroup of $R$.
\end{remark}

The exact definition of "large" will be given below. However, we want to emphasize that 
for the examples we have in mind, the set $\Lambda$ typically has zero upper Banach density inside the group $\Lambda^\infty$, so it is in general very difficult to deduce anything about patterns inside $\Lambda$ from combinatorial arguments within $\Lambda^\infty$. In the next subsection, we provide some explicit classes of examples in $R = \bR$ and $R = \bQ_p$, for which we can answer the question above. We then present the general framework for our investigations, and our main theorems, along with a proof of a recent conjecture by Klick, Strungaru and Tcaciuc \cite{KST}.

\subsection{Some motivating examples}

\subsubsection{Examples from number fields}

To keep things simple, let $D > 0$ be a square-free integer, and define $\Lambda \subset \bR$ by
\[
\Lambda = \{ m + n \sqrt{D} \, : \, |m-n\sqrt{D}| \leq 1 \}.
\]
It is not hard to check that $\Lambda$ is symmetric, uniformly discrete in $\bR$ and multiplicatively closed. Furthermore, 
\[
\Lambda + \Lambda \subset \Lambda + \{-1,0,1\},
\]
so $\Lambda$ is an approximate subgroup of $\bR$ and $\Lambda^\infty = \bZ[\sqrt{D}]$.
We say that a subset $P_o \subset \Lambda$ has \emph{positive upper asymptotic density} 
if 
\[
\varlimsup_{n \ra \infty} \frac{|P_o \cap [-n,n]|}{2n} > 0,
\]
where $[-n,n]$ is viewed as an interval in $\bR$.
The following result is a special case of Corollary \ref{cor_ring} below. 

\begin{theorem}
\label{Thm_R}
Suppose $P_o \subset \Lambda$ has positive upper asymptotic density. Then, for every finite set $F \subset \bZ[\sqrt{D}]$, the set
\[
S_F = \{ \lambda \in \Lambda \, : \, \textrm{$\exists \, \lambda_o \in P_o$ such that} \enskip \lambda_o + \lambda \cdot F \subset P_o \}
\]
is syndetic in $\Lambda$. 
\end{theorem}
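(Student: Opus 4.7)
The plan is to view $\Lambda$ as a cut-and-project set and transport the combinatorial question into a dynamical one. Concretely, let $\sigma \colon \bZ[\sqrt D] \to \bZ[\sqrt D]$ be the Galois involution $m+n\sqrt D \mapsto m - n\sqrt D$, and embed $\bZ[\sqrt D]$ as a cocompact lattice $\Gamma$ in $\bR^2$ via $x \mapsto (x, \sigma(x))$. Taking the window $W = [-1,1]$, the set $\Lambda$ is exactly the projection to the first coordinate of $\Gamma \cap (\bR \times W)$. This realization makes visible both an ambient $\bR$-action on $\bR^2/\Gamma$ and the dense subgroup $\bZ[\sqrt D] = \Lambda^\infty$, which contains the pattern $F$ but is \emph{not} the acting group.

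Second, I would invoke the novel Furstenberg correspondence principle advertised in the introduction. The input is a subset $P_o \subset \Lambda$ of positive upper density; the output should be an ambient-group-invariant Borel probability measure $\mu$ on a compact hull $X$ together with a distinguished measurable set $A \subset X$ of positive $\mu$-measure such that the combinatorial configurations $\lambda_o + \lambda \cdot F \subset P_o$ correspond to simultaneous returns of $A$ under translation by the elements $\lambda f \in \bZ[\sqrt D]$, for $f \in F$. The key feature of the \emph{transverse} formulation is that although these translations are parametrized by elements of the dense subgroup $\Lambda^\infty$, they act on a slice of the hull transverse to the orbit foliation for which return times to $A$ are indexed by $\Lambda$ itself.

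Third, I would apply the transverse multiple recurrence theorem, built on Austin's work, to the commuting family of shifts by $f_1, \dots, f_k$ (the elements of $F$) on the transverse system. Such a theorem should yield a set $S \subset \Lambda$, syndetic in $\Lambda$, such that for every $\lambda \in S$
\[
\mu\!\left( A \cap (A - \lambda f_1) \cap \dots \cap (A - \lambda f_k) \right) > 0.
\]
Translating back through the correspondence then supplies, for each $\lambda \in S$, an element $\lambda_o \in P_o$ with $\lambda_o + \lambda \cdot F \subset P_o$, so $S \subset S_F$ and $S_F$ is syndetic in $\Lambda$.

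The main obstacle, and the reason the classical Furstenberg machinery does not apply directly, is precisely that $\Lambda$ has density zero inside $\Lambda^\infty$ and that $\Lambda^\infty$ is dense in $\bR$: a naive correspondence on the ambient group $\bR$ would yield recurrence times in a syndetic subset of $\bR$, which need not meet $\Lambda$ at all and certainly need not produce syndeticity in $\Lambda$. The transverse formulation is designed to sidestep this by making $\Lambda$ itself the natural set of return times, and identifying the correct hull $X$ and invariant measure so that the transverse return-time set coincides with $S_F$ is what I expect to be the real work — the subsequent recurrence step, once this framework is in place, is what Austin's results deliver.
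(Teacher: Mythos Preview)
Your outline is correct and follows essentially the same route as the paper: realize $\Lambda$ as the cut-and-project set $\Lambda(\bR,\bR,\Gamma,[-1,1])$ via the Galois embedding, apply the transverse correspondence principle (Theorem~\ref{Thm_CorrPrinciple}) to obtain an ergodic $\mu \in \Prob_G(\Omega_{P_o}^{\times})$ with $\mu_{\cT_{P_o}}(\cT_{P_o}) = d^*(P_o) > 0$, then invoke the transverse multiple recurrence theorem (Theorem~\ref{Thm_MainErg}) for the endomorphisms $\alpha_k(g) = g\lambda_k$ with $F = \{\lambda_1,\dots,\lambda_r\}$, and finally pull back via the correspondence inequality to conclude that $S_F$ is syndetic. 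Two small points of precision: the set $A$ you describe is really the cross-section $\cT_{P_o}$ and the relevant measure is the \emph{transverse} measure $\mu_{\cT_{P_o}}$ rather than $\mu$ itself; and the ``commuting shifts by $f_1,\dots,f_k$'' are more accurately the commuting $G$-actions $g \mapsto \alpha_k(g) = g f_k$, so that at time $\lambda$ the $k$-th action translates by $\lambda f_k$ --- your displayed intersection already reflects this correctly.
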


\begin{remark}
The non-emptiness of the set $S_F$ can also be proved using the IP-version of Szemer\'edi's Theorem due to Furstenberg and Katznelson \cite{FK2}. We sketch the alternative proof of Theorem \ref{Thm_R} in the appendix. This proof can also be extended to the more general setting of cut-and-project sets discussed below (see the appendix for more details). However, for locally compact and second countable (lcsc) abelian groups like $\bQ_p$ without \emph{dense finitely generated subgroups}, it seems difficult (but certainly not impossible) to establish syndeticity of sets like $S_F$ using this approach, and we believe that such a proof might provide a way to prove Conjecture \ref{Conj} below. Our proof of syndeticity of $S_F$ instead employs some recent results of Austin \cite{austin}.
\end{remark}

\subsubsection{$p$-adic examples}

Let $p$ be a prime number and define the set $\Lambda_p \subset \bQ_p$ by
\[
\Lambda_p = \{ \gamma \in \bZ[1/p] \, : \, |\gamma|_\infty \leq 1 \},
\]
where $|\cdot|_\infty$ denotes the (real) absolute value when $\bZ[1/p]$ is viewed as a sub-ring of $\bR$. Even though $\bQ_p$ does not admit a lattice, the set $\Lambda_p$ is 
nevertheless an approximate lattice, i.e. it is symmetric, uniformly discrete and relatively dense \emph{approximate subgroup} of $\bQ_p$. Furthermore, $\Lambda_p$ is multiplicatively closed and $\Lambda_p^\infty = \bZ[1/p]$. The set $\Lambda_p$ is sometimes referred to as the \emph{fractional parts} of $\bQ_p$. \\

Note that
the sequence $(F_n)$ defined by $F_n = p^{-n} \bZ_p \subset \bQ_p$ is a F\o lner sequence in $\bQ_p$ and $m_{\bQ_p}(F_n) = p^n$, where $m_{\bQ_p}$ denotes the Haar measure on $\bQ_p$, normalized so that $m_{\bQ_p}(\bZ_p) = 1$. We say that a subset 
$P_o \subset \Lambda_p$ has \emph{positive upper asymptotic density} if 
\[
\varlimsup_{n \ra \infty} \frac{|P_o \cap F_n|}{p^n} > 0.
\]
The following result is a special case of Corollary \ref{cor_ring} below. 

\begin{theorem}
\label{Thm_Qp}
Suppose $P_o \subset \Lambda_p$ has positive upper asymptotic density. Then, for every finite set $F \subset \bZ[1/p]$, the set
\[
S_F = \{ \lambda \in \Lambda \, : \, \textrm{$\exists \, \lambda_o \in P_o$ such that} \enskip \lambda_o + \lambda \cdot F \subset P_o \}
\]
is syndetic in $\Lambda_p$. 
\end{theorem}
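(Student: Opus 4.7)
The plan is to recast Theorem~\ref{Thm_Qp} as a transverse multiple recurrence problem on a compact homogeneous space, via the cut-and-project description of $\Lambda_p$, and then to conclude using Austin's recent multiple ergodic theorems together with the Furstenberg-type correspondence principle for approximate lattices announced in the introduction.

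First I would embed $\bZ[1/p]$ diagonally as a uniform lattice $\Gamma$ in the lcsc abelian group $G = \bR \times \bQ_p$, so that $Y = G/\Gamma$ is the compact $p$-adic solenoid. With the window $W = [-1,1] \subset \bR$,
\[
\Lambda_p = \pi_{\bQ_p}\bigl(\Gamma \cap (W \times \bQ_p)\bigr),
\]
and the image $B \subset Y$ of $W \times \bQ_p$ is the hit region responsible for $\Lambda_p$; the subgroup $\bZ[1/p]$ acts on $Y$ by translation through the $\bQ_p$ coordinate. Given $P_o \subset \Lambda_p$ of positive upper density, the transverse Furstenberg correspondence should furnish an extension $X \to Y$, a $\bZ[1/p]$-invariant probability measure $\mu$ on $X$, and a Borel set $A \subset X$ lying over $B$ with $\mu(A) > 0$ bounded below by the upper density of $P_o$, such that positivity of $\mu\bigl(A \cap \bigcap_{f \in F} T_{\lambda f}^{-1} A\bigr)$ forces $\lambda \in \Lambda_p$ and produces $\lambda_o \in P_o$ with $\lambda_o + \lambda \cdot F \subset P_o$.

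Next I would apply Austin's multiple convergence and positivity theorems for actions of countable amenable groups to the system $(X,\mu,(T_\gamma)_{\gamma \in \bZ[1/p]})$ to obtain a strictly positive $\liminf$ of the F\o lner averages $|F_n|^{-1}\sum_{\lambda \in F_n}\mu\bigl(A \cap \bigcap_f T_{\lambda f}^{-1}A\bigr)$. The compact/weakly-mixing dichotomy standard in the amenable setting then upgrades this to syndeticity, \emph{inside the group} $\bZ[1/p]$, of the return set $R_F = \{\lambda \in \bZ[1/p] : \mu(A \cap \bigcap_f T_{\lambda f}^{-1} A) > 0\}$, which by the previous step is contained in $S_F$.

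The main obstacle is exactly the step flagged in the introduction: syndeticity inside the dense subgroup $\bZ[1/p] \subset \bQ_p$ does not automatically translate to syndeticity inside the discrete approximate lattice $\Lambda_p$, which has zero upper Banach density in $\bZ[1/p]$. Here the transversality built into the correspondence is essential: because $A$ lives over the window $B$, every $\lambda \in R_F$ already lies in $\Lambda_p$, and combining syndeticity of $R_F$ in $\bZ[1/p]$ with unique ergodicity of $\bZ[1/p] \acts Y$ and a covering argument using the window should yield the desired compact set $K_F \subset \bQ_p$ with $\Lambda_p \subseteq S_F + K_F$.
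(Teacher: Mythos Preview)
Your proposal contains a genuine self-contradiction at the crucial step. You claim that Austin's theorem gives syndeticity of $R_F$ inside the countable group $\bZ[1/p]$, and simultaneously that transversality forces $R_F \subset \Lambda_p$. These two statements are incompatible: $\Lambda_p = \{\gamma \in \bZ[1/p] : |\gamma|_\infty \le 1\}$ is bounded in the archimedean direction, so $\Lambda_p + K$ is still bounded for any finite $K \subset \bZ[1/p]$ and can never equal $\bZ[1/p]$. Hence no subset of $\Lambda_p$ is syndetic in $\bZ[1/p]$. The error is in the transversality claim: to invoke Austin you need $\mu(A) > 0$ for the \emph{invariant} measure $\mu$ on $X$, and for such a thickened set the return times $\{\lambda : \mu(A \cap \bigcap_f T_{\lambda f}^{-1}A) > 0\}$ are not constrained to $\Lambda_p$ at all. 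Indeed, for $\lambda \in \bZ[1/p]$ that are $p$-adically small (but archimedeanly enormous) each $T_{\lambda f}$ is close to the identity and the intersection has measure close to $\mu(A)$.

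What the paper does is precisely designed to escape this trap. It works with the \emph{transverse} measure $\mu_{\cT_{P_o}}$ on the canonical cross-section of the hull $\Omega_{P_o}^\times$, where the separation condition $(\Xi_{\cT_{P_o}} - \Lambda^q) \cap V = \{0\}$ genuinely confines multiple returns to $\Lambda$. But $\mu_{\cT_{P_o}}$ is not group-invariant, so Austin cannot be applied to it directly. The bridge is Proposition~\ref{Prop_LowBndAvLambda}: one thickens $B$ to $B_{V_o} = V_o.B$, and then bounds the discrete sum $\sum_{\lambda \in \Lambda \cap F_n}$ from below by a continuous integral $\int_{F_n}$ over the ambient lcsc group $G = \bQ_p$, using the auxiliary hull $\Omega_{\Lambda_o}^\times$ of a set $\Lambda_o$ with $\Lambda_o - \Lambda_o \subset \Lambda$ and the inequality $\sum_{\lambda \in \Lambda} \chi_V(g+\lambda) \ge \nu(\cO_{V_o} \cap g.\cO_{V_o})$ of Lemma~\ref{Lemma_lwbndnu}. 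Only after this conversion does one apply the multiple recurrence theorem (Theorem~\ref{Thm_SyndMultRecG}) for commuting $G$-actions on $(X \times \Omega_{\Lambda_o}^\times, \mu \otimes \nu)$, and syndeticity in $\Lambda$ then follows from Lemma~\ref{Lemma_syndfromav}. Your covering argument in the last paragraph is gesturing at something, but it cannot repair the contradiction above; the missing idea is this passage from transverse sums to ambient-group integrals.
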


\subsection{General framework}

Both of the examples above are special cases of the \emph{cut-and-project construction}, which we now review in general. Let $G$ and $H$ be locally compact 
and second countable (lcsc) abelian groups, and let $\Gamma < G \times H$ be a (uniform) lattice such that the projection of $\Gamma$ to $G$ is injective and the projection of $\Gamma$ to $H$ is dense. Let $W \subset H$ be a bounded and symmetric Borel set with non-empty interior, \emph{containing $0$}, and define the \emph{cut-and-project set} $\Lambda = \Lambda(G,H,\Gamma,W)$ by
\[
\Lambda = \pr_G(\Gamma \cap (G \times W)) \subset G,
\]
where $\pr_G$ denotes the projection $(g,h) \mapsto g$ from $G \times H$ to $G$.
We stress that our assumptions that $W$ is symmetric and its interior contains $0$
are not standard, and in most of the literature, cut-and-project sets are defined without these assumptions. They are not strictly necessary for our theorems, but do simplify their exposition. \\

The examples in the previous sub-section are of this form with
\[
G = H = \bR, \enskip \Gamma = \{ (m+n\sqrt{D},m-n\sqrt{D}) \, : \, m,n \in \bZ\}, \enskip W = [-1,1],
\]
and
\[
G = \bQ_p, \enskip H = \bR, \enskip \Gamma = \{ (\gamma,\gamma) \, : \, \gamma \in \bZ[1/p] \}, \enskip W = [-1,1]
\]
respectively. It is not difficult that every $\Lambda$ of this form is uniformly discrete and relatively dense (i.e. there is a compact set $Q \subset G$ such that $Q + \Lambda = G$). Furthermore, there is a finite $F \subset G$ such that
\[
\Lambda - \Lambda \subset \Lambda + F.
\] 
The study of cut-and-project sets was initiated by Yves Meyer \cite{Meyer}. \\

We say that a uniformly discrete and relatively dense subset $\Lambda \subset G$ is a (uniform) \emph{approximate lattice} if it is symmetric, $0 \in \Lambda$ and there is a finite set $F$ in $G$ such that $\Lambda + \Lambda \subset \Lambda + F$. Note that if $\Lambda$ is an approximate lattice, then for every $q \geq 1$, the $q$-iterated sum 
\[
\Lambda^q = \underbrace{\Lambda + \ldots + \Lambda}
\]
is contained in a finite union of translates of $\Lambda$, and is thus again 
uniformly discrete (and relatively dense) in $G$. Furthermore, every cut-and-project set $\Lambda(G,H,\Gamma,W)$, for a \emph{symmetric} set $W \subset H$ whose interior contains $0$, is an approximate lattice. \\

Fix a Haar measure $m_G$ on $G$. Given a strong F\o lner sequence $(F_n)$ (see Subsection \ref{Subsec:Folner} for definitions), we define the \emph{upper asymptotic density $\overline{d}_{(F_n)}(P_o)$} of a uniformly discrete subset $P_o \subset G$ 
by
\[
\overline{d}_{(F_n)}(P_o) = \varlimsup_{n \ra \infty} \frac{|P_o \cap F_n|}{m_G(F_n)},
\]
and the \emph{upper Banach density $d^*(P_o)$} by
\[
d^*(P_o) = \sup\big\{ \overline{d}_{(F_n)}(P_o) \, : \, \textrm{$(F_n)$ is a strong F\o lner sequence in $G$}\}.
\]
We show below (Subsection \ref{subsec:Banach}) that $d^*(P_o)$ is always finite. 

\subsection{Main combinatorial results}

We can now state our first main result, whose proof will be given in Section \ref{sec:prfcomb}. Let $G$ be a lcsc abelian group, and 
denote by $\End(G)$ the space of continuous endomorphisms on $G$.

\begin{theorem}
\label{Thm_MainComb}
Let $\Lambda \subset G$ be an approximate lattice and suppose there is a set $\Lambda_o \subset \Lambda$ with positive upper Banach density such that $\Lambda_o - \Lambda_o \subset \Lambda$. 
Let $P_o \subset \Lambda$ be a subset with positive Banach upper density. Suppose that there exist $\alpha_1,\ldots,\alpha_r \in \End(G)$ and $q \geq 1$ such that $\alpha_k(\Lambda) \subset \Lambda^q$ for all $k$. Then there exists $c > 0$ such that the set
\[
S = \Big\{ \lambda \in \Lambda \, : \, d^*\Big(P_o \cap \Big( \bigcap_{k=1}^r (P_o - \alpha_k(\lambda)) \Big)\Big) \geq c \Big\}
\]
is syndetic in $\Lambda$. 
\end{theorem}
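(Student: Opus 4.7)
The plan is to recast the combinatorial statement as a multiple recurrence statement for a measure-preserving $G$-action on an auxiliary compact space (the ``hull'' of $\Lambda$), and then invoke a recurrence theorem of Austin. I would begin by forming the hull $X \subset 2^G$ of $\Lambda$, i.e.\ the $G$-orbit closure of $\Lambda$ in the space of closed subsets of $G$ with the Chabauty--Fell topology. Because $\Lambda$ is an approximate lattice, $X$ is a compact metrizable $G$-space. I would encode the subset $P_o \subset \Lambda$ by enlarging $X$ to a compact space $\widetilde{X} \subset 2^G \times 2^G$ of pairs $(P,\Lambda')$ containing $(P_o, \Lambda)$, and then take a strong F\o lner sequence $(F_n)$ witnessing $d^*(P_o) > 0$ to produce, via weak-$*$ compactness, a $G$-invariant probability measure $\mu$ on $\widetilde{X}$.

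Next I would build the transverse structure. Let $\cT \subset \widetilde{X}$ be the clopen cross-section of pairs $(P,\Lambda')$ with $0 \in P$. A Palm-type normalization of $\mu$ supported on $\cT$ produces a positive transverse measure $\nu$ such that $B := \cT$ has positive $\nu$-mass, and the correspondence satisfies an inequality of the form
\[
d^*\Big( P_o \cap \bigcap_{k=1}^r (P_o - g_k) \Big) \; \geq \; \nu\Big( B \cap \bigcap_{k=1}^r g_k \cdot B \Big)
\]
for all $g_1,\dots,g_r \in G$, where $g \cdot B$ denotes the image of $B$ under the $G$-translation on $\widetilde{X}$. The auxiliary subset $\Lambda_o \subset \Lambda$ with $\Lambda_o - \Lambda_o \subset \Lambda$ is used here to ensure that the cross-section is suitably ``fat''; its positive Banach density provides the lower bound on the transverse mass of $B$ needed to make the correspondence quantitative. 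The hypothesis $\alpha_k(\Lambda) \subset \Lambda^q$ enters at this stage to guarantee that each $\alpha_k(\lambda)$ is a legitimate translation vector whose action on $\widetilde{X}$ respects the discrete transverse structure.

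Having set up the correspondence, I would apply a \emph{transverse multiple recurrence theorem} — deduced via the above correspondence from recent results of Austin on non-conventional averages for commuting $G$-actions — to the pmp system $(\widetilde{X}, \mu, G)$, the tuple of translations $(T_{\alpha_1(g)}, \dots, T_{\alpha_r(g)})$ (which commute because $G$ is abelian), and the transverse set $B$. This yields a constant $c = c(\nu, B) > 0$ such that
\[
\Big\{ g \in G \, : \, \nu\big(B \cap \alpha_1(g)\cdot B \cap \cdots \cap \alpha_r(g)\cdot B\big) \geq c \Big\}
\]
is syndetic in $G$. Combining with the correspondence inequality above and intersecting with $\Lambda$ (using the relative denseness of $\Lambda$ in $G$, so that a syndetic subset of $G$ traces to a syndetic subset of $\Lambda$) would produce the set $S$ with the required lower bound $c$.

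The main obstacle, and the ``novel'' ingredient flagged in the abstract, is the construction and analysis of the transverse correspondence principle itself. The standard Furstenberg correspondence is insensitive to the combinatorics inside $\Lambda$ because $\Lambda$ has zero Haar density in $G$; one must therefore replace the ambient measure by a Palm measure on a cross-section of the hull, and verify quantitatively that the transverse mass of $B$ controls $d^*(P_o)$ from below in a way that depends only on $\Lambda$ (and not on $P_o$). This is precisely where the assumption that there is a difference-closed subset $\Lambda_o \subset \Lambda$ of positive Banach density becomes essential. A secondary technical issue is to put Austin's recurrence results into the exact form needed for commuting endomorphisms of a general lcsc abelian group $G$, rather than just for $\bZ^d$-actions; the compatibility $\alpha_k(\Lambda) \subset \Lambda^q$ is what allows one to pass back from a syndetic recurrence set in $G$ to a syndetic subset of $\Lambda$.
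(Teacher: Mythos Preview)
Your overall architecture is correct: a transverse correspondence principle on the hull, followed by a transverse multiple recurrence theorem derived from Austin, is exactly what the paper does. But two connected steps are genuinely wrong, and they concern precisely the point you flag as the ``novel ingredient''.

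First, you misidentify the role of $\Lambda_o$. In the paper the correspondence principle (Theorem~\ref{Thm_CorrPrinciple}) is proved for $P_o$ alone; positivity of the transverse mass $\mu_{\cT_{P_o}}(\cT_{P_o})$ comes directly from $d^*(P_o)>0$ via the equality $d^*(P_o)=\mu_{\cT_{P_o}}(\cT_{P_o})$, and $\Lambda_o$ does not appear there at all. The hypothesis $\alpha_k(\Lambda)\subset\Lambda^q$ is used at this stage, but only to guarantee that the set $\overline{P_o-P_o}-\Lambda^q\subset\Lambda^{q+2}$ is uniformly discrete, so that the separation condition $(\Xi_{\cT_{P_o}}-\Delta)\cap W=\{0\}$ of the correspondence principle is met.

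Second, and more seriously, your final step ``intersecting with $\Lambda$ (using the relative denseness of $\Lambda$ in $G$, so that a syndetic subset of $G$ traces to a syndetic subset of $\Lambda$)'' is false. A syndetic subset of $G$ need not meet $\Lambda$ at all: already for $G=\bR$, $\Lambda=\bZ$, the set $\bR\setminus\bZ$ is syndetic in $\bR$ but disjoint from $\bZ$. Austin's theorem (Theorem~\ref{Thm_SyndMultRecG}) yields syndeticity in $G$ of a level set for the \emph{ambient} measure $\mu$, not for the transverse measure; there is no direct route from this to syndeticity in $\Lambda$. The paper instead proves directly that for every strong F{\o}lner sequence $(F_n)$,
\[
\varliminf_{n}\frac{1}{m_G(F_n)}\sum_{\lambda\in\Lambda\cap F_n}\mu_Y\Big(\bigcap_{k=1}^r(-\alpha_k(\lambda)).B\Big)>0,
\]
and then applies Lemma~\ref{Lemma_syndfromav}. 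The set $\Lambda_o$ enters \emph{here}, in Proposition~\ref{Prop_LowBndAvLambda}: one passes to the product system $(X\times\Omega_{\Lambda_o}^\times,\mu\otimes\nu)$, and the crucial inequality $\sum_{\lambda\in\Lambda}\chi_{V}(g+\lambda)\geq\nu(\cO_{V_o}\cap g.\cO_{V_o})$ of Lemma~\ref{Lemma_lwbndnu} (which uses $\Lambda_o-\Lambda_o\subset\Lambda$) converts the discrete sum over $\Lambda\cap F_n$ into an integral over $F_n$ of a multiple-recurrence function for the product system. Austin is then applied to the product, not to $X$ alone. This conversion is the actual content of the hypothesis on $\Lambda_o$, and your sketch does not contain it.
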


\begin{remark}
\label{Rmk_NoPwrs}
If $\Lambda$ is an approximate lattice in $G$ which is also a cut-and-project set of the form $\Lambda(G,H,\Gamma,W)$ for some \emph{symmetric} set $0 \in W^o \subset W \subset H$ then we can 
choose $\Lambda_o = \Lambda(G,H,\Gamma,W_o)$ in Theorem \ref{Thm_MainComb}, where $W_o$ is an identity neighbourhood in $H$ such that $W_o - W_o \subset W^o$. Note that in this case, 
$\Lambda_o$ is relatively dense. However, there are examples of approximate lattices (already for $G = \bR$) which do not contain the difference set of any relatively dense set
\cite[Example, Subsection 3.1]{BH2}. These examples do however contain the difference set of a set of positive upper Banach density, so the theorem above is applicable. We
further stress that these examples do \emph{not} contain any higher order difference sets, i.e. they do not contain sets of the form $\Lambda_o - \Lambda_o + \Lambda_o - \Lambda_o$, for some subset $\Lambda_o$ of positive upper Banach density.
\end{remark}

If $G = (R,+)$ for some locally compact ring $R$, we can deduce the following corollary 
from Theorem \ref{Thm_MainComb}.

\begin{corollary}
\label{cor_ring}
Let $R$ be a locally compact and second countable ring, and let $\Lambda \subset R$
be a multiplicatively closed cut-and-project set. Suppose $P_o \subset \Lambda$
has positive upper Banach density in $(R,+)$. Then, for every finite set $F \subset \Lambda^\infty$, the set
\[
S_o = \{ \lambda \in \Lambda \, : \lambda_o + \lambda \cdot F \subset P_o \enskip \textrm{for some $\lambda_o \in P_o$} \}
\]
is syndetic in $\Lambda$.
\end{corollary}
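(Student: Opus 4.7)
The plan is to deduce Corollary \ref{cor_ring} as a direct application of Theorem \ref{Thm_MainComb}. The starting observation is that $\lambda \in S_o$ precisely when there exists $\lambda_o \in P_o$ with $\lambda_o + \lambda \cdot f \in P_o$ for every $f \in F$, i.e.\ when
\[
P_o \cap \bigcap_{f \in F} (P_o - \lambda \cdot f) \neq \emptyset.
\]
Thus it suffices to show that this intersection has positive upper Banach density for every $\lambda$ in some syndetic subset of $\Lambda$, since supersets of syndetic sets are syndetic.

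Enumerate $F = \{f_1,\ldots,f_r\}$, and for each $k$ define $\alpha_k \colon (R,+) \to (R,+)$ by $\alpha_k(x) = x \cdot f_k$. Since multiplication in the locally compact ring $R$ is continuous, each $\alpha_k$ is a continuous additive endomorphism, hence lies in $\End(G)$ for $G = (R,+)$. The candidate $\Lambda_o$ in Theorem \ref{Thm_MainComb} is supplied by Remark \ref{Rmk_NoPwrs}: writing $\Lambda = \Lambda(G,H,\Gamma,W)$ with symmetric $W$ and $0 \in W^o$, we choose an identity neighbourhood $W_o \subset H$ with $W_o - W_o \subset W^o$ and set $\Lambda_o = \Lambda(G,H,\Gamma,W_o)$; this is a relatively dense cut-and-project set with $\Lambda_o - \Lambda_o \subset \Lambda$, so in particular has positive upper Banach density.

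The one technical point is verifying $\alpha_k(\Lambda) \subset \Lambda^q$ for a common $q$. Since $\Lambda$ is symmetric and contains $0$, the additive group $\Lambda^\infty$ equals $\bigcup_{m \geq 1} \Lambda^m$. Hence for each $f_k \in F \subset \Lambda^\infty$, we may pick $m_k \geq 1$ and elements $\mu^{(k)}_1,\ldots,\mu^{(k)}_{m_k} \in \Lambda$ with $f_k = \sum_j \mu^{(k)}_j$. Multiplicative closedness of $\Lambda$ then gives, for any $\lambda \in \Lambda$,
\[
\alpha_k(\lambda) \;=\; \lambda \cdot f_k \;=\; \sum_{j=1}^{m_k} \lambda\,\mu^{(k)}_j \;\in\; \Lambda^{m_k}.
\]
Setting $q = \max_k m_k$ and noting that $0 \in \Lambda$ implies $\Lambda^{m_k} \subset \Lambda^q$, we obtain $\alpha_k(\Lambda) \subset \Lambda^q$ for all $k$.

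With all hypotheses verified, Theorem \ref{Thm_MainComb} delivers a constant $c > 0$ and a syndetic set $S \subset \Lambda$ on which the intersection displayed in the first paragraph has upper Banach density at least $c$; in particular it is non-empty, so $S \subset S_o$, proving syndeticity of $S_o$. The only substantive step is the reduction of the condition "$F \subset \Lambda^\infty$" to the uniform inclusion $\alpha_k(\Lambda) \subset \Lambda^q$, and this is precisely where the combination of \emph{symmetric} generation of $\Lambda^\infty$ by $\Lambda$ and \emph{multiplicative closedness} of $\Lambda$ is essential; without multiplicative closure there would be no reason for $\lambda \cdot \mu$ to lie in $\Lambda$, and without symmetry one could not write a general element of $\Lambda^\infty$ as an iterated sum rather than an iterated difference.
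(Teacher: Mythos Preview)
Your proof is correct and follows essentially the same approach as the paper: reduce to Theorem \ref{Thm_MainComb} by taking $\alpha_k(x) = x \cdot f_k$, supply $\Lambda_o$ via the cut-and-project structure (exactly as in Remark \ref{Rmk_NoPwrs}), and verify $\alpha_k(\Lambda) \subset \Lambda^q$ using multiplicative closure together with $F \subset \Lambda^q$. The paper phrases the last step slightly more compactly as $\alpha_k(\Lambda) \subset \Lambda \cdot \Lambda^q \subset \Lambda^q$, but this is the same computation you spell out elementwise.
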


\begin{proof}
Let $\Lambda = \Lambda(R,H,\Gamma,W) \subset R$ for some $H,\Gamma$ and $W$, and suppose $\Lambda$ is multiplicatively closed, i.e. $\Lambda \cdot \Lambda \subset \Lambda$.  Since we assume that the set $W$ has non-empty interior and contains $0$, we can find 
a symmetric and open set $W_o \subset W$ which contains $0$ such that $W_o - W_o \subset W$. If we define $\Lambda_o = \Lambda_o(R,H,\Gamma,W_o)$, then $\Lambda_o \subset \Lambda$ is an approximate lattice in $R$ (and thus has positive upper Banach density) and
\[
\Lambda_o - \Lambda_o \subset \Lambda.
\]
Let $F = \{\lambda_1,\ldots,\lambda_r\} \subset \Lambda^\infty$ be a finite set, and let 
$q \geq 1$ denote the smallest integer such that $F \subset \Lambda^q$. 
Define $\alpha_1\ldots,\alpha_r \in \End(R)$ by $\alpha_k(g) = g \lambda_k$ for $k=1,\ldots,r$. Then, since $\Lambda \cdot \Lambda \subset \Lambda$, we have
\[
\alpha_k(\Lambda) \subset \Lambda \cdot \Lambda^q \subset \Lambda^q, \quad \textrm{for all $k=1,\ldots,r$}.
\]
Hence all conditions of Theorem \ref{Thm_MainComb} are satisfied, and there is a constant $c > 0$ such that
\[
S = \Big\{ \lambda \in \Lambda \, : \, d^*\Big( P_o \cap \Big( \bigcap_{k=1}^r \big(P_o - \lambda \lambda_k \big) \Big) \Big) \geq c \}
\]
is syndetic in $\Lambda$. Clearly, $S \subset S_o$, so we are done.
\end{proof}

\subsection{Solution to a recent conjecture of Klick, Strungaru and Tcaciuc}

Theorem \ref{Thm_MainComb} also provides a solution to the following conjecture, recently made by Klick, Strungaru and Tcaciuc \cite[Conjecture 6.7]{KST}. Recall
that a subset $\{\lambda_1,\ldots,\lambda_r\}$ in a lcsc group $G$ is an 
\emph{arithmetic progression of length $r$} if $\lambda = \lambda_{k+1} - \lambda_k \neq 0$ for all $k=1,\ldots,r-1$, i.e. if $\lambda_k = \lambda_o + k \cdot \lambda$
for all $k=1,\ldots,r$, for some $\lambda_o$ and $\lambda \in G$. We refer to 
$\lambda$ as the \emph{gap} of the arithmetic progression.

\begin{conjecture}
Let $G$ be a lcsc group and let $\Lambda \subset G$ be a cut-and-project set.
Then every subset $P_o \subset \Lambda$ with positive upper Banach density 
contains arbitrary long arithmetic progressions.
\end{conjecture}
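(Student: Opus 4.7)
The conjecture reduces to a direct application of Theorem~\ref{Thm_MainComb} once one takes the endomorphisms $\alpha_k$ to be multiplication by small integers. Fix $r \geq 2$ and realise the cut-and-project set as $\Lambda = \Lambda(G,H,\Gamma,W)$ with $W$ symmetric and $0 \in W^o$. Following Remark~\ref{Rmk_NoPwrs}, I pick a symmetric identity neighbourhood $W_o \subset H$ with $W_o - W_o \subset W^o$ and set $\Lambda_o = \Lambda(G,H,\Gamma,W_o)$. Then $\Lambda_o$ is itself an approximate lattice (hence has positive upper Banach density) and satisfies $\Lambda_o - \Lambda_o \subset \Lambda$, supplying the difference-set hypothesis of Theorem~\ref{Thm_MainComb}.

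For each $k = 1, \ldots, r-1$ I take $\alpha_k \in \End(G)$ to be $\alpha_k(g) = kg$. Writing $kg$ as an iterated sum of $k$ copies of $g$, one has $\alpha_k(\Lambda) \subset \Lambda^k \subset \Lambda^{r-1}$ (the second inclusion uses $0 \in \Lambda$), so the endomorphism hypothesis is satisfied with $q = r-1$. The theorem then yields a constant $c > 0$ such that
\[
S = \Big\{ \lambda \in \Lambda \, : \, d^*\Big(P_o \cap \bigcap_{k=1}^{r-1}(P_o - k\lambda)\Big) \geq c \Big\}
\]
is syndetic in $\Lambda$.

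To finish, I use that $\Lambda$ is infinite: $P_o \subset \Lambda$ having positive Banach density forces $G$ to be non-compact, and a relatively dense uniformly discrete subset of a non-compact lcsc group is infinite. Hence any syndetic subset of $\Lambda$ is infinite, and I can pick $\lambda \in S$ with $\lambda \neq 0$. For that $\lambda$, the intersection $P_o \cap \bigcap_{k=1}^{r-1}(P_o - k\lambda)$ has positive Banach density and is therefore non-empty; any $\lambda_o$ in it furnishes the arithmetic progression $\lambda_o, \lambda_o + \lambda, \ldots, \lambda_o + (r-1)\lambda$ of length $r$ in $P_o$ with non-zero gap. The substantive content of the argument is of course packed entirely into Theorem~\ref{Thm_MainComb}; what this deduction exposes is the structural point that the dilations $g \mapsto kg$ land in iterated sumsets $\Lambda^q$, which is precisely what the multiple recurrence framework demands, and the only conceptual caveat --- selecting $\lambda \neq 0$ --- is automatic from syndeticity.
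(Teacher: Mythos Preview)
Your proof is correct and takes essentially the same approach as the paper's: both apply Theorem~\ref{Thm_MainComb} with the endomorphisms $\alpha_k(g) = kg$, invoke Remark~\ref{Rmk_NoPwrs} (implicitly in the paper's case) to supply the $\Lambda_o$-hypothesis for cut-and-project sets, and read off arithmetic progressions from the resulting syndetic set. One small correction: positive upper Banach density of $P_o$ does \emph{not} by itself force $G$ to be non-compact (a finite non-empty set in a compact $G$ still has positive density under the paper's definition), so your justification for selecting $\lambda \neq 0$ is not quite right --- but the paper's own proof does not address this point at all, so your argument is in fact more careful than the original, and the issue is a degenerate edge case outside the intended scope of the conjecture.
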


Let us now briefly show how Theorem \ref{Thm_MainComb} establishes the following strengthening of this conjecture. 

\begin{theorem}
Let $G$ be a lcsc group, let $\Lambda \subset G$ be a cut-and-project set and 
suppose that $P_o \subset \Lambda$ has positive upper Banach density. Then, for
every $r \geq 1$, there exist a syndetic set $S_r \subset \Lambda$ with the property
that for every $\lambda \in S_r$, there is a set $P_\lambda \subset P_o$ with positive upper Banach density such that
\[
\lambda_o + \{\lambda,2 \lambda,\ldots,r\lambda\} \subset P_o, \quad \textrm{for all $\lambda_o \in P_\lambda$}.
\]
In particular, $P_o$ contains
arbitrary long arithmetic progressions, and the set of all possible gaps of these 
arithmetic progressions contains a syndetic subset of $\Lambda$.
\end{theorem}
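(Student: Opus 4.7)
The plan is to deduce the theorem directly from Theorem \ref{Thm_MainComb} applied to the dilation endomorphisms $\alpha_k \in \End(G)$ defined by $\alpha_k(g) = kg$ for $k = 1,\ldots,r$. Each $\alpha_k$ is a continuous endomorphism of the abelian group $G$, and for any $\lambda \in \Lambda$ the set
\[
P_o \cap \bigcap_{k=1}^r (P_o - \alpha_k(\lambda)) = \{\lambda_o \in P_o \,:\, \lambda_o + k\lambda \in P_o \text{ for all } 1 \le k \le r\}
\]
is precisely the collection of starting points of arithmetic progressions of length $r+1$ and gap $\lambda$ contained in $P_o$. Thus the syndetic set and the positive density constant provided by Theorem \ref{Thm_MainComb} will translate immediately into the desired $S_r$ and $P_\lambda$.

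Before invoking Theorem \ref{Thm_MainComb}, I would verify its two structural hypotheses on $\Lambda$. The existence of $\Lambda_o \subset \Lambda$ of positive upper Banach density with $\Lambda_o - \Lambda_o \subset \Lambda$ is furnished by Remark \ref{Rmk_NoPwrs}: writing $\Lambda = \Lambda(G,H,\Gamma,W)$ with $W$ symmetric and $0 \in W^o$, I would pick a symmetric identity neighbourhood $W_o \subset H$ with $W_o - W_o \subset W^o$, and set $\Lambda_o = \Lambda(G,H,\Gamma,W_o)$; by the remark, this $\Lambda_o$ is itself an approximate lattice (in particular relatively dense, hence of positive upper Banach density). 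The uniformity hypothesis $\alpha_k(\Lambda) \subset \Lambda^q$ holds with $q = r$, since $\Lambda$ is symmetric and contains $0$, so $k \cdot \Lambda = \Lambda + \cdots + \Lambda \subset \Lambda^k \subset \Lambda^r$ for every $1 \le k \le r$.

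Applying Theorem \ref{Thm_MainComb} then produces a constant $c > 0$ and a syndetic subset $S_r \subset \Lambda$ such that $d^*(P_\lambda) \ge c$ for every $\lambda \in S_r$, where
\[
P_\lambda := P_o \cap \bigcap_{k=1}^r (P_o - k\lambda).
\]
This is the first assertion of the theorem. The ``in particular'' statement about arbitrarily long arithmetic progressions in $P_o$, with a syndetic set of allowable gaps, follows at once by letting $r$ vary over $\bN$.

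I do not foresee any genuine obstacle: the argument is a clean specialization of Theorem \ref{Thm_MainComb} to integer dilations, and the only verifications needed, namely that $g \mapsto kg$ is a continuous endomorphism of $G$ sending $\Lambda$ into a fixed iterated sumset, are routine consequences of the abelian structure and the approximate group axioms. All of the substantive ergodic-theoretic work, in particular the Furstenberg-type correspondence principle and Austin's transverse multiple recurrence theorem, has already been absorbed into the statement of Theorem \ref{Thm_MainComb}.
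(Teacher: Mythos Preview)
Your proposal is correct and follows essentially the same route as the paper: both proofs apply Theorem \ref{Thm_MainComb} to the dilation endomorphisms $\alpha_k(g)=kg$, observe that $\alpha_k(\Lambda)\subset\Lambda^r$ for $k\le r$, and then read off $S_r$ and $P_\lambda = P_o\cap\bigcap_{k=1}^r(P_o-k\lambda)$. The only cosmetic slip is the equality sign in ``$k\cdot\Lambda=\Lambda+\cdots+\Lambda$'': you want $k\cdot\Lambda\subset\Lambda+\cdots+\Lambda=\Lambda^k$, but the intended inclusion is clear and the argument is unaffected.
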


\begin{proof}
Let $\Lambda \subset G$ be a cut-and-project set and let $P_o \subset \Lambda$ be a
set of positive upper Banach density. We may without loss of generality assume that $0 \in \Lambda$. Fix $r \geq 1$, and define $\alpha_1,\ldots,\alpha_r \in \End(G)$ by
\[
\alpha_k(g) = k \cdot g, \quad \textrm{for $k=1,\ldots,r$}.
\]
Then, since $\Lambda  \subset \Lambda^2 \subset \ldots \subset \Lambda^r$, we have $\alpha_k(\Lambda) \subset \Lambda^{r}$ for all $k=1,\ldots,r$, so the conditions of 
Theorem \ref{Thm_MainComb} are satisfied with $q=r$, so we conclude that there exists a syndetic set $S_r \subset \Lambda$ with the property that for every $\lambda \in S_r$, the set
\[
P_\lambda = P_o \cap \Big( \bigcap_{k=1}^r (P_o - k \lambda) \Big) \subset P_o
\]
has positive upper Banach density. 
\end{proof}

\subsection{Main dynamical result}

In Section \ref{subsec:corr} we establish a version of Furstenberg's famous Correspondence Principle for uniformly discrete subsets of a lcsc abelian group $G$.
Using this Correspondence Principle, it is not hard to deduce Theorem \ref{Thm_MainComb} from the following dynamical statement, whose proof will be given in Section \ref{sec:prfdyn}. \\

Let $(X,\mathscr{B}_X)$ be a standard Borel space, equipped with a Borel measurable $G$-action $G \times X \ra X, \enskip (g,x) \mapsto g.x$. We say that a Borel set $Y \subset X$ is a \emph{separated cross-section} if $G.Y = X$ and $\Xi_Y \cap U = \{0\}$, for some open identity neighbourhood $U$ in $G$, where $\Xi_Y$ denotes the set of return times, defined by
\[
\Xi_Y = \{ g \in G \, : \, g.Y \cap Y \neq \emptyset \} \subset G.
\]
For every $\mu \in \Prob_G(X)$, there exists a unique non-negative Borel measure 
$\mu_Y$ on $Y$ (called the \emph{transverse measure} associated to $\mu$) such that 
$\mu(V.B) = m_G(V) \cdot \mu_Y(B)$ for every Borel set $V \subset G$ such that $V-V \subset U$ and for every Borel set $B \subset Y$. It is well-known that a separated cross-section always exists. We refer to Subsection \ref{subsec:cross} for more details.

\begin{theorem}[Multiple recurrence for cross-sections]
\label{Thm_MainErg}
Let $\Lambda \subset G$ be an approximate lattice and suppose that there exists a set $\Lambda_o \subset \Lambda$ with positive upper Banach density such that $\Lambda_o - \Lambda_o \subset \Lambda$. Let $\alpha_1,\ldots,\alpha_r \in \End(G)$ and suppose that there is
$q \geq 1$ such that $\alpha_k(\Lambda) \subset \Lambda^q$ for all $k$. If there 
is an open identity neighbourhood $V \subset U$
such that $(\Xi_Y - \Lambda^q) \cap V = \{0\}$, then for every
$\mu \in \Prob_G(X)$ and for every Borel set $B \subset Y$
with positive $\mu_Y$-measure, there exists $c > 0$ such that the set
\[
S = \Big\{ \lambda \in \Lambda \, : \, \mu_Y\Big( \bigcap_{k=1}^r (-\alpha_k(\lambda)).B\Big) \geq c \Big\}
\]
is syndetic in $\Lambda$.
\end{theorem}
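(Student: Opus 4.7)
The plan is to reduce the transverse multiple recurrence statement on $(Y,\mu_Y)$ to a classical multiple recurrence statement on $(X,\mu)$ by thickening $B$, apply Austin's multiple recurrence theorem \cite{austin} as a black box, and translate the conclusion back via a sharp slicing identity. First, by adjoining $\alpha_0 = 0$ to the list (the enlarged intersection is contained in the original, so any lower bound on $\mu_Y$ transfers back to the original configuration), we may assume without loss of generality that $0 \in \{\alpha_1,\ldots,\alpha_r\}$. Choose a symmetric open identity neighborhood $W$ with $W - W \subset V$ and set $\tilde{B} := W.B \subset X$. Since $W - W \subset V \subset U$, the transverse disintegration yields $\mu(\tilde{B}) = m_G(W) \cdot \mu_Y(B) > 0$.

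The heart of the argument is the slicing identity
\[
\mu\Big(\bigcap_{k=1}^r (-\alpha_k(\lambda)).\tilde{B}\Big) \;=\; m_G(W)\cdot \mu_Y\Big(\bigcap_{k=1}^r (-\alpha_k(\lambda)).B\Big), \qquad \lambda \in \Lambda.
\]
To establish it, for $y \in Y$ and $w \in W$ the condition $w.y \in (-\alpha_k(\lambda)).\tilde{B}$ unpacks to $(\alpha_k(\lambda)+w-w').y \in B$ with $\alpha_k(\lambda)+w-w' \in \Xi_y$ for some $w' \in W$. Since $\alpha_k(\lambda) \in \Lambda^q$ and $w - w' \in W - W \subset V$, the hypothesis $(\Xi_Y - \Lambda^q) \cap V = \{0\}$ forces $w = w'$, so the condition collapses to $\alpha_k(\lambda).y \in B$, i.e.\ $y \in (-\alpha_k(\lambda)).B$. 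The converse is immediate: if $\alpha_k(\lambda).y \in B$ then $(\alpha_k(\lambda)+w).y = w.(\alpha_k(\lambda).y) \in W.B = \tilde{B}$. Because $0 \in \{\alpha_j\}$, the full intersection lies inside $\tilde{B} \subset W.Y$, and the transverse disintegration $\mu|_{W.Y} = m_G|_W \otimes \mu_Y$ then delivers the identity.

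Finally, applying Austin's multiple recurrence theorem to the $G$-system $(X,\mu)$, the set $\tilde{B}$ of positive measure, and the endomorphism configuration $\{\alpha_1,\ldots,\alpha_r\}$ produces a constant $c' > 0$ and a syndetic set $S \subset \Lambda$ with $\mu(\bigcap_k (-\alpha_k(\lambda)).\tilde{B}) \geq c'$ on $S$; combined with the slicing identity this gives $\mu_Y(\bigcap_k (-\alpha_k(\lambda)).B) \geq c'/m_G(W) =: c$ on $S$, as required. The main obstacle lies precisely in this last step: producing a syndetic set \emph{in $\Lambda$} rather than merely positive density in $G$ or $\Lambda_o$. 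Austin's theorems naturally yield positive-density or IP-syndetic sets of return times, and it is the combined hypothesis that $\Lambda$ is an approximate lattice together with $\Lambda_o - \Lambda_o \subset \Lambda$ for some $\Lambda_o \subset \Lambda$ of positive upper Banach density that enables one to upgrade this to genuine syndeticity in $\Lambda$, via a combinatorial argument exploiting the approximate-subgroup structure of $\Lambda$ (and the fact that difference sets of positive-density subsets of $\Lambda_o$ land in $\Lambda$).
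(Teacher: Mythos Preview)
Your slicing identity is correct and coincides with the paper's Lemma~2.3; the reduction from $\mu_Y$ to $\mu$ via thickening is exactly what the paper does. The difficulty you flag in your last paragraph is, however, the entire content of the theorem, and your proposal does not address it. Austin's theorem (or its lcsc extension, Theorem~4.1 in the paper) yields a syndetic set of return times in $G$, or equivalently positive Ces\`aro averages over F\o lner sets in $G$. There is no ``combinatorial argument exploiting the approximate-subgroup structure'' that upgrades this to syndeticity in $\Lambda$: a syndetic open set in $G$ can miss $\Lambda$ entirely, and the function $g\mapsto \mu(\bigcap_k(-\alpha_k(g)).\tilde B)$, while uniformly continuous, carries no a priori information on its restriction to the measure-zero set $\Lambda$.

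What the paper actually does is encode $\Lambda$ \emph{dynamically}. One passes to the product $Z=X\times\Omega_{\Lambda_o}^\times$ with $\theta=\mu\otimes\nu$ for some $\nu\in\Prob_G(\Omega_{\Lambda_o}^\times)$, and defines commuting actions $a_1(g,(x,\Lambda'))=(-\alpha_1(g).x,\,g.\Lambda')$ and $a_k(g,(x,\Lambda'))=(-\alpha_k(g).x,\,\Lambda')$ for $k\ge 2$. Applying Austin to $A_o=B_{V_1}\times\cO_{V_2}$ gives, for every strong F\o lner sequence,
\[
\varliminf_n \frac{1}{m_G(F_n)}\int_{F_n}\theta\Big(A_o\cap\bigcap_k a_k(g,A_o)\Big)\,dm_G(g)>0.
\]
The factor $\nu(\cO_{V_2}\cap g.\cO_{V_2})$ appearing in the integrand is the crucial device: by Lemma~2.6 it is dominated by $\sum_{\lambda\in\Lambda}\chi_{V_1}(g+\lambda)$, so after a smoothing argument (using continuity of the $\alpha_k$ to absorb small perturbations) the integral over $F_n$ is bounded above by the discrete sum $\sum_{\lambda\in\Lambda\cap F_n}\mu_Y(\bigcap_k(-\alpha_k(\lambda)).B)$. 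This gives positive lower density of the latter along every strong F\o lner sequence, and then Lemma~2.8 converts positive lower density to syndeticity in $\Lambda$. The role of $\Lambda_o$ is thus not combinatorial but measure-theoretic: its hull furnishes the extra factor that localizes the continuous recurrence to a neighborhood of $\Lambda$.
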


\subsection{Potential polynomial extensions}

A series of impressive extensions of Szemer\'edi's Theorem have been proved over the years. Polynomial versions have been particularly popular, starting with the ground-breaking works of Bergelson, Leibman and McCutcheon \cite{BL1, BL2,BM}. In view of these works, specifically \cite[Theorem 0.3]{BM}, we feel that it is natural to formulate the following conjectural polynomial strengthening of Corollary \ref{cor_ring}. If $R$ is a locally compact ring, a 
\emph{polynomial} is a map $p : R \ra R$ of the form
\[
p(x) = \sum_{k=0}^r a_k x^k, \quad \textrm{for some $a_o, a_1,\ldots,a_r \in R$}.
\]

\begin{conjecture}
\label{Conj}
Let $R$ be a locally compact and second countable ring and let $\Lambda \subset R$
be a multiplicatively closed cut-and-project set. Suppose that $P_o \subset \Lambda$ has
positive upper Banach density in $(R,+)$. Then, for every $r \geq 1$, for all polynomials $p_1,\ldots,p_r : R \ra R$ such that $p_k(0) = 0$ and 
$p_k(\Lambda) \subset \Lambda^q$ for some $q \geq 1$ and for all $k$, and 
for every finite set $F \subset \Lambda^\infty$, the set
\[
S_F = \{ \lambda \in \Lambda \, : \, \textrm{$\exists \, \lambda_o \in P_o$ such that}\enskip \lambda_o + \{p_1(\lambda),\ldots,p_r(\lambda)\} \subset P_o \}
\]
is syndetic.
\end{conjecture}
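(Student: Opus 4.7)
The plan is to mimic the architecture by which Corollary \ref{cor_ring} was derived from Theorem \ref{Thm_MainErg}, with the underlying dynamical input strengthened from linear to polynomial iterates. I would formulate and attempt to prove a \emph{polynomial transverse multiple recurrence theorem}: under the hypotheses of Theorem \ref{Thm_MainErg} on $\Lambda, \Lambda_o$ and the neighbourhoods $U, V$, but with each $\alpha_k \in \End(G)$ replaced by a continuous polynomial map $p_k : R \to R$ satisfying $p_k(0) = 0$ and $p_k(\Lambda) \subset \Lambda^q$, the set
\[
S = \Big\{ \lambda \in \Lambda \, : \, \mu_Y\Big( \bigcap_{k=1}^r (-p_k(\lambda)).B\Big) \geq c \Big\}
\]
should be syndetic in $\Lambda$ for some constant $c = c(\mu, B) > 0$, whenever $\mu \in \Prob_G(X)$ and $B \subset Y$ is a Borel set with $\mu_Y(B) > 0$.

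Granted such a theorem, the deduction of Conjecture \ref{Conj} follows the proof of Corollary \ref{cor_ring} essentially verbatim. Given a finite set $F \subset \Lambda^\infty$ and polynomials $p_1,\ldots,p_r : R \to R$ with $p_k(0) = 0$ and $p_k(\Lambda) \subset \Lambda^{q_0}$, enlarge $q_0$ to some $q \geq 1$ so that $F \cup \bigcup_k p_k(\Lambda) \subset \Lambda^q$. Use the cut-and-project structure to produce, as in Remark \ref{Rmk_NoPwrs}, a set $\Lambda_o \subset \Lambda$ of positive upper Banach density with $\Lambda_o - \Lambda_o \subset \Lambda$; pass from $P_o$ to a dynamical system $(X,\mu,Y,B)$ via the Furstenberg-type correspondence principle of Section \ref{subsec:corr} so that $\mu_Y(B) > 0$ and $\Xi_Y$ is suitably disjoint from a neighbourhood $V$. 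Apply the polynomial transverse recurrence theorem; on the syndetic set $S$, the uniform lower bound $\mu_Y\bigl( \bigcap_k (-p_k(\lambda)).B\bigr) \geq c$ translates back through the correspondence principle to the existence of $\lambda_o \in P_o$ with $\lambda_o + p_k(\lambda) \in P_o$ for every $k$. The finite shift set $F$ can be absorbed by treating each $\mu_j \in F$ as an additional constant polynomial (a zero-degree term adjoined to each $p_k$); these constants lie in $\Lambda^q$, so the hypotheses of the dynamical theorem remain intact.

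The serious obstacle is the polynomial transverse recurrence theorem itself. Over $\bZ$ and for probability-preserving actions, this is the polynomial Szemer\'edi theorem of Bergelson and Leibman \cite{BL1,BL2}, with the uniform lower bounds needed for syndeticity supplied by Bergelson--McCutcheon \cite{BM}. Two distinct extensions are required to reach the present setting. First, the PET-induction and characteristic-factor machinery underlying Bergelson--Leibman must be developed for actions of an arbitrary lcsc abelian group $G$ such as $\bQ_p$ or $\bR$, with polynomial orbits parametrized by the discrete (but often dense-in-$R$) set $\Lambda$; for totally disconnected $G$ this is delicate since the usual Host--Kra nilfactors are not directly available, and even specifying the correct polynomial characteristic factors is non-trivial. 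Second, and more critically, the conclusion must be phrased on the cross-section $Y$ with the non-invariant transverse measure $\mu_Y$ rather than on $X$ with a $G$-invariant probability measure. In the linear case handled in this paper, the bridge between $\mu$-averages on $X$ and $\mu_Y$-measures on $Y$ is supplied by Austin's sated extensions \cite{austin} and crucially exploits the additivity of the $\alpha_k$ (which lets one interchange translation with averaging against indicator functions of small Bohr neighbourhoods in $G$); for polynomial iterates no such bridge is presently available, and one would need to develop a polynomial analogue of Austin's satedness machinery tailored to the transverse framework. This is the step where new ideas appear to be required, and presumably the reason the statement is currently only a conjecture.
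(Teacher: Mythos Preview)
The statement is a \emph{conjecture}; the paper does not prove it and says explicitly that ``the arguments in this paper, more specifically the ones to prove Proposition \ref{Prop_LowBndAvLambda} below, are currently not flexible enough to prove this conjecture.'' There is thus no proof in the paper to compare your proposal against, and you correctly recognise this in your final paragraph.

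Your outline of the strategy is broadly reasonable, but two points deserve correction. First, note that the finite set $F \subset \Lambda^\infty$ in the conjecture's hypothesis does not actually appear in the displayed definition of $S_F$; this looks like a drafting artefact carried over from Corollary \ref{cor_ring}, so your discussion of absorbing $F$ via constant polynomials is addressing a phantom. Second, your diagnosis of the obstacle slightly mislocates it. The passage from the transverse measure $\mu_Y$ to the invariant measure $\mu$ is not accomplished through Austin's satedness machinery; it is the elementary identity of Lemma \ref{Lemma_g1gr} combined with the smoothing argument in the proof of Proposition \ref{Prop_LowBndAvLambda}. Austin's theorem is invoked only afterwards, as a black box for commuting $G$-actions on the product space $X \times \Omega_{\Lambda_o}^{\times}$. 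The precise use of linearity is the identity $\alpha_k(\lambda - h) = \alpha_k(\lambda) - \alpha_k(h)$, which (together with $\alpha_k(V_1) \subset V_o$) allows the discrete sum over $\lambda \in \Lambda \cap F_n$ to be smoothed into a continuous integral over $F_n'$ by convolving against $\chi_{V_1}$. For a polynomial $p_k$, the difference $p_k(\lambda) - p_k(\lambda - h)$ depends on $\lambda$ and not only on $h$, so this change of variables collapses. That is the concrete and rather localized obstruction the paper has in mind, somewhat narrower than a wholesale need for a polynomial analogue of satedness in the transverse setting.
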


The arguments in this paper, more specifically the ones to prove Proposition \ref{Prop_LowBndAvLambda} below, are currently not flexible enough to prove this conjecture. \\

Similarly, given that much of the ergodic-theoretical machinery can be extended to actions of nilpotent groups, it is not be unreasonable to expect that some version of Theorem \ref{Thm_MainComb} (and Theorem \ref{Thm_MainErg}) could be extended to cut-and-project sets in lcsc \emph{nilpotent} groups. For more details about such sets, we refer the reader to \cite{BH1,BH2}.

\subsection{Organization of the paper}

In Section \ref{sec:Prel} we introduce notation and some fundamental concepts like cross-sections, transverse measures, Banach densities and strong F\o lner sequences. 
The most important result in Section \ref{subsec:corr} is a transverse version of Furstenberg's Correspondence Principle (Theorem \ref{Thm_CorrPrinciple}). In Section \ref{Sec:FK} we extend Furstenberg and Katznelson's multiple recurrence theorem to general locally compact and second countable abelian groups, using recent works of Austin. In Section \ref{sec:prfdyn}, we prove our main dynamical theorem (Theorem \ref{Thm_MainErg}), 
and in Section \ref{sec:prfcomb} we prove our main combinatorial theorem (Theorem \ref{Thm_MainComb}). Finally, in the appendix we show how a weaker version of Theorem \ref{Thm_MainComb} for cut-and-project sets in $\bR^d$ can be deduced from Furstenberg and Katznelson's IP-theory developed in the seminal paper \cite{FK2}. 

\subsection{Acknowledgements}

This work was completed during the M.B's research visit to SMRI (Sydney Mathematical Research Institute), and he wishes to express his gratitude to this organization for its hospitality. M.B is grateful to John Griesmer for useful discussions on the topic of the paper. M.B. was supported by the grants 11253320 and 2023-03803 from the Swedish Research Council. A.F. was supported by the ARC via grants DP210100162 and DP240100472.
Finally, we are grateful to the referee who provided a very careful report. 

\section{Preliminaries}
\label{sec:Prel}

Let $G$ be a locally compact and second countable (lcsc) abelian group. In particular, as a topological space, $G$ is $\sigma$-compact \cite[Theorem 2.B.4]{CornHarpe}. Let us also fix a Haar measure $m_G$ on $G$ for the rest of our discussions. 

\subsection{Borel $G$-spaces, separated cross-sections and transverse measures}
\label{subsec:cross}

Let $(X,\mathscr{B}_X)$ be a standard Borel space, equipped with a Borel measurable action $a : G \times X \ra X$. We refer to $(X,\mathscr{B}_X)$ as a \emph{Borel $G$-space}. To make the notation less heavy, we often write $a(g,x) = g.x$. We denote by $\Prob_G(X)$ the space of $G$-invariant Borel probability measures on $X$. A Borel set $Y \subset X$ is a \emph{cross-section} (or \emph{transversal}) if $G.Y = X$, and we denote by $\Xi_Y \subset G$ the set of \emph{return times}, defined by
\[
\Xi_Y = \{ g \in G \, : \, g.Y \cap Y \neq \emptyset \}.
\]
If $U$ is an open identity neighbourhood in $G$, we say that a cross-section 
$Y$ is \emph{$U$-separated} if $U \cap \Xi_Y = \{0\}$. It is well-known (see e.g. \cite[Theorem 2.4]{Slutsky}) that every Borel $G$-space admits $U$-separated cross-sections for some open identity neighbourhood. \\

In what follows, we fix an open and bounded identity neighbourhood $U$ in $G$, and 
a $U$-separated cross-section $Y \subset X$. The following result is standard (see e.g. \cite[Proposition 4.3]{BHK}).

\begin{lemma}
\label{Lemma_transmeas}
For all Borel sets $V \subset G$ and $B \subset Y$, the action set $V.B \subset X$
is Borel measurable. Furthermore, for every $\mu \in \Prob_G(X)$, there exists a \emph{unique} non-negative finite Borel measure $\mu_Y$ on $Y$ such that
\[
\mu(V.B) = m_G(V) \cdot \mu_Y(B), 
\]
for all Borel sets $V \subset G$ and $B \subset Y$ such that $V-V \subset U$.
\end{lemma}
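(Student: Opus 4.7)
The plan is to first establish Borel measurability of $V.B$ via an injectivity argument for the action map, and then construct $\mu_Y$ by pulling $\mu$ back through this injection and extracting its ``Haar-direction'' marginal. For the measurability, the key observation is that whenever $V\subset G$ is Borel with $V-V\subset U$, the action map $\phi:V\times Y\to X$, $\phi(g,y)=g.y$, is injective: if $g_1.y_1=g_2.y_2$ then $(g_2-g_1).y_1=y_2\in Y$, so $g_2-g_1\in\Xi_Y\cap U=\{0\}$, forcing $g_1=g_2$ and $y_1=y_2$. Since $\phi|_{V\times Y}$ is an injective Borel map between standard Borel spaces, the Lusin--Souslin theorem gives that $V.B=\phi(V\times B)$ is Borel in $X$ for every Borel $B\subset Y$. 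For a general Borel $V\subset G$, I would use $\sigma$-compactness of $G$ to fix a precompact open set $V_o$ with $V_o-V_o\subset U$ and countably many $g_n\in G$ with $G=\bigcup_n(g_n+V_o)$, partition $V=\bigsqcup_n V\cap(g_n+V_o)$, and conclude that $V.B=\bigcup_n(V\cap(g_n+V_o)).B$ is Borel as a countable union.

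For the existence of $\mu_Y$, the approach is to fix an open precompact $V\subset G$ with $V-V\subset U$ and $m_G(V)>0$, and pull $\mu|_{V.Y}$ back through the Borel isomorphism $\phi|_{V\times Y}$ to obtain a finite Borel measure $\tilde\mu$ on $V\times Y$. The heart of the argument is to show that $\tilde\mu$ factors as $m_G|_V\otimes\mu_Y$ for a unique finite Borel measure $\mu_Y$ on $Y$; specializing to a product $V'\times B$ with $V'\subset V$ and $V'-V'\subset U$ then yields $\mu(V'.B)=m_G(V')\cdot\mu_Y(B)$. This factorization will follow from $G$-invariance of $\mu$: whenever $V'\subset V$ and $g+V'\subset V$, we have $\phi((g+V')\times B)=g.\phi(V'\times B)$, so $\tilde\mu((g+V')\times B)=\tilde\mu(V'\times B)$ for every Borel $B\subset Y$. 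A standard disintegration argument, combined with the uniqueness of Haar measure, then forces the $V$-marginal of $\tilde\mu(\cdot\times B)$ to be proportional to $m_G|_V$, with the proportionality constant defining $\mu_Y(B)$. Independence of the construction from the choice of $V$ is obtained by intersecting: any two admissible sets $V_1,V_2$ contain a common open $V'$ with $m_G(V')>0$ and $V'-V'\subset U$, and the formula computed from $V_1$ or from $V_2$ must both agree with the one computed from $V'$.

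Uniqueness of $\mu_Y$ satisfying the displayed identity is immediate, since $\mu_Y(B)=\mu(V.B)/m_G(V)$ is forced for any admissible $V$. The main obstacle I anticipate is making the translation-invariance step in the product decomposition fully rigorous: because $V$ is generally not a subgroup, the translation $V'\mapsto g+V'$ is only partially defined inside $V$, so one must cover $V$ by countably many Borel pieces that can be related by translations staying inside $V$, and then use countable additivity together with the uniqueness of Haar measure on these local pieces. This is the standard route taken in \cite[Proposition 4.3]{BHK}, and I would follow it, taking care only to verify that the same construction goes through in the present generality of a lcsc abelian $G$ and a standard Borel $G$-space $(X,\mathscr{B}_X)$.
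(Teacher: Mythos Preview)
Your proposal is correct and matches the paper's treatment: the paper does not give an in-text proof of this lemma but simply records it as standard and cites \cite[Proposition 4.3]{BHK}, which is exactly the reference you invoke and whose argument you have faithfully sketched. The Lusin--Souslin step for measurability, the pullback of $\mu$ through the injective action map, and the local translation-invariance argument forcing the $m_G$-factorization are precisely the ingredients of that standard proof, so there is nothing to add.
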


\begin{remark}
Note that we can always view $\mu_Y$ as a Borel measure on $X$ by defining $\mu_Y(B) = \mu_Y(B \cap Y)$
for $B \in \mathscr{B}_X$.
\end{remark}

Given $\mu \in \Prob_G(X)$, we refer to the Borel measure $\mu_Y$ (which is finite, but not a probability measure in general) as the \emph{transverse measure associated to $\mu$}. Note that for any Borel set $B \subset Y$, we have
\[
\mu_Y(B) > 0 \iff \mu(V.B) > 0, 
\]
for some (any) identity neighbourhood $V$ in $G$. The following lemma will be used in several places in the proofs below. 

\begin{lemma}
\label{Lemma_g1gr}
Let $\Delta \subset G$ and suppose that there is an open identity neighbourhood $W \subset U$ such that $(\Xi_Y - \Delta) \cap W = \{0\}$. For every Borel set $V_o$ in $G$ 
such that $V_o-V_o \subset W$, and for every Borel set $B \subset Y$, 
\[
V_o.\Big(B \cap \bigcap_{k=1}^r (-g_k).B\Big) = B_{V_o} \cap \Big( \bigcap_{k=1}^r (-g_k).B_{V_o}\Big), \quad \textrm{for all $g_1,\ldots,g_r \in \Delta$},
\]
where $B_{V_o} = V_o.B$. In particular, for every $\mu \in \Prob_G(X)$, 
\[
\mu_Y\Big( B \cap \bigcap_{k=1}^r (-g_k).B\Big)
= \frac{1}{m_G(V_o)} \cdot \mu\Big(B_{V_o} \cap \Big( \bigcap_{k=1}^r (-g_k).B_{V_o}\Big)\Big).
\]
\end{lemma}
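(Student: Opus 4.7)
The plan is to first establish the set-theoretic identity, which carries the content of the lemma, and then read off the measure statement as an immediate application of Lemma \ref{Lemma_transmeas} to the set $B' = B \cap \bigcap_{k=1}^r (-g_k).B \subset Y$.

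For the inclusion $\subset$ of the set identity, the separation hypothesis is not needed. If $x = v.y$ with $v \in V_o$ and $y \in B$ satisfying $g_k.y \in B$ for all $k$, then $x \in V_o.B = B_{V_o}$, and since $G$ is abelian the action satisfies $g_k.(v.y) = (g_k + v).y = v.(g_k.y)$, so $g_k.x \in V_o.B = B_{V_o}$ and hence $x \in (-g_k).B_{V_o}$ for every $k$. The reverse inclusion is where the hypothesis $(\Xi_Y - \Delta) \cap W = \{0\}$ is decisive. I would take $x \in B_{V_o} \cap \bigcap_{k=1}^r (-g_k).B_{V_o}$, pick a representation $x = v.y$ with $v \in V_o$, $y \in B$, and for each $k$ a representation $g_k.x = v_k.y_k$ with $v_k \in V_o$, $y_k \in B$. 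Commutativity of $G$ rewrites $g_k.(v.y) = v_k.y_k$ as $y_k = (g_k + v - v_k).y$, and since $y, y_k \in Y$ this forces $g_k + v - v_k \in \Xi_Y$, i.e. $v - v_k \in \Xi_Y - g_k \subset \Xi_Y - \Delta$. Simultaneously $v - v_k \in V_o - V_o \subset W$, so the hypothesis collapses $v - v_k$ to $0$. The conclusion $v_k = v$ then yields $y_k = g_k.y \in B$ for every $k$, so the common base point $y$ lies in $B'$ and $x = v.y \in V_o.B'$, as desired.

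With the set identity in hand, the measure identity is immediate: $B' \subset Y$ is Borel (as an intersection of Borel sets contained in $Y$), and $V_o - V_o \subset W \subset U$, so Lemma \ref{Lemma_transmeas} applies and gives $\mu(V_o.B') = m_G(V_o) \cdot \mu_Y(B')$. Substituting the just-proved identity $V_o.B' = B_{V_o} \cap \bigcap_{k=1}^r (-g_k).B_{V_o}$ and dividing through by $m_G(V_o)$ gives the displayed formula. The only genuinely delicate point in the argument is the use of the separation hypothesis to force a single $v \in V_o$ that simultaneously represents $x$ and all its translates $g_k.x$; once this pigeonholing step is in place, everything else is a direct unwinding of the definitions and an invocation of Lemma \ref{Lemma_transmeas}.
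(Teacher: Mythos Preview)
Your argument is correct and matches the paper's proof essentially step for step: the trivial inclusion, then for the reverse inclusion picking representations $x=v.y$ and $g_k.x=v_k.y_k$, using $v-v_k\in(\Xi_Y-\Delta)\cap(V_o-V_o)\subset(\Xi_Y-\Delta)\cap W=\{0\}$ to force $v_k=v$, and finally reading off the measure identity from Lemma~\ref{Lemma_transmeas}. The only cosmetic difference is that the paper writes the representations as $x=(v_k-g_k).y_k$ rather than $g_k.x=v_k.y_k$, and leaves the invocation of Lemma~\ref{Lemma_transmeas} implicit.
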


\begin{proof}
Fix $V_o$ and $B$ as in the lemma and an $r$-tuple $g_1,\ldots,g_r \in \Delta$. We first note that the inclusion 
\[
V_o.\Big(B \cap \bigcap_{k=1}^r (-g_k).B\Big) \subseteq B_{V_o} \cap \Big( \bigcap_{k=1}^r (-g_k).B_{V_o}\Big)
\] 
is trivial. In particular, if the right-hand side is empty, so is the left-hand side. It thus remains to show that if the right-hand side is non-empty, then
\begin{equation}
\label{rightincl}
V_o.\Big(B \cap \bigcap_{k=1}^r (-g_k).B\Big) \supseteq B_{V_o} \cap \Big( \bigcap_{k=1}^r (-g_k).B_{V_o}\Big).
\end{equation}
To do this, fix an element $x$ in the set on the right-hand side of \eqref{rightincl}. By definition, we can find $v_o,v_1,\ldots,v_r \in V_o$ and $y_o,y_1,\ldots,y_r \in B$ such that
\[
x = v_o.y_o = (v_1-g_1).y_1 = \ldots = (v_r - g_r).y_r.
\]
In particular, $v_o - v_k + g_k \in \Xi_Y$ for all $k=1,\ldots,r$ and thus
\[
v_o - v_k \in (\Xi_Y - g_k) \cap (V_o - V_o) \subseteq (\Xi_Y - \Delta) \cap W = \{0\}, \quad \textrm{for all $k=1,\ldots, r$}.
\]
We conclude that $v_o = v_1 = \ldots = v_r$, and thus $y_o = (-g_1).y_1 = \ldots = (-g_r).y_r$. In particular, 
\[
x = v_o.y_o \in v_o.\Big(B \cap \bigcap_{k=1}^r (-g_k).B\Big) \subset V_o.\Big(B \cap \bigcap_{k=1}^r (-g_k).B\Big).
\]
Since $x$ is arbitrary, this shows \eqref{rightincl}, and we are done.
\end{proof}

\subsection{The Chabauty topology and hulls of uniformly discrete sets}
\label{subsec:cha}

We denote by $\mathscr{C}(G)$ the space of closed subsets of $G$, endowed with the Chabauty topology. We recall from \cite[Section E.1]{BP} that 
$\mathscr{C}(G)$ is a compact metrizable space, and a sequence $(P_n)$ in $\mathscr{C}(G)$ converges to some $P \in \mathscr{C}(G)$ if and only if the following two conditions hold:
\begin{itemize}
\item[(i)] If $p_{n_k} \in P_{n_k}$ for some sub-sequence $(n_k)$ and $p_{n_k} \ra p$ in $G$, then $p\in P$ \vspace{0.1cm}
\item[(ii)] For every $p \in P$, there exist $p_n \in P_n$ such that $p_n \ra p$ in $G$.
\end{itemize}
We equip $\mathscr{C}(G)$ with the jointly continuous $G$-action:
\[
(g,P) \mapsto g.P = P - g, \quad (g,P) \in G \times \mathscr{C}(G).
\]
Given $P_o \in \mathscr{C}(G)$, we define
\[
\Omega_{P_o} := \overline{G.P_o} \qand \Omega_{P_o}^{\times} := \Omega_{P_o} 
\setminus \{\emptyset\},
\]
and refer to $\Omega_{P_o}$ as the \emph{hull of $P_o$} and to $\Omega_{P_o}^{\times}$
as the \emph{punctured hull of $P_o$}. Note that the space $\Omega_{P_o}$ is compact, while $\Omega_{P_o}^{\times}$ is only compact if $\emptyset \notin \Omega_{P_o}$. We also define the \emph{canonical cross-section $\cT_{P_o} \subset \Omega_{P_o}^{\times}$}
by
\[
\cT_{P_o} = \{ P \in \Omega_{P_o}^{\times} \, : \, 0 \in P \}.
\]
Note that $\cT_{P_o}$ is a compact subset of $\Omega_{P_o}^{\times}$, and for every open set $V \subset G$, the set
\[
\mathcal{O}_V := (-V).\cT_{P_o} = \{ P \in \Omega_{P_o} \, : \, P \cap V \neq \emptyset \}
\]
is open in both $\Omega_{P_o}$ and $\Omega_{P_o}^{\times}$. \\

We say that $P \subset G$ is \emph{uniformly discrete} if $(P - P) \cap U = \{0\}$ for some identity neighbourhood $U$ in $G$. If we want to emphasize the dependence on $U$, we say that the set $P$ is \emph{$U$-uniformly discrete}. We say that $P \subset G$
\emph{has finite local complexity} if $P-P$ is locally finite. Clearly every set with finite local complexity is uniformly discrete. \\

In what follows, let $P_o \subset G$ be a $U$-uniformly discrete set for some 
open and bounded identity neighbourhood $U$ in $G$.

\begin{lemma}
\label{Lemma_TransSep}
Let $P \in \Omega_{P_o}$. Then,  
\[
P-P \subset \overline{P_o - P_o} \quad \textrm{for all $P \in \Omega^{\times}_{P_o}$},
\] 
and $\cT_{P_o}$ is a $U$-separated cross-section in $\Omega_{P_o}^{\times}$ with 
$\Xi_{\cT_{P_o}} \subset \overline{P_o - P_o}$.
\end{lemma}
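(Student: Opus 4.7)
The lemma has essentially three claims, and my plan is to handle them in the natural order of dependence.

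\textbf{Step 1: The inclusion $P - P \subset \overline{P_o - P_o}$ for all $P \in \Omega_{P_o}^{\times}$.} I would prove this directly from the Chabauty convergence criterion recalled just before the lemma. Since $P \in \Omega_{P_o} = \overline{G.P_o}$, there is a net (or, by metrizability, a sequence) $g_n \in G$ with $P_o - g_n \to P$ in $\mathscr{C}(G)$. Given $p,q \in P$, condition (ii) of Chabauty convergence supplies $p_n, q_n \in P_o - g_n$ with $p_n \to p$ and $q_n \to q$; then $p_n - q_n \in P_o - P_o$ and $p_n - q_n \to p - q$, so $p - q \in \overline{P_o - P_o}$.

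\textbf{Step 2: Promote uniform discreteness from $P_o$ to every $P \in \Omega_{P_o}^{\times}$.} This is the key bridge I will need: since $P_o$ is $U$-uniformly discrete with $U$ open, the same sequence $p_n - q_n \in P_o - P_o$ must eventually lie in $U$ if $p - q \in U$, and therefore be identically $0$, forcing $p = q$. In other words, $(P-P) \cap U = \{0\}$ for every $P \in \Omega_{P_o}^{\times}$; the openness of $U$ is the only subtle ingredient here, since in general $\overline{P_o - P_o}$ could pick up new points in $U$.

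\textbf{Step 3: $\cT_{P_o}$ is a cross-section in $\Omega_{P_o}^{\times}$, is $U$-separated, and $\Xi_{\cT_{P_o}} \subset \overline{P_o - P_o}$.} For the cross-section property, given any $P \in \Omega_{P_o}^{\times}$ I just pick some $p \in P$ (which exists since $P \neq \emptyset$) and observe that $p.P = P - p$ contains $0$, hence lies in $\cT_{P_o}$. For the last two properties simultaneously, suppose $g \in \Xi_{\cT_{P_o}}$, so there is $P \in \cT_{P_o}$ with $P - g = g.P \in \cT_{P_o}$; this forces $0 \in P$ and $0 \in P - g$, i.e.\ both $0$ and $g$ belong to $P$, so $g \in P - P$. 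By Step~1, $g \in \overline{P_o - P_o}$, giving the containment $\Xi_{\cT_{P_o}} \subset \overline{P_o - P_o}$. If in addition $g \in U$, Step~2 applied to this $P$ gives $(P-P) \cap U = \{0\}$, hence $g = 0$, which is the $U$-separated property.

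\textbf{Anticipated difficulty.} There is nothing deep here; the only conceptual point worth being careful about is Step~2. One might naively try to deduce $U$-uniform discreteness of $P$ directly from $P - P \subset \overline{P_o - P_o}$ and $(P_o - P_o) \cap U = \{0\}$, but this requires $U$ to be \emph{open} so that $\overline{P_o - P_o} \cap U = \{0\}$ as well. This is why the lemma (and the paper's setup) insists on working with open identity neighbourhoods throughout; I would make this explicit in the write-up.
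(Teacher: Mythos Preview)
Your proposal is correct and follows essentially the same route as the paper's proof: the paper also derives $P-P \subset \overline{P_o-P_o}$ from Chabauty convergence, then observes that $g \in \Xi_{\cT_{P_o}}$ forces $g \in P-P$ for some $P \in \cT_{P_o}$, and concludes $U$-separation from the fact that $U$ is open (exactly your ``anticipated difficulty'' point). The only cosmetic difference is that the paper packages Step~1 via arbitrary neighbourhoods $W$ rather than convergent sequences, and it leaves the cross-section property $G.\cT_{P_o} = \Omega_{P_o}^{\times}$ implicit, whereas you spell it out.
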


\begin{proof}
Fix an open identity neighbourhood $W$ in $G$ and a sequence $(g_n)$ in $G$ such that $g_n.P_o \ra P$ as $n \ra \infty$. Then, by using (i) in the characterization of sequential convergence in the Chabauty topology above, we see that for every pair $p_1,p_2 \in P$,
we can find $w_{1,n}, w_{2,n} \in W$ and $p^{o}_{1,n}, p^{o}_{2,n} \in P_o$ such that
\[
p_1 = p^{o}_{1,n} - g_n + w_{1,n} \qand p_2 = p^{o}_{2,n} - g_n + w_{2,n}, 
\] 
for all sufficiently large $n$. In particular, 
\[
p_1 - p_2 = (p_{1,n}^o + w_{1,n}) - (p_{2,n}^o + w_{2,n}) \subset P_o - P_o  + W-W.
\] 
Since $p_1$ and $p_2$ are arbitrary in $P$, we have $P-P \subset P_o - P_o + W-W$. Furthermore, $W$ is arbitrary, and thus $P-P \subset \overline{P_o - P_o}$. For the last assertion, pick $g \in \Xi_{\cT_{P_o}}$ and an element $P \in \cT_{P_o} \cap g.\cT_{P_o}$. Then $0 \in P$ and $-g \in P$, and thus $g \in P-P$. From the previous inclusion, we conclude that $\Xi_{\cT_{P_o}} \subset \overline{P_o - P_o}$. Since $P_o$
is $U$-uniformly discrete, it follows that $\Xi_{\cT_{P_o}} \cap U = \{0\}$ and thus $\cT_{P_o}$ is $U$-separated.
\end{proof}

If $M$ is a locally compact metrizable space and $\nu$ is a finite non-negative 
Borel measure on $M$, then a Borel set $B \subset M$ is called \emph{$\nu$-Jordan measurable} if $\nu(B^o) = \nu(\overline{B})$. By choosing a continuous metric $\rho$
on $M$ which induces the topology, it is not hard to show that for every $m \in M$,
there are at most countably many radii $r > 0$ such that $\rho$-ball of radius $r$ around $m$ is \emph{not} $\nu$-Jordan measurable. In particular, for every $m \in M$
and open neighbourhood $U$ around $m$, there is a $\nu$-Jordan measurable open 
neighbourhood of $m$ contained in $U$. 

\begin{lemma}
\label{Lemma_Jordan}
Let $P_o \subset G$ be a $U$-uniformly discrete set for some open identity neighbourhood $U$ in $G$ and let $V$ be a non-empty, open and $m_G$-Jordan measurable subset of $G$ such that $\overline{V} - \overline{V} \subset U$. Then, for every $\mu \in \Prob_G(\Omega_{P_o})$, the open set $\cO_V \subset \Omega_{P_o}$ is $\mu$-Jordan measurable.
\end{lemma}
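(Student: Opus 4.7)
The plan is to exploit the identification $\cO_V = (-V).\cT_{P_o}$ together with the transverse measure formula of Lemma \ref{Lemma_transmeas} to reduce the $\mu$-Jordan measurability of $\cO_V$ to the assumed $m_G$-Jordan measurability of $V$. Since $\cO_V$ is open, one has $\cO_V^o = \cO_V$, so the task reduces to showing $\mu(\overline{\cO_V}) = \mu(\cO_V)$.

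The key step is to introduce the auxiliary set $\cO_{\overline{V}} := \{P \in \Omega_{P_o} : P \cap \overline{V} \neq \emptyset\}$, which, by the same reasoning that gives $\cO_V = (-V).\cT_{P_o}$, coincides with $(-\overline{V}).\cT_{P_o}$. Assuming without loss of generality that $U$ is bounded (intersecting $U$ with any bounded open identity neighbourhood preserves $U$-uniform discreteness of $P_o$), the hypothesis $\overline{V} - \overline{V} \subseteq U$ forces $\overline{V}$ to be compact, and $\cT_{P_o}$ is compact by definition, so $\cO_{\overline{V}}$ is the continuous image of a compact set, hence closed in $\Omega_{P_o}$. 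This yields the sandwich
\[
\cO_V \subseteq \overline{\cO_V} \subseteq \cO_{\overline{V}}.
\]
Next I invoke Lemma \ref{Lemma_transmeas} twice with $B = \cT_{P_o}$: first with $V' = -V$, and then with $V' = -\overline{V}$. Both applications are legitimate because $V' - V' \subseteq \overline{V} - \overline{V} \subseteq U$ in either case. Combined with the inversion-invariance of $m_G$ on the abelian group $G$, this gives
\[
\mu(\cO_V) = m_G(V) \cdot \mu_{\cT_{P_o}}(\cT_{P_o}) \qand \mu(\cO_{\overline{V}}) = m_G(\overline{V}) \cdot \mu_{\cT_{P_o}}(\cT_{P_o}).
\]
The $m_G$-Jordan measurability of $V$ now forces $m_G(V) = m_G(\overline{V})$, whence $\mu(\cO_V) = \mu(\cO_{\overline{V}})$, and the sandwich above collapses to the desired identity $\mu(\overline{\cO_V}) = \mu(\cO_V)$.

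The argument is essentially a direct bookkeeping exercise and I do not anticipate a serious obstacle. The only minor subtlety is the closedness of $\cO_{\overline{V}}$, which rests on the compactness of $\overline{V}$ and is handled by the mild boundedness assumption on $U$ noted above; every other ingredient is a direct application of Lemma \ref{Lemma_transmeas}.
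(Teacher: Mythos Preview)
Your approach is essentially the same as the paper's: sandwich $\overline{\cO_V}$ between $\cO_V$ and $\cO_{\overline{V}}$, compute both endpoints via the transverse measure formula, and invoke $m_G$-Jordan measurability of $V$ to collapse the chain. You even add a clean justification for the inclusion $\overline{\cO_V}\subseteq\cO_{\overline{V}}$ via compactness of $\overline{V}$, which the paper simply asserts.

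There is, however, one genuine oversight. You invoke Lemma~\ref{Lemma_transmeas} for $\mu\in\Prob_G(\Omega_{P_o})$ with $Y=\cT_{P_o}$, but $\cT_{P_o}$ is a cross-section only in the \emph{punctured} hull $\Omega_{P_o}^{\times}$ (Lemma~\ref{Lemma_TransSep}); it fails to cover the fixed point $\emptyset\in\Omega_{P_o}$, so $G.\cT_{P_o}\neq\Omega_{P_o}$ and the transverse measure $\mu_{\cT_{P_o}}$ is not defined by Lemma~\ref{Lemma_transmeas} as stated. The paper handles this by writing $\mu=\mu'+\alpha\,\delta_{\{\emptyset\}}$ with $\mu'$ a finite $G$-invariant measure on $\Omega_{P_o}^{\times}$, observing that $\cO_V$ and $\cO_{\overline{V}}$ both miss $\emptyset$ so the $\delta_{\{\emptyset\}}$-part contributes nothing, and then running your computation with $(\mu')_{\cT_{P_o}}$ in place of $\mu_{\cT_{P_o}}$. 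Once you insert this one-line decomposition, your argument is complete and matches the paper's.
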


\begin{proof}
Fix $\mu \in \Prob_G(\Omega_{P_o})$ and write $\mu = \mu' + \alpha \cdot 
\delta_{\{\emptyset\}}$ for some finite and non-negative Borel measure $\mu'$ on $\Omega_{P_o}^{\times}$ and $\alpha \geq 0$. Recall that Lemma \ref{Lemma_TransSep} tells us that $\cT_{P_o}$ is a $U$-separated cross-section in $\Omega_{P_o}^{\times}$, and thus the transverse measure $(\mu')_{\cT_{P_o}}$ is well-defined. Since $\cO_V$ is open, $\overline{\cO_V} \subset \cO_{\overline{V}}$ and $\delta_{\{\emptyset\}}(\cO_{\overline{V}}) = 0$, we have
\begin{align*}
\mu(\cO_V) &= \mu'(\cO_V) 
= m_G(V) \cdot (\mu')_{\cT_{P_o}}(\cT_{P_o}) = 
m_G(\overline{V}) \cdot (\mu')_{\cT_{P_o}}(\cT_{P_o}) \\[0.2cm]
&= \mu'(\cO_{\overline{V}}) = \mu(\cO_{\overline{V}}) \geq \mu(\overline{\cO_V}),
\end{align*}
and thus $\cO_V$ is $\mu$-Jordan measurable. Since $\mu$ is arbitrary, we are done.
\end{proof}

We also record the following lemma for future use. 

\begin{lemma}
\label{Lemma_lwbndnu}
Let $\Lambda \subset G$ be a countable set and suppose that there exists a uniformly discrete set $\Lambda_o$ such that $\Lambda_o - \Lambda_o \subset \Lambda$. Then for all identity neighbourhoods $V_o$ and $V$ 
in $G$ such that $V_o - V_o \subset V$ and for all $\nu \in \Prob_G(\Omega^{\times}_{\Lambda_o})$, we have
\[
\sum_{\lambda \in \Lambda} \chi_V(g+\lambda) \geq \nu(\cO_{V_o} \cap g.\cO_{V_o}), \quad \textrm{for all $g \in G$},
\]
where $\chi_V$ denotes the indicator function of $V$, and $\cO_{V_o} = (-V_o).\cT_{\Lambda_o} \subset \Omega_{\Lambda_o}^{\times}$.
\end{lemma}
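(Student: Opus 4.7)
The plan is to exploit the fact that $\nu$ is a probability measure, which reduces the inequality to a single non-trivial case where the right-hand count vanishes.

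First, suppose $\Lambda \cap (V-g) \neq \emptyset$, so that $\sum_{\lambda \in \Lambda} \chi_V(g+\lambda) \geq 1$. Since $\nu$ is a probability measure on $\Omega^\times_{\Lambda_o}$, $\nu(\cO_{V_o} \cap g.\cO_{V_o}) \leq 1$, and the inequality is automatic.

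Suppose instead $\Lambda \cap (V-g) = \emptyset$; I claim that $\cO_{V_o} \cap g.\cO_{V_o}$ is in fact empty, whence its $\nu$-measure is $0$. Noting that $g.\cO_{V_o} = \cO_{V_o - g}$, any $P$ in the intersection provides some $a \in P \cap V_o$ and $b \in P \cap (V_o - g)$. Setting $\lambda := b - a \in P - P$, one has
\[
g + \lambda = (g + b) - a \in V_o - V_o \subset V,
\]
so $\lambda \in V - g$. By Lemma \ref{Lemma_TransSep}, $P - P \subset \overline{\Lambda_o - \Lambda_o}$, so $\lambda$ is a limit of elements of $\Lambda_o - \Lambda_o$. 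Since $V_o$ is an open identity neighbourhood, $V_o - V_o$ is an open neighbourhood of $g + \lambda$, and its translate $(V_o - V_o) - g$ is an open neighbourhood of $\lambda$ contained in $V - g$. Hence this translate must contain some $\mu \in \Lambda_o - \Lambda_o \subset \Lambda$, giving $\mu \in \Lambda \cap (V-g)$, a contradiction.

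The only nontrivial inputs are Lemma \ref{Lemma_TransSep} (to locate $\lambda$ in $\overline{\Lambda_o - \Lambda_o}$) and the openness of $V_o - V_o$, which is exactly what allows one to pull $\mu$ back out of the closure into $\Lambda_o - \Lambda_o$. There is no serious obstacle; the argument is a soft topological dichotomy rather than a quantitative estimate.
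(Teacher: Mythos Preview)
Your proof is correct and follows essentially the same dichotomy as the paper: the entire content is the implication $\cO_{V_o} \cap g.\cO_{V_o} \neq \emptyset \implies (g+\Lambda) \cap V \neq \emptyset$. The only organisational difference is that the paper, instead of choosing an arbitrary $P$ and then invoking Lemma~\ref{Lemma_TransSep} together with an openness/approximation step to pass from $\overline{\Lambda_o-\Lambda_o}$ back to $\Lambda_o-\Lambda_o$, directly selects $P$ of the form $g_o.\Lambda_o$ from the dense $G$-orbit of $\Lambda_o$ inside the open set $\cO_{V_o} \cap g.\cO_{V_o}$; this produces elements of $\Lambda_o-\Lambda_o\subset\Lambda$ immediately and bypasses the closure altogether.
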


\begin{proof}
It is clearly enough to show that 
\[
\cO_{V_o} \cap g.\cO_{V_o} \neq \emptyset \implies (g + \Lambda) \cap V \neq \emptyset. 
\]
To do this, note that if the open set $\cO_{V_o} \cap g.\cO_{V_o}$ is non-empty, then, since $\Lambda_o$ has a dense $G$-orbit in $\Omega_{\Lambda_o}^{\times}$, there is $g_o \in G$ such that
$g_o.\Lambda_o \in \cO_{V_o} \cap g.\cO_{V_o}$, or equivalently,
\[
(\Lambda_o - g_o) \cap V_o \neq \emptyset \qand (\Lambda_o - g_o+g) \cap V_o \neq \emptyset.
\]
By taking difference, this implies that 
\[
\emptyset \neq (g + \Lambda_o - \Lambda_o) \cap (V_o - V_o) \subset (g + \Lambda) \cap V,
\]
and we are done.
\end{proof}

\subsection{Strong F\o lner sequences}
\label{Subsec:Folner}
Let $G$ be a lcsc abelian group and let $(F_n)$ be a sequence of bounded Borel sets with positive Haar measures. We say that $(F_n)$ is \emph{a strong F\o lner sequence} if
\vspace{0.1cm}
\begin{itemize}
\item[(i)] for every compact subset $C \subset G$, we have $\lim_{n \ra \infty} \frac{m_G(F_n+C)}{m_G(F_n)} = 1$, \vspace{0.2cm}
\item[(ii)] there exists an open bounded identity neighbourhood $V \subset G$ such that
\[
\lim_{n \ra \infty} \frac{m_G\big( \bigcap_{v \in V} (F_n-v) \big)}{m_G(F_n)} = 1.
\]
\end{itemize} 
If we want to emphasize the dependence on the identity neighbourhood $V$, we refer to 
the sequence $(F_n)$ as a \emph{$V$-adapted strong F\o lner sequence}.

\begin{remark}
Note that if $G$ is a countable discrete group, then every F\o lner sequence in $G$
is a strong F\o lner sequence (we can take $V = \{e\}$).
\end{remark}

\begin{lemma}
\label{Lemma_Folner}
Let $V$ be an open and bounded identity neighbourhood in $G$. Then there 
exists a $V$-adapted strong F\o lner sequence in $G$. Furthermore, if $(L_n)$ is a 
$V$-adapted strong F\o lner sequence in $G$ and $(g_n)$
is any sequence in $G$, then 
\[
F_n := L_n + g_n \qand \widetilde{F}_n := L_n + V 
\]
are both $V$-adapted strong F\o lner sequences.
\end{lemma}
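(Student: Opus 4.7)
\medskip

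\noindent\textbf{Plan of proof.} My strategy is to handle the three assertions in order, relying throughout on the fact that Haar measure on the abelian group $G$ is translation-invariant and that lcsc abelian groups are amenable.

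\medskip

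\noindent For existence, I would start from any F\o lner sequence $(L_n')$ in $G$, which exists because $G$ is a lcsc amenable group and satisfies the strong (Emerson–Greenleaf) form of the F\o lner condition
\[
\lim_{n\to\infty} \frac{m_G(L_n' + K)}{m_G(L_n')} = 1 \quad \text{for every compact } K \subset G.
\]
Then I would set $F_n := L_n' + V$. For condition (i), the identity $F_n + C = L_n' + (V+C)$ and the F\o lner condition for the compact set $V+C$ give $m_G(F_n+C)/m_G(L_n') \to 1$, and since $m_G(F_n)/m_G(L_n') \to 1$ (by the F\o lner condition for $V$), the ratio $m_G(F_n+C)/m_G(F_n)$ tends to $1$. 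For condition (ii), the key geometric observation is
\[
L_n' \;\subset\; \bigcap_{v \in V} (F_n - v),
\]
since for $x \in L_n'$ and any $v \in V$ one has $x + v \in L_n' + V = F_n$. Together with $m_G(L_n')/m_G(F_n) \to 1$, this yields the (ii) condition for the neighbourhood $V$.

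\medskip

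\noindent For the translated sequence $F_n := L_n + g_n$, both assertions follow immediately from translation invariance of $m_G$ combined with commutativity of $G$: indeed $F_n + C = (L_n + C) + g_n$ and $\bigcap_{v \in V}(F_n - v) = g_n + \bigcap_{v \in V}(L_n - v)$, so both ratios in (i) and (ii) coincide with the corresponding ratios for $L_n$ and therefore tend to $1$ by assumption.

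\medskip

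\noindent For the thickened sequence $\widetilde{F}_n := L_n + V$, I would repeat the pattern of the existence argument. For (i), I write $\widetilde{F}_n + C = L_n + (V + C)$ and apply property (i) for $L_n$ to the compact set $V + C$, dividing by $m_G(L_n + V)/m_G(L_n) \to 1$. For (ii), the inclusion $L_n \subset \bigcap_{v \in V}(\widetilde{F}_n - v)$ is immediate, and combined with $m_G(L_n)/m_G(\widetilde{F}_n) \to 1$ this delivers the conclusion. I expect the only mild subtlety — and the closest thing to an obstacle — to be making sure that the F\o lner condition in the form (i) is available for \emph{compact} sets rather than only singletons; this is handled once, at the very start, by invoking the Emerson–Greenleaf version of amenability for lcsc abelian groups.
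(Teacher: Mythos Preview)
Your proposal is correct and follows essentially the same route as the paper: start from an Emerson-type F\o lner sequence, thicken by $V$ to get the $V$-adapted property via the inclusion $L_n' \subset \bigcap_{v\in V}(L_n'+V-v)$, and handle translates by translation-invariance of Haar measure. One small slip recurs: you invoke ``the F\o lner condition for the compact set $V+C$'' (and likewise ``for $V$''), but $V$ is open and bounded, not compact, so $V+C$ is not compact either; you should apply condition (i) to the compact set $\overline{V+C}$ (or $\overline{V}+C$), exactly as the paper does when it sets $B=\overline{C+V}\cup\{0\}$. With that cosmetic fix your argument goes through unchanged.
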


\begin{proof}
Since $G$ is both amenable and $\sigma$-compact, 
\cite[Theorem 4 and Proposition 1]{Emerson} guarantee that there exists a 
sequence $(K_n)$ of bounded Borel sets in $G$ with positive Haar measures 
such that
for every compact subset $B \subset G$, 
\begin{equation}
\label{emcond}
\lim_{n \ra \infty} \frac{m_G((K_n+B) \Delta K_n)}{m_G(K_n)} = 0,
\end{equation}
where $\Delta$ denotes the set-theoretical difference. In particular, if $0 \in B$,
then
\begin{equation}
\label{limit1}
1 \leq \frac{m_G(K_n + B)}{m_G(K_n)} = 1 + \frac{m_G((K_n + B) \setminus K_n)}{m_G(K_n)} \ra 1, \quad \textrm{as $n \ra \infty$}.
\end{equation}
Let $V$ be an open bounded identity neighbourhood in $G$ and define $F_n = K_n + V$. We claim 
that $(F_n)$ is a $V$-adapted strong F\o lner sequence in $G$. Let us first show
that $(F_n)$ satisfies $(i)$. Let $C$ be a compact subset of $G$. Since 
$V$ is pre-compact, $B := \overline{C + V} \cup \{0\}$ is compact and $F_n + C \subset K_n + B$, and thus \eqref{emcond} implies that
\[
1 \leq \frac{m_G(F_n + C)}{m_G(F_n)} \leq \frac{m_G(K_n + B)}{m_G(K_n)} \ra 1, \quad n \ra \infty,
\]
proving (i). Let us now show (ii). Since $F_n = K_n + V$ and $V$ is a pre-compact identity neighbourhood, \eqref{limit1} tells us that
\[
1 \geq \lim_{n \ra \infty} 
\frac{m_G\big( \bigcap_{v \in V} (F_n-v) \big)}{m_G(F_n)} \geq \lim_{n \ra \infty} \frac{m_G(K_n)}{m_G(K_n + V)} = 1,
\]
and thus (ii) holds for $(F_n)$. For the last assertion, let $(L_n)$ be a $V$-adapted strong F\o lner sequence in $G$, let $(g_n)$ be any sequence in $G$ and let $F_n = L_n + g_n$. Then, if $C$ is a compact set in $G$, we have $F_n + C = (L_n + C) + g_n$, and 
\[
\frac{m_G(F_n + C)}{m_G(F_n)} = \frac{m_G(L_n + C)}{m_G(L_n)} \ra 1, \quad n \ra \infty,
\]
since $(L_n)$ is a strong F\o lner sequence in $G$.
Furthermore, 
\[
\bigcap_{v \in V} (F_n - v) = g_n + \bigcap_{v \in V} (L_n - v), \quad \textrm{for all $n$},
\]
and thus
\[
\lim_{n \ra \infty} \frac{m_G\big( \bigcap_{v \in V} (F_n-v) \big)}{m_G(F_n)}
=
\lim_{n \ra \infty} \frac{m_G\big( \bigcap_{v \in V} (L_n-v) \big)}{m_G(L_n)} = 1,
\]
since $(L_n)$ is $V$-adapted. We conclude that $(F_n)$ is a $V$-adapted strong F\o lner sequence in $G$. The proof that $(\widetilde{F}_n)$ is a $V$-adapted strong F\o lner sequence is similar.
\end{proof}

\subsection{Uniformly discrete sets and Banach densities}
\label{subsec:Banach}
Let $U$ be an open identity neighbourhood in $G$ and let $P \subset G$ be a $U$-uniformly discrete set. Given a strong F\o lner sequence $(F_n)$ in $G$, which always exists in view of Lemma \ref{Lemma_Folner}, we define
the \emph{upper density $\overline{d}_{(F_n)}(P)$ of $P$ along $(F_n)$} by 
\[
\overline{d}_{(F_n)}(P) = \varlimsup_{n \ra \infty} \frac{|P \cap F_n|}{m_G(F_n)},
\]
and the \emph{upper Banach density $d^*(P)$} by
\[
d^*(P) = \sup \big\{ \, \overline{d}_{(F_n)}(P) \, : \, \textrm{$(F_n)$ is a strong F\o lner sequence in $G$} \big\}.
\]
We will show below (Corollary \ref{cor_dstarfinite}) that $d^*(P)$ is always finite. To do this, we first need to prove the following lemma.
\begin{lemma}
\label{Lemma_UQV}
Let $U$ be an open identity neighbourhood and let $P \subset G$ be a $U$-uniformly discrete set. Then, for every  Borel set $V \subset G$ such that $V-V \subset U$
and for every bounded Borel set $Q \subset G$,
\[
|P \cap Q| \leq \frac{m_G(Q+V)}{m_G(V)}.
\]
Furthermore, for every Borel set $V_o \subset V$ such that $V_o - V_o \subset V$, 
\[
|P \cap (Q+V)| \geq \frac{m_G((P+V_o) \cap (Q+V_o))}{m_G(V_o)}.
\]
\end{lemma}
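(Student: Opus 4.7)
The plan is to exploit a disjoint-packing argument based on the fact that $U$-uniform discreteness together with $V-V\subset U$ forces the translates $\{p+V\}_{p\in P}$ to be pairwise disjoint. Indeed, if $p_1,p_2\in P$ and $(p_1+V)\cap(p_2+V)\neq\emptyset$, then $p_1-p_2\in V-V\subset U$, so $p_1=p_2$ by uniform discreteness. The same reasoning applies with $V$ replaced by any subset $V_o\subseteq V$, since then $V_o-V_o\subset V-V\subset U$ (and we are additionally given $V_o-V_o\subset V$, which is what will be used in the second step).

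For the first inequality, I would argue as follows. For every $p\in P\cap Q$ we have $p+V\subset Q+V$. By the disjointness just observed,
\[
\bigsqcup_{p\in P\cap Q}(p+V)\;\subset\;Q+V,
\]
and taking Haar measure gives $|P\cap Q|\cdot m_G(V)\le m_G(Q+V)$. (The sum is finite because $P$ is $U$-uniformly discrete and $Q$ is bounded; if $m_G(V)=0$ or $m_G(Q+V)=\infty$ the statement is trivial.) Dividing by $m_G(V)$ yields the claimed bound.

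For the second inequality, I would apply the disjointness to the translates $\{p+V_o\}_{p\in P}$, which are pairwise disjoint by the remark above. Then
\[
(P+V_o)\cap(Q+V_o)\;=\;\bigsqcup_{p\in P}\bigl((p+V_o)\cap(Q+V_o)\bigr),
\]
and the $p$-th summand is non-empty only when $p\in Q+V_o-V_o\subset Q+V$. Since each summand has measure at most $m_G(V_o)$, we obtain
\[
m_G\bigl((P+V_o)\cap(Q+V_o)\bigr)\;\le\;|P\cap(Q+V)|\cdot m_G(V_o),
\]
which rearranges into the desired inequality.

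There is no real obstacle here: the argument is pure measure theory once the key disjointness is noted. The only small matters to verify are that $P\cap Q$ and $P\cap(Q+V)$ are finite (which follows from uniform discreteness of $P$ combined with boundedness of $Q$, hence boundedness of $Q+V$ when $V$ is bounded; otherwise the inequalities are automatic), and that $V_o-V_o\subset U$, which is immediate from $V_o\subset V$ and $V-V\subset U$.
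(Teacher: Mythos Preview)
Your proof is correct and follows essentially the same disjoint-packing argument as the paper: both use that $V-V\subset U$ forces the translates $\{p+V\}_{p\in P}$ to be pairwise disjoint, then pack $\bigsqcup_{p\in P\cap Q}(p+V)$ into $Q+V$ for the first inequality and, for the second, decompose $(P+V_o)\cap(Q+V_o)$ as a disjoint union over $p\in P$ and observe that only $p\in Q+V_o-V_o\subset Q+V$ contribute.
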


\begin{proof}
Let $V$ be a Borel set which contains $0$ and such that $V - V \subset U$, and let $Q \subset G$ be a bounded Borel set. Then, since $P$ is $U$-uniformly discrete, all sets of the form $p+V$, for $p \in P$ are disjoint, and thus
\begin{equation}
\label{PV}
(P + V) \cap (Q+V) = \bigsqcup_{p \in P} (p + V) \cap (Q+V) \supseteq \bigsqcup_{p \in P \cap Q} (p+V).
\end{equation}
Hence, 
\[
m_G(Q + V) \geq m_G((P + V) \cap (Q+V)) \geq m_G(V) |P \cap Q|.
\]
For the last inequality in the lemma, let $V_o \subset V$ be a Borel set such that $V_o - V_o \subset V$. Then,
\[
(P + V_o) \cap (Q+V_o) 
= 
\bigsqcup_{p \in P} (p + V_o) \cap (Q+V_o) \subseteq \bigsqcup_{p \in P \cap (Q+V)} (p+V_o),
\]
and thus
\[
m_G((P + V_o) \cap (Q+V_o)) \leq m_G(V_o) \cdot |P \cap (Q+V)|.
\]
\end{proof}

\begin{corollary}
\label{cor_dstarfinite}
Let $U$ be an open identity neighbourhood and let $P \subset G$ be a $U$-uniformly discrete set. Then, $d^*(P) \in [0,1/m_G(V)]$ for every identity neighbourhood $V$
in $G$ such that $V-V \subset U$.
\end{corollary}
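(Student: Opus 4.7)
The lower bound $d^*(P) \geq 0$ is immediate from the definition, so the real content is the upper bound $d^*(P) \leq 1/m_G(V)$. The plan is to fix an arbitrary strong F\o lner sequence $(F_n)$ in $G$ and apply the first inequality of Lemma \ref{Lemma_UQV} with $Q = F_n$. This directly yields $|P \cap F_n| \leq m_G(F_n + V)/m_G(V)$; dividing through by $m_G(F_n)$ and taking $\varlimsup$ then hopefully produces $\overline{d}_{(F_n)}(P) \leq 1/m_G(V)$, after which taking the supremum over $(F_n)$ finishes the job.

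The main obstacle is that strong F\o lner condition (i) only controls ratios $m_G(F_n + C)/m_G(F_n)$ when the thickening set $C$ is compact, whereas the given $V$ is merely an identity neighbourhood and need not be pre-compact (in fact $m_G(V)$ could even equal $+\infty$, in which case the asserted bound $1/m_G(V) = 0$ is non-trivial). To circumvent this, I would first prove the intermediate claim that $\overline{d}_{(F_n)}(P) \leq 1/m_G(K)$ for every compact Borel subset $K \subset V$. Since $K \subset V$ gives $K-K \subset V-V \subset U$, Lemma \ref{Lemma_UQV} applies to $K$ and yields $|P \cap F_n| \leq m_G(F_n + K)/m_G(K)$; and because $K$ is compact, strong F\o lner condition (i) gives $m_G(F_n + K)/m_G(F_n) \to 1$. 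The intermediate bound follows upon dividing and passing to the limsup.

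To return from compact subsets to $V$ itself, I would invoke the inner regularity of the Haar measure on the $\sigma$-compact group $G$ to write $m_G(V) = \sup\{m_G(K) : K \subset V \text{ compact}\}$. The intermediate claim then gives
\[
\overline{d}_{(F_n)}(P) \;\leq\; \inf_{K \subset V \text{ compact}} \frac{1}{m_G(K)} \;=\; \frac{1}{m_G(V)},
\]
and taking the supremum over all strong F\o lner sequences $(F_n)$ concludes the proof. The finiteness of $d^*(P)$ asserted in Subsection \ref{subsec:Banach} is then obtained by specializing to any pre-compact open $V$ with $V - V \subset U$, whose existence follows from local compactness of $G$ applied to the open identity neighbourhood $U$.
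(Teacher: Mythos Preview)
Your proposal is correct and follows essentially the same route as the paper: apply Lemma~\ref{Lemma_UQV} with $Q=F_n$, divide by $m_G(F_n)$, use the strong F\o lner property to kill the ratio $m_G(F_n+\,\cdot\,)/m_G(F_n)$, and take the supremum over all strong F\o lner sequences.

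The one genuine difference is your treatment of the passage from $V$ to a compact set. The paper's proof simply writes
\[
\varlimsup_{n\to\infty}\frac{m_G(F_n+V)}{m_G(V)\,m_G(F_n)}=\frac{1}{m_G(V)},
\]
thereby applying the strong F\o lner condition~(i) directly to $V$, which is only literally justified when $\overline{V}$ is compact (note that $0\in V$ forces $V\subset V-V\subset U$, so this is automatic whenever $U$ is pre-compact, which is the standing situation elsewhere in the paper but not stated in the corollary). Your detour through compact $K\subset V$ and inner regularity of Haar measure handles the general case cleanly and even covers the possibility $m_G(V)=+\infty$. So your argument is slightly more careful than the paper's at this point, at the cost of one extra (standard) approximation step; otherwise the two proofs are identical.
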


\begin{proof}
Let $(F_n)$ be a strong F\o lner sequence, and let $V$
be an identity neighbourhood in $G$ such that $V-V \subset U$. By Lemma \ref{Lemma_UQV}, 
\[
\varlimsup_{n \ra \infty}
\frac{|P \cap F_n|}{m_G(F_n)} \leq \varlimsup_{n \ra \infty}
\frac{m_G(F_n+V)}{m_G(V) \cdot m_G(F_n)} = \frac{1}{m_G(V)}.
\]
Since $(F_n)$ is arbitrary, we are done.
\end{proof}

\subsection{Syndetic sets}

Let $\Lambda$ be a (not necessarily closed) subset of $G$. We say that $\Lambda$ is
\emph{relatively dense} in $G$ if there is a compact set $Q$ in $G$ such that 
$G = \Lambda + Q$. A subset $\Lambda_o \subset \Lambda$ is \emph{syndetic} if there
is a finite set $F$ in $G$ such that $\Lambda \subset \Lambda_o +  F$. The following useful lemma will be used several times in the proofs of the main theorems.

\begin{lemma}
\label{Lemma_syndfromav}
Let $\Lambda \subset G$ be a set with finite local complexity, and suppose that 
$f$ is a non-negative and bounded function on $\Lambda$ with the property that for every strong F\o lner sequence $(F_n)$ in $G$, 
\[
\varliminf_{n \ra \infty} \frac{1}{m_G(F_n)} \sum_{\lambda \in \Lambda \cap F_n} f(\lambda) > 0.
\]
Then there exists $c > 0$ such that the set $\{\lambda \in \Lambda \, : \, f(\lambda) \geq c\}$
is syndetic in $\Lambda$.
\end{lemma}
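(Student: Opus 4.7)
The plan is to prove the contrapositive: assuming that for every $c > 0$ the set $S_c := \{\lambda \in \Lambda : f(\lambda) \geq c\}$ fails to be syndetic in $\Lambda$, I will build a strong F\o lner sequence $(F_n)$ in $G$ along which $\frac{1}{m_G(F_n)}\sum_{\lambda \in \Lambda \cap F_n} f(\lambda)$ tends to $0$, contradicting the hypothesis.

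First, since $\Lambda$ has finite local complexity, it is $U$-uniformly discrete for some open bounded identity neighbourhood $U$. I will fix an open bounded symmetric identity neighbourhood $V$ with $V - V \subset U$, and invoke Lemma \ref{Lemma_Folner} to pick a $V$-adapted strong F\o lner sequence $(L_n)$ in $G$. For each $n \geq 1$, I will then choose a translation $\lambda_n \in \Lambda$ so that $F_n := L_n + \lambda_n$ is disjoint from $S_{1/n}$; Lemma \ref{Lemma_Folner} guarantees that each such translate is still a $V$-adapted strong F\o lner sequence.

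The one step that requires a genuine argument is producing $\lambda_n$, because the hypothesis of non-syndeticity of $S_{1/n}$ in $\Lambda$ only rules out \emph{finite} translator sets, whereas I need to rule out the compact translator set $-\overline{L_n}$. Finite local complexity bridges this gap: if there were a compact $K \subset G$ with $\Lambda \subseteq S_{1/n} + K$, then writing any $\lambda \in \Lambda$ as $\lambda = s + k$ with $s \in S_{1/n} \subset \Lambda$ would force $k = \lambda - s$ to lie in the finite set $E := (\Lambda - \Lambda) \cap K$; hence $\Lambda \subseteq S_{1/n} + E$, contradicting non-syndeticity. Applying this observation with $K = -\overline{L_n}$ yields $\lambda_n \in \Lambda \setminus (S_{1/n} - \overline{L_n})$, which is exactly the required condition $(L_n + \lambda_n) \cap S_{1/n} = \emptyset$.

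With this $F_n$, every $\lambda \in \Lambda \cap F_n$ satisfies $f(\lambda) < 1/n$, so $\sum_{\lambda \in \Lambda \cap F_n} f(\lambda) \leq \frac{1}{n}\,|\Lambda \cap F_n|$. Lemma \ref{Lemma_UQV} bounds $|\Lambda \cap F_n| \leq m_G(F_n+V)/m_G(V)$, and the F\o lner property of $(F_n)$ gives $m_G(F_n+V)/m_G(F_n) \to 1$, so the normalized averages tend to $0$, producing the desired contradiction. The step I expect to be trickiest is the FLC-based compact-to-finite conversion used to locate $\lambda_n$; the rest is routine bookkeeping with F\o lner sequences and the uniform-discreteness density bound.
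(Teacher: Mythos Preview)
Your proposal is correct and follows essentially the same strategy as the paper: argue by contradiction, translate a fixed strong F\o lner sequence so that $F_n = L_n + (\text{translate})$ misses $S_{1/n}$, and then use the uniform-discreteness bound of Lemma~\ref{Lemma_UQV} together with the F\o lner property to force the normalized averages to $0$. The only organizational difference is that the paper first shows some $S_c$ is relatively dense in $G$ (so the translate $g_n$ is found directly in $G$, with no appeal to finite local complexity) and only invokes the FLC compact-to-finite conversion at the very end to upgrade relative density in $G$ to syndeticity in $\Lambda$, whereas you front-load that same conversion to locate $\lambda_n$ inside $\Lambda$; the underlying ideas are identical.
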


\begin{proof}
Given $c > 0$, define
\[
S_c = \{ \lambda \in \Lambda \, : \, f(\lambda) \geq c \}.
\]
We first show that there exists $c > 0$ such that $S_c$ is relatively dense in $G$. To do this, fix a strong F\o lner sequence $(L_n)$ in $G$ and assume that there is no $c > 0$ for which $S_c$ is relatively dense in $G$. In particular, for every $n$, we can find an element $g_n \in G$ such that $g_n \notin S_{1/n} - L_n$, or equivalently 
$S_{1/n} \cap (L_n + g_n) = \emptyset$. By Lemma \ref{Lemma_Folner}, $F_n = L_n + g_n$ is again a strong F\o lner sequence in $G$. Now, since $S_{1/n} \cap F_n = \emptyset$, we have
\begin{align*}
\sum_{\lambda \in \Lambda \cap F_n} f(\lambda)
&= \sum_{\lambda \in (\Lambda \cap S^c_{1/n}) \cap F_n} f(\lambda) < \frac{1}{n} |\Lambda \cap F_n|,
\end{align*}
for all $n$. Since $\Lambda$ has finite local complexity, it is a $U$-uniformly discrete set in $G$ for some identity neighbourhood $U$, so by Lemma \ref{Lemma_UQV},
we have 
\[
|\Lambda \cap F_n| \leq m_G(F_n+V)/m_G(V),
\]
for every identity neighbourhood $V$ in $G$ such that $V-V \subset U$, and thus
\[
\frac{1}{m_G(F_n)} \cdot 
\sum_{\lambda \in \Lambda \cap F_n} f(\lambda) \leq \frac{m_G(F_n +V)}{n \cdot m_G(V) \cdot m_G(F_n)}, \quad \textrm{for all $n$}.
\]
Since $(F_n)$ is a strong F\o lner sequence, the right hand side tends to zero as $n \ra \infty$,
contradicting our assumption that the limit inferior of the left-hand side is strictly positive. We conclude that there exist $c > 0$ and a compact set $Q \subset G$ such that $S_c + Q = G$. In particular, $\Lambda \subset S_c + Q$. Note that if $q \in Q$
satisfies $\Lambda \cap (S_c + q) \neq \emptyset$, then $q \in \Lambda - S_c \subset \Lambda - \Lambda$, and thus
\[
\Lambda \subset S_c + Q \cap (\Lambda - \Lambda).
\]
Since $\Lambda$ has finite local complexity, $Q \cap (\Lambda - \Lambda)$ is a finite set, and thus $S_c$ is a syndetic subset of $\Lambda$.
\end{proof}

\begin{corollary}
\label{cor_syndav}
Let $\Gamma$ be a countable abelian group and suppose that $f$ is a non-negative and 
bounded function on $\Gamma$ with the property that for every F\o lner sequence $(F_n)$ in $\Gamma$, 
\[
\varliminf_{n \ra \infty} \frac{1}{|F_n|} \sum_{\gamma \in F_n} f(\gamma) > 0.
\]
Then there exists $c > 0$ such that the set $\{\gamma \in \Gamma \, : \, f(\gamma) \geq c\}$
is syndetic in $\Gamma$.
\end{corollary}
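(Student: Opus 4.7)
The plan is to derive this statement as an immediate specialization of Lemma \ref{Lemma_syndfromav} to the discrete setting. First I would view the countable abelian group $\Gamma$ as an lcsc abelian group in its discrete topology, and equip it with the counting measure $m_\Gamma$, which is a Haar measure. With this choice, $m_\Gamma(F) = |F|$ for every finite $F \subset \Gamma$, so Haar integration against $m_\Gamma$ reduces to summation.

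Next I would take $\Lambda = \Gamma$ in Lemma \ref{Lemma_syndfromav}. Since $\Gamma$ is discrete, the whole group is trivially uniformly discrete (with respect to the identity neighbourhood $U = \{0\}$), and the difference set $\Gamma - \Gamma = \Gamma$ is locally finite, so $\Lambda$ has finite local complexity as required. Moreover, as observed in the remark just before Lemma \ref{Lemma_Folner}, when $G$ is a countable discrete group every F\o lner sequence $(F_n)$ in $G$ is automatically a $\{0\}$-adapted strong F\o lner sequence: conditions (i) and (ii) in Subsection \ref{Subsec:Folner} both reduce to trivialities when $V = \{0\}$.

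Under these identifications, the hypothesis that
\[
\varliminf_{n \to \infty} \frac{1}{|F_n|} \sum_{\gamma \in F_n} f(\gamma) > 0
\]
for every F\o lner sequence in $\Gamma$ becomes precisely the hypothesis of Lemma \ref{Lemma_syndfromav} applied to $\Lambda = \Gamma$ and the non-negative bounded function $f$. That lemma then produces a constant $c > 0$ such that the super-level set $\{\gamma \in \Gamma : f(\gamma) \geq c\}$ is syndetic in $\Gamma$, which is exactly the desired conclusion.

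There is essentially no obstacle in this argument; the substantive content is entirely carried by Lemma \ref{Lemma_syndfromav}, and the corollary's only job is to record the natural discrete-group analogue. The only thing to verify is the identification between F\o lner and strong F\o lner sequences in the discrete case, which the paper has already flagged in the remark cited above.
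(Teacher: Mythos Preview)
Your proposal is correct and matches the paper's intent: the corollary is stated immediately after Lemma~\ref{Lemma_syndfromav} with no separate proof, precisely because it is the discrete specialization you describe, with the identification of F\o lner and strong F\o lner sequences already noted in the remark preceding Lemma~\ref{Lemma_Folner}.
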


\section{Furstenberg's Correspondence Principle for uniformly discrete sets in lcsc groups}
\label{subsec:corr}

Furstenberg's Correspondence Principle is a fundamental tool in density Ramsey theory. 
In this subsection we establish a version of this principle for uniformly discrete subsets of a lcsc abelian group. The proof follows the usual route, first outlined by Furstenberg in \cite{F}, but we present it here for completeness as several steps need to be modified due to the (potential) non-discreteness of $G$. We recall that if $P_o \in \mathscr{C}(G)$ is a $U$-uniformly discrete set in $G$ and $\Omega_{P_o}^{\times}$ denotes the punctured hull of $P_o$, then the canonical cross-section $\cT_{P_o} \subset \Omega_{P_o}^{\times}$ satisfies 
\[
\Xi_{\cT_{P_o}} \subseteq \overline{P_o - P_o},
\]
by Lemma \ref{Lemma_TransSep}, and is thus $U$-separated. If $\mu \in \Prob_G(\Omega_{P_o}^{\times})$, then $\mu_{\cT_{P_o}}$ denotes the transverse measure on $\cT_{P_o}$, whose existence is guaranteed by Lemma \ref{Lemma_transmeas}.

\begin{theorem}[Transverse Correspondence Principle]
\label{Thm_CorrPrinciple}
Let $P_o \subset G$ be a uniformly discrete set with positive upper Banach density. Let $\Delta \subset G$ and suppose that $0 \in \Delta$ and there is an open identity neighbourhood $W$ in $G$
such that 
\[
(\overline{P_o - P_o} - \Delta) \cap W = \{0\}.
\]   
Then there exists an ergodic $\mu \in \Prob_G(\Omega_{P_o}^{\times})$, such that for all $r \geq 1$,
\[
d^*(P_o) = \mu_{\cT_{P_o}}(\cT_{P_o}) 
\qand 
d^*\Big(P_o \cap \Big( \bigcap_{k=1}^r (P_o - g_k) \Big)\Big)
\geq 
\mu_{\cT_{P_o}}\Big(\bigcap_{k=1}^r (-g_k).\cT_{P_o}\Big),
\]
for all $g_1,\ldots,g_r \in \Delta$.
\end{theorem}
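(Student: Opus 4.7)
The plan is to mimic Furstenberg's empirical-measure construction, adapted to the Chabauty and transverse setup. Choose a strong F\o lner sequence $(F_n)$ in $G$ realising $\overline{d}_{(F_n)}(P_o) = d^*(P_o)$, form the empirical probabilities
\[
\mu_n = \frac{1}{m_G(F_n)} \int_{F_n} \delta_{g.P_o}\, dm_G(g) \in \Prob(\Omega_{P_o}),
\]
and pass to a weak-$*$ subsequential limit $\mu$, which is $G$-invariant by the strong F\o lner property. To compute $\mu_{\cT_{P_o}}(\cT_{P_o})$, fix a sufficiently small open symmetric $m_G$-Jordan measurable identity neighbourhood $V$ with $\overline{V}-\overline{V} \subseteq U$ (where $P_o$ is $U$-uniformly discrete). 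Lemma \ref{Lemma_Jordan} makes $\mathcal{O}_V$ a $\mu$-continuity set, so $\mu(\mathcal{O}_V) = \lim_n m_G((P_o - V) \cap F_n)/m_G(F_n)$. Using the $U$-uniform discreteness of $P_o$ to sandwich this quantity between $|P_o \cap \bigcap_{v\in V}(F_n+v)|\cdot m_G(V)/m_G(F_n)$ and $|P_o \cap (F_n+V)|\cdot m_G(V)/m_G(F_n)$, the strong F\o lner property and the choice of $(F_n)$ force both to converge to $d^*(P_o)\cdot m_G(V)$. Dividing by $m_G(V)$ then identifies $\mu_{\cT_{P_o}}(\cT_{P_o}) = d^*(P_o)$ via the defining property of the transverse measure (Lemma \ref{Lemma_transmeas}).

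For the tuple inequality, Lemma \ref{Lemma_TransSep} gives $\Xi_{\cT_{P_o}} \subseteq \overline{P_o - P_o}$, so the isolation hypothesis transfers and Lemma \ref{Lemma_g1gr} (after shrinking $V$ so that $V - V \subseteq W$) yields
\[
\mu_{\cT_{P_o}}\Big( \bigcap_{k=1}^r (-g_k).\cT_{P_o} \Big) \;=\; \frac{1}{m_G(V)}\, \mu\Big( \mathcal{O}_V \cap \bigcap_{k=1}^r (-g_k).\mathcal{O}_V \Big).
\]
With $g_0 := 0$, the $\mu_n$-measure of this set equals $m_G\big( \bigcap_{k=0}^r (P_o - g_k + V) \cap F_n \big)/m_G(F_n)$, and the isolation hypothesis $(\overline{P_o-P_o}-\Delta)\cap W = \{0\}$ produces the key rigidity identity
\[
\bigcap_{k=0}^r (P_o - g_k + V) \;=\; \Big( \bigcap_{k=0}^r (P_o - g_k) \Big) + V,
\]
a disjoint union of $V$-neighbourhoods: if $x = p_k - g_k + v_k$ for each $k$, then $(p_k - p_0) - g_k \in (\overline{P_o - P_o} - \Delta)\cap (V - V) = \{0\}$, so $p_k = p_0 + g_k$ and $v_k = v_0$. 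Setting $P_o^- := \bigcap_k (P_o - g_k)$, this bounds $\mu_n$ of the intersection above by $|P_o^- \cap (F_n + V)|\cdot m_G(V)/m_G(F_n)$, and taking $\limsup_n$ yields $\mu_{\cT_{P_o}}(\bigcap_k (-g_k).\cT_{P_o}) \leq d^*(P_o^-)$.

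Finally, to upgrade $\mu$ to an ergodic measure, take the ergodic decomposition $\mu = \int \mu_\omega\, d\rho(\omega)$. Two auxiliary facts combine here. Chabauty upper semicontinuity gives $d^*(P) \leq d^*(P_o)$ for $P \in \Omega_{P_o}$; furthermore, applying the rigidity identity to $P$ (whose difference set still lies in $\overline{P_o - P_o}$) shows that $P^* := P \cap \bigcap_k (P - g_k)$ is contained in a Chabauty subsequential limit of $g_n.P_o^-$, whence $d^*(P^*) \leq d^*(P_o^-)$. On the other hand, the pointwise ergodic theorem (Lindenstrauss) applied to each ergodic $\mu_\omega$ on $\Omega_{P_o}^{\times}$ and to the $\mu_\omega$-continuity sets $\mathcal{O}_V$ and $\mathcal{O}_V \cap \bigcap_k(-g_k).\mathcal{O}_V$ yields $(\mu_\omega)_{\cT_{P_o}}(\cT_{P_o}) \leq d^*(P)$ and the analogous tuple bound for $\mu_\omega$-a.e.\ $P$. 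Combining: $(\mu_\omega)_{\cT_{P_o}}(\cT_{P_o}) \leq d^*(P_o)$ and $(\mu_\omega)_{\cT_{P_o}}(\bigcap_k(-g_k).\cT_{P_o}) \leq d^*(P_o^-)$ for every ergodic component supported on $\Omega_{P_o}^{\times}$. Integrating the first against $\rho$ gives $\int (\mu_\omega)_{\cT_{P_o}}(\cT_{P_o})\,d\rho = \mu_{\cT_{P_o}}(\cT_{P_o}) = d^*(P_o)$, which combined with the pointwise upper bound forces equality $\rho$-a.e., also ruling out escape of mass onto $\{\emptyset\}$. The main obstacle is precisely this last step: reconciling the averaged equality $\mu_{\cT}(\cT) = d^*(P_o)$ with the pointwise ergodic upper bounds while ensuring the selected ergodic component lives on $\Omega_{P_o}^{\times}$; any such $\mu_\omega$ then furnishes the sought ergodic measure.
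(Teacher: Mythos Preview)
Your plan shares the essential ingredients with the paper's proof --- empirical measures, Jordan measurability of $\cO_V$, the rigidity identity $\bigcap_k(P_o - g_k + V) = (\bigcap_k(P_o - g_k)) + V$, and ergodic decomposition --- but it is organized differently, and two steps are genuinely incomplete.

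First, a minor gap: you assume a strong F\o lner sequence with $\overline{d}_{(F_n)}(P_o) = d^*(P_o)$ exists. The supremum defining $d^*$ need not be attained by a single sequence, and the definition of ``strong'' involves an auxiliary neighbourhood $V$ that may vary across approximating sequences. The paper sidesteps this: it only needs $\mu_{\cT_{P_o}}(\cT_{P_o}) \geq d^*(P_o)$ at this stage (Lemma~\ref{lemma2}), obtained by taking a further weak-$*$ limit over measures $\mu_m$ built from sequences $(F_n(m))$ with $\overline{d}_{(F_n(m))}(P_o) \geq d^*(P_o) - 1/m$, and then passing to an ergodic component \emph{immediately}. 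The reverse inequality $\mu_{\cT_{P_o}}(\cT_{P_o}) \leq d^*(P_o)$ is then a special case (take all $g_k = 0 \in \Delta$) of the tuple inequality.

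Second, and more substantively: your ergodicity upgrade rests on the assertions ``Chabauty upper semicontinuity gives $d^*(P) \leq d^*(P_o)$'' and ``$P^*$ is contained in a Chabauty subsequential limit of $g_n.P_o^-$, whence $d^*(P^*) \leq d^*(P_o^-)$''. Neither is immediate; $d^*$ is not upper semicontinuous in any naive sense, and these inequalities are precisely where the work lies. The paper's Lemma~\ref{lemma4} does exactly this, but \emph{locally}: for each bounded $Q$ and small $V_1$ it produces $h \in G$ (depending on $Q$) with $(P' + V_o) \cap Q \subseteq P_o' - h + V_o + V_1$, using the rigidity identity applied to $P$ (valid since $P - P \subseteq \overline{P_o - P_o}$). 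This is then fed into a generic-point argument (Lemma~\ref{lemma3}, mean ergodic theorem plus a subsequence) along a strong F\o lner sequence to manufacture a new strong F\o lner sequence $L_j = F_{n_j} + h_j + V$ along which the density of $P_o^-$ dominates $\mu_{\cT_{P_o}}(\bigcap_k(-g_k).\cT_{P_o})$, up to a factor $m_G(V_o + V_1)/m_G(V_o)$ that is sent to $1$ by shrinking $V_1$. Your plan would need this same machinery, applied to \emph{every} ergodic component rather than a single one chosen in advance; so the paper's ordering --- decompose first, then prove the hard inequality once --- is both cleaner and strictly less work.
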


\begin{remark}
Note that the lower bound is non-trivial only if $\{g_1,\ldots,g_r\} \subset \Xi_{\cT_{P_o}}$, as the right-hand side otherwise equals zero.
\end{remark}

The proof of Theorem \ref{Thm_CorrPrinciple} will be done in several steps. We begin by collecting a few lemmas. In what follows, let $U$ be an open and bounded identity neighbourhood in $G$ and suppose $P_o \subset G$ is a $U$-uniformly discrete set with positive upper Banach density, and let $\Delta$ and $W$ be as above. Note that we may without loss of generality assume that $W \subset U$. Fix two symmetric and open identity neighbourhoods $V_o$ and $V$ in $G$ such that
\[
V_o - V_o + V_o - V_o \subset V \subset W \qand V-V \subset U,
\]
and we require $V$ to be $m_G$-Jordan measurable. Recall from Subsection \ref{subsec:cha} that every open set in $G$ contains an open, 
symmetric and $m_G$-Jordan measurable subset, so the existence of 
such an identity neighbourhood $V$ is clear.

\begin{lemma}
\label{lemma1}
Let $V \subset G$ be a symmetric, open and $m_G$-Jordan measurable set such that $V-V \subset U$. Then, for every strong F\o lner sequence $(F_n)$ in $G$, there exists $\mu \in \Prob_G(\Omega_{P_o})$ such that $\mu(\cO_V) \geq m_G(V) \cdot \overline{d}_{(F_n)}(P_o)$.
\end{lemma}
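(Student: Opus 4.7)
The plan is to realize $\mu$ as a weak-$*$ limit of empirical averages along a carefully chosen F\o lner sequence. By Lemma~\ref{Lemma_Folner}, $(F_n + V)$ is again a strong F\o lner sequence; set
\[
\mu_n := \frac{1}{m_G(F_n + V)} \int_{F_n + V} \delta_{g.P_o} \, dm_G(g) \in \Prob(\Omega_{P_o}).
\]
Since $\Omega_{P_o}$ is compact metrizable, $\Prob(\Omega_{P_o})$ is sequentially weak-$*$ compact, so after passing to a subsequence we obtain $\mu_{n_k} \ra \mu$. A routine estimate bounds $|\int f(g_o.x)\, d\mu_n - \int f\, d\mu_n|$ by $\|f\|_\infty \cdot m_G((F_n+V+g_o) \Delta (F_n+V))/m_G(F_n+V)$, which tends to zero by the F\o lner property; hence $\mu \in \Prob_G(\Omega_{P_o})$.

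Next I would bound $\mu_n(\cO_V)$ from below. Symmetry of $V$ gives $g.P_o = P_o - g \in \cO_V \iff g \in P_o + V$, so $\mu_n(\cO_V) = m_G((F_n + V) \cap (P_o + V))/m_G(F_n + V)$. Because $V - V \subset U$ and $P_o$ is $U$-uniformly discrete, the translates $\{p + V\}_{p \in P_o}$ are pairwise disjoint, and each $p \in P_o \cap F_n$ contributes a full $m_G(V)$ to $(F_n + V) \cap (P_o + V)$. Therefore
\[
\mu_n(\cO_V) \geq m_G(V) \cdot \frac{|P_o \cap F_n|}{m_G(F_n + V)} = m_G(V) \cdot \frac{|P_o \cap F_n|}{m_G(F_n)} \cdot \frac{m_G(F_n)}{m_G(F_n + V)}.
\]
By condition (i) of the strong F\o lner sequence, $m_G(F_n + V)/m_G(F_n) \ra 1$, so (refining the subsequence if needed) $\varlimsup_k \mu_{n_k}(\cO_V) \geq m_G(V) \cdot \overline{d}_{(F_n)}(P_o)$.

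The delicate point is transferring this bound to $\mu(\cO_V)$ itself, since weak-$*$ convergence for the \emph{open} set $\cO_V$ only yields $\mu(\cO_V) \leq \varliminf_k \mu_{n_k}(\cO_V)$, which is the wrong direction. I would instead apply the Portmanteau theorem to the \emph{closed} set $\overline{\cO_V}$, obtaining
\[
\mu(\overline{\cO_V}) \geq \varlimsup_k \mu_{n_k}(\overline{\cO_V}) \geq \varlimsup_k \mu_{n_k}(\cO_V) \geq m_G(V) \cdot \overline{d}_{(F_n)}(P_o),
\]
and then invoke Lemma~\ref{Lemma_Jordan}, which (since $V$ has been chosen so that $\overline{V} - \overline{V} \subset U$) guarantees $\mu(\cO_V) = \mu(\overline{\cO_V})$. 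This open/closed gap is the principal obstacle, and is precisely what motivates the preparatory restrictions placed on $V$ in the paragraph preceding the lemma.
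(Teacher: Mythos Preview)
Your proof is correct and follows essentially the same approach as the paper: form empirical averages over $\widetilde{F}_n = F_n + V$, bound $\mu_n(\cO_V)$ from below via disjointness of the $V$-translates of points of $P_o$, pass to a weak*-limit, and invoke Lemma~\ref{Lemma_Jordan} to handle the open/closed gap. The only cosmetic difference is that the paper phrases the last step as ``$\cO_V$ is $\mu$-Jordan measurable, hence $\mu_{n_k}(\cO_V) \to \mu(\cO_V)$'' rather than going through $\overline{\cO_V}$ and Portmanteau, but these are the same argument.
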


\begin{proof}
Let $(F_n)$ be a strong F\o lner sequence in $G$ and define $\widetilde{F}_n := F_n + V$. Then, by Lemma \ref{Lemma_Folner}, $(\widetilde{F}_n)$ is again a strong F\o lner sequence in $G$. Define $\mu_n \in \Prob(\Omega_{P_o})$ by
\[
\mu_n = \frac{1}{m_G(\widetilde{F}_n)} \int_{\widetilde{F}_n} \delta_{g.P_o} \, dm_G(g).
\]
Note that
\begin{align*}
\mu_n(\cO_V)
&= \frac{1}{m_G(\widetilde{F}_n)} \int_{\widetilde{F}_n} \delta_{g.P_o}(\cO_V) \, dm_G(g) \\[0.2cm]
&= \frac{m_G(\{ g \in \widetilde{F}_n \, : \, (P_o - g) \cap V \neq \emptyset\}}{m_G(\widetilde{F}_n)} \\[0.2cm]
&= \frac{m_G((P_o + V) \cap (F_n+V))}{m_G(F_n +V)},
\end{align*}
where we in the last step have used that $V$ is symmetric. Since $P_o$ is $U$-uniformly discrete and $V-V \subset U$, 
\begin{align*}
(P_o + V) \cap (F_n+V)
&= \bigsqcup_{p \in P_o} \big((p + V) \cap (F_{n} + V)\big) \\[0.2cm]
&\supseteq \bigsqcup_{p \in P_o \cap F_{n}} (p+V),
\end{align*}
and thus
\begin{equation}
\label{OVbnd}
\mu_n(\cO_V) \geq m_G(V) \cdot \frac{|P_o \cap F_n|}{m_G(F_n)} \cdot \frac{m_G(F_n)}{m_G(F_n+V)}, \quad \textrm{for all $n$}.
\end{equation}
Pick a weak*-convergent sub-sequence $(\mu_{n_k})$ with weak*-limit $\mu$. Then $\mu$ is $G$-invariant, and since $V$ is $m_G$-Jordan measurable, $\cO_V$ is $\mu$-Jordan measurable by Lemma \ref{Lemma_Jordan}. In particular, $\mu_{n_k}(\cO_V) \ra \mu(\cO_V)$ as $k \ra \infty$, and since $(F_n)$ is assumed to be a strong F\o lner sequence in $G$, \eqref{OVbnd}, implies that
\[
\mu(\cO_V) \geq m_G(V) \cdot \overline{d}_{(F_n)}(P_o).
\]
\end{proof}

\begin{lemma}
\label{lemma2}
There exists an ergodic $\mu \in \Prob_G(\Omega_{P_o}^{\times})$ such that 
$\mu_{\cT_{P_o}}(\cT_{P_o}) \geq d^*(P_o)$.
\end{lemma}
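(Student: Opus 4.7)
\medskip

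\noindent\emph{Proof proposal.} The plan is to extract $\mu$ as an ergodic component of a weak* accumulation point of the measures produced by Lemma \ref{lemma1}, after first peeling off the potential atom at $\emptyset \in \Omega_{P_o}$. At no point will I need sharper quantitative information than the cylinder bounds already supplied by Lemma \ref{lemma1}.

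First, I fix a symmetric, open, $m_G$-Jordan measurable identity neighbourhood $V \subset G$ with $V - V \subset U$; such a $V$ exists by the discussion preceding Lemma \ref{Lemma_Jordan}. Using $d^*(P_o) = \sup_{(F_n)} \overline{d}_{(F_n)}(P_o)$, for each $k \geq 1$ I pick a strong F\o lner sequence $(F_n^{(k)})$ with $\overline{d}_{(F_n^{(k)})}(P_o) \geq d^*(P_o) - 1/k$ and apply Lemma \ref{lemma1} to produce $\mu_k \in \Prob_G(\Omega_{P_o})$ with $\mu_k(\cO_V) \geq m_G(V)\,(d^*(P_o) - 1/k)$. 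A weak* subsequential limit $\mu$ of $(\mu_k)$ exists in the compact space $\Prob_G(\Omega_{P_o})$, and Lemma \ref{Lemma_Jordan} makes $\cO_V$ into a $\mu$-Jordan measurable set. Combining the portmanteau inequality $\mu(\overline{\cO_V}) \geq \limsup_k \mu_k(\overline{\cO_V}) \geq \limsup_k \mu_k(\cO_V)$ with the identity $\mu(\cO_V) = \mu(\overline{\cO_V})$ then yields $\mu(\cO_V) \geq m_G(V) \cdot d^*(P_o)$. Next, I split $\mu = \mu' + \alpha \,\delta_{\{\emptyset\}}$ with $\mu'$ a $G$-invariant finite Borel measure on $\Omega_{P_o}^{\times}$. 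Since $\emptyset \notin \cO_V$, no mass is lost: $\mu'(\cO_V) = \mu(\cO_V)$. Because $V$ is symmetric, $\cO_V = V.\cT_{P_o}$, so the defining identity of the transverse measure (Lemma \ref{Lemma_transmeas}, applied to the probability normalisation $\tilde\mu := \mu'/\mu'(\Omega_{P_o}^{\times})$) combined with $\mu'(\Omega_{P_o}^{\times}) \leq 1$ gives $(\tilde\mu)_{\cT_{P_o}}(\cT_{P_o}) \geq d^*(P_o)$. I then ergodically decompose $\tilde\mu = \int \tilde\mu_\omega\,d\nu(\omega)$ in $\Prob_G(\Omega_{P_o}^{\times})$; integrating the defining identity of the transverse measure against $\nu$ and invoking its uniqueness in Lemma \ref{Lemma_transmeas} forces $(\tilde\mu)_{\cT_{P_o}}(\cT_{P_o}) = \int (\tilde\mu_\omega)_{\cT_{P_o}}(\cT_{P_o})\,d\nu(\omega)$, so some ergodic component $\tilde\mu_\omega$ must satisfy $(\tilde\mu_\omega)_{\cT_{P_o}}(\cT_{P_o}) \geq d^*(P_o)$, and this is the $\mu$ the lemma demands.

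The step I expect to require the most care is the interaction between the ergodic decomposition and the atom at $\emptyset$: a crude weak* accumulation point in $\Prob_G(\Omega_{P_o})$ may place substantial mass on $\delta_{\{\emptyset\}}$, which is itself ergodic but carries zero transverse mass and would contaminate a direct ergodic decomposition of $\mu$. Splitting off $\delta_{\{\emptyset\}}$ before decomposing, and using that the cylinder $\cO_V$ already avoids $\emptyset$ so no transverse information is sacrificed in the split, is what makes the argument work; the hypothesis $d^*(P_o) > 0$ enters here in an essential way, since it guarantees $\mu' \neq 0$ and thus provides a genuine probability measure to decompose. The other mildly technical point, the weak* passage to the limit, is tamed by insisting at the outset that $V$ be $m_G$-Jordan measurable so that Lemma \ref{Lemma_Jordan} gives the two-sided portmanteau comparison for $\cO_V$.
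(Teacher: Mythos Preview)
Your proof is correct and follows essentially the same route as the paper: apply Lemma~\ref{lemma1} along a sequence of F\o lner sequences approaching $d^*(P_o)$, pass to a weak*-limit using the $\mu$-Jordan measurability of $\cO_V$ from Lemma~\ref{Lemma_Jordan}, and then extract an ergodic component. The one difference worth mentioning, given your explicit concern in the final paragraph, is that the paper avoids the split-and-renormalise step entirely: it takes the ergodic decomposition of the weak*-limit $\nu$ directly on the compact hull $\Omega_{P_o}$, picks an ergodic component $\mu$ with $\mu(\cO_V) \geq \nu(\cO_V) > 0$, and then observes that since $\mu$ is ergodic and $\{\emptyset\}$ is a $G$-fixed point disjoint from $\cO_V$, necessarily $\mu(\{\emptyset\}) = 0$, so $\mu$ already lies in $\Prob_G(\Omega_{P_o}^{\times})$ without any renormalisation.
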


\begin{proof}
For every $m$, pick a strong F\o lner sequence $(F_n(m))$ in $G$ such that
\[
\overline{d}_{(F_n(m))}(P_o) \geq d^*(P_o) - \frac{1}{m}.
\]
By Lemma \ref{lemma1}, we can find $\mu_m \in \Prob_G(\Omega_{P_o})$ such that 
\[
\mu_m(\cO_V) \geq m_G(V) \cdot \Big(d^*(P_o) - \frac{1}{m}\Big).
\]
We can now extract a weak*-convergent sub-sequence $(\mu_{m_k})$ with weak*-limit $\nu$. Note that $\nu$ is $G$-invariant. Since $V$ is $m_G$-Jordan measurable, $\cO_V$ is $\mu$-Jordan measurable by Lemma \ref{Lemma_Jordan} and thus $\nu(\cO_V) \geq m_G(V) \cdot d^*(P_o)$. By \cite[Theorem 8.20]{EW}, we can decompose $\nu$ into ergodic components, 
i.e. there is a Borel probability measure $\sigma$ on $\Prob_G^{\textrm{erg}}(\Omega_{P_o})$ such that
\[
\nu = \int_{\Prob_G^{\textrm{erg}}(\Omega_{P_o})} \mu \, d\sigma(\mu).
\]
In particular, we can find an ergodic $\mu \in \Prob_G(\Omega_{P_o})$ such that
\begin{equation}
\label{lwbndmu}
\mu(\cO_V) \geq \nu(\cO_V) \geq m_G(V) \cdot d^*(P_o).
\end{equation}
Since we assume that $d^*(P_o) > 0$, we conclude that $\mu \neq \delta_{\{\emptyset\}}$. In particular, since $\mu$ is ergodic and $\emptyset$ is a fixed point for the $G$-action, $\mu(\{\emptyset\}) = 0$, and thus $\mu \in \Prob_G(\Omega_{P_o}^{\times})$, so the transverse measure $\mu_{\cT_{P_o}}$ of $\mu$ is well-defined, and $\mu(\cO_V) = m_G(V) \cdot \mu_{\cT_{P_o}}(\cT_{P_o})$.
In particular, \eqref{lwbndmu} implies that $\mu_{\cT_{P_o}}(\cT_{P_o}) \geq d^*(P_o)$. 
\end{proof}

\begin{lemma}
\label{lemma3}
Let $(F_n)$ be a strong F\o lner sequence in $G$ and suppose $\mu \in \Prob_G(\Omega_{P_o}^{\times})$ is ergodic. Let $g_1,\ldots,g_r \in \Delta$. 
Then, for $\mu$-almost every $P \in \Omega_{P_o}^{\times}$, there exists a 
sub-sequence $(F_{n_j})$ such that
\[
\lim_{j \ra \infty}
\frac{m_G\big(\big(P \cap \big(\bigcap_{k=1}^r (P-g_k)\big)+V_o\big) \cap F_{n_j}\big)}{m_G(F_{n_j})}
= m_G(V_o) \cdot \mu_{\cT_{P_o}}\big(\bigcap_{k=1}^r (-g_k).\cT_{P_o}\big).
\]
\end{lemma}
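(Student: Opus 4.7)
The plan is to recognise the expression under the limit as a Ces\`aro-type ergodic average along $(F_n)$ of a single bounded Borel function on $\Omega_{P_o}^{\times}$, invoke the $L^2$ mean ergodic theorem for the ergodic action of the amenable group $G$ on $(\Omega_{P_o}^{\times},\mu)$, and then extract an a.e.-convergent sub-sequence.

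First I would set $A := \cT_{P_o} \cap \bigcap_{k=1}^r (-g_k).\cT_{P_o}$, a closed (hence Borel) subset of $\cT_{P_o}$, and introduce the bounded Borel function $F := \chi_{V_o.A}$ on $\Omega_{P_o}^{\times}$. Every $Q \in \Omega_{P_o}^{\times}$ is $U$-uniformly discrete by Lemma \ref{Lemma_TransSep}, and $V_o - V_o \subset V \subset U$; together with the symmetry of $V_o$, a direct unfolding of $V_o.A$ using the convention $g.P = P-g$ yields
\[
F(Q) = 1 \iff \textrm{there is a (unique) } p \in Q \cap V_o \textrm{ with } p + g_k \in Q \textrm{ for every } k.
\]
Substituting $Q = g.P = P - g$ and reindexing by $p' := p + g$ turns this into
\[
F(g.P) = \chi_{(P \cap \bigcap_{k=1}^r (P - g_k)) + V_o}(g),
\]
whence
\[
\int_{F_n} F(g.P) \, dm_G(g) = m_G\!\left( \left( \left( P \cap \bigcap_{k=1}^r (P - g_k) \right) + V_o \right) \cap F_n \right),
\]
so the quantity inside the limit is exactly the Ces\`aro average $\frac{1}{m_G(F_n)} \int_{F_n} F(g.P) \, dm_G(g)$.

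Next I would compute $\int F \, d\mu$. Since $V_o - V_o \subset U$ and $A \subset \cT_{P_o}$, Lemma \ref{Lemma_transmeas} gives $\mu(V_o.A) = m_G(V_o) \cdot \mu_{\cT_{P_o}}(A)$, and $\mu_{\cT_{P_o}}(A) = \mu_{\cT_{P_o}}(\bigcap_{k=1}^r (-g_k).\cT_{P_o})$ because $\mu_{\cT_{P_o}}$ is supported on $\cT_{P_o}$. The classical $L^2$ mean ergodic theorem for the ergodic amenable action along the strong F\o lner sequence $(F_n)$ then guarantees that these averages converge in $L^2(\mu)$, hence in $L^1(\mu)$, hence in measure, to the constant $m_G(V_o) \cdot \mu_{\cT_{P_o}}(\bigcap_{k=1}^r (-g_k).\cT_{P_o})$, so standard sub-sequence extraction provides an $(F_{n_j})$ along which pointwise convergence holds for $\mu$-a.e.\ $P$. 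The only step that genuinely requires care is the identification of $F(g.P)$ with the indicator (in $g$) of $(P \cap \bigcap_k (P - g_k)) + V_o$; this is a direct bookkeeping argument resting solely on the uniform discreteness of $P$ and the symmetry of $V_o$, and everything else is the standard dictionary between $\mu$ and the transverse measure $\mu_{\cT_{P_o}}$ combined with the classical mean ergodic theorem.
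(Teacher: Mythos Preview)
Your proposal is correct and follows the same three-step outline as the paper: identify the expression as the ergodic average of the indicator of a fixed Borel set, apply the $L^2$ mean ergodic theorem for the ergodic $G$-action, and extract an a.e.\ convergent sub-sequence.

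There is one small but genuine difference worth recording. The paper works with the open set $A = \cO_{V_o} \cap \bigcap_{k=1}^r (-g_k).\cO_{V_o}$, i.e.\ the intersection of the \emph{thickened} cross-sections, and then must prove the set identity
\[
(P+V_o) \cap \bigcap_{k=1}^r (P+V_o-g_k) \;=\; \Big(P \cap \bigcap_{k=1}^r (P-g_k)\Big) + V_o,
\]
which is exactly the content of Lemma~\ref{Lemma_g1gr} and uses the hypothesis $(\overline{P_o-P_o}-\Delta)\cap W=\{0\}$. You instead take $A = \cT_{P_o} \cap \bigcap_{k=1}^r (-g_k).\cT_{P_o}$ inside the cross-section and thicken \emph{afterwards} to $V_o.A$. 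With this choice the orbit identification $F(g.P)=\chi_{(P\cap\bigcap_k(P-g_k))+V_o}(g)$ is an immediate unravelling of definitions (using only the symmetry of $V_o$), and the measure identity $\mu(V_o.A)=m_G(V_o)\,\mu_{\cT_{P_o}}(A)$ is just Lemma~\ref{Lemma_transmeas}. In effect your ordering sidesteps the need to invoke Lemma~\ref{Lemma_g1gr} or the $\Delta$-hypothesis at this particular point; the paper's route makes the same computation but packages the non-trivial inclusion into that earlier lemma. The two sets are of course equal under the standing hypotheses, so the arguments are interchangeable.
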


\begin{proof}
Define the (possibly empty) open set $A \subset \Omega_{P_o}^{\times}$ by
\[
A := \cO_{V_o} \cap \Big( \bigcap_{k=1}^r (-g_k).\cO_{V_o} \Big).
\]
Lemma \ref{Lemma_g1gr} (applied to $B = \cT_{P_o}$, and using that $V_o$ is symmetric) implies that
\[
\mu(A) = m_G(V_o) \cdot \mu_{\cT_{P_o}}\Big(\cT_{P_o} \cap \Big(\bigcap_{k=1}^r (-g_k).\cT_{P_o}\Big)\Big).
\]
Since $\mu$ is ergodic and $(F_n)$ is F\o lner, the mean ergodic theorem tells us that
\[
\lim_{n \ra \infty} \frac{1}{m_G(F_n)} \int_{F_n} \chi_{A}(g.  \, \cdot) \, dm_G(g)
= \mu(A),
\]
in the norm topology on $L^2(\Omega_{P_o}^{\times},\mu)$. We can thus extract  a sub-sequence $(F_{n_j})$ along which the convergence is $\mu$-almost sure.
Hence, for $\mu$-almost every $P \in \Omega_{P_o}^{\times}$,
\[
\lim_{j \ra \infty} \frac{m_G(\{ g \in F_{n_j} \, : \, P - g \in A\})}{m_G(F_{n_j})} 
= \mu(A).
\] 
Note that since $V_o$ is symmetric, 
\begin{align*}
&\{ g \in F_{n_j} \, : \, P - g \in A\} \\[0.2cm]
&=
\{g \in F_{n_j} \, : \, (P - g) \cap V_o \neq \emptyset, \enskip (P-g-g_k) \cap V_o \neq \emptyset, \enskip \textrm{for all $k=1,\ldots,r$}\} \\
&=
(P+V_o) \cap \big( \bigcap_{k=1}^r (P + V_o - g_k) \big) \cap F_{n_j},
\end{align*}
for all $j$. It thus suffices to show that
\[
(P+V_o) \cap \big( \bigcap_{k=1}^r (P + V_o - g_k) \big) 
= \big(P \cap \big( \bigcap_{k=1}^r (P-g_k)\big) + V_o \big).
\]
To do this, first note the $\supseteq$-inclusion is trivial, so it is enough to prove
\begin{equation}
\label{incllr}
(P+V_o) \cap \big( \bigcap_{k=1}^r (P + V_o - g_k) \big) 
\subseteq \big(P \cap \big( \bigcap_{k=1}^r (P-g_k)\big) + V_o \big).
\end{equation}
We may without loss of generality assume that the set on the left-hand side is non-empty, and pick $g$ in this set, allowing us to write
\[
g = p_o + v_o = p_1 + v_1 - g_1 = \ldots = p_r + v_r - g_r,
\]
for some $p_o,\ldots,p_r \in P$ and $v_o,v_1,\ldots,v_r \in V_o$. Then, by Lemma \ref{Lemma_TransSep},
\[
p_k - p_o - g_k = v_o - v_k \in (P-P - \Delta) \cap (V_o-V_o) \subset (\overline{P_o - P_o} - \Delta) \cap W = \{0\},
\]
for all $k$, and thus $v_k = v_o$ and $p_o = p_k - g_k$ for all $k$. In particular, 
\[
g = p_o + v_o \in P \cap \big( \bigcap_{k=1}^r (P-g_k)\big) + V_o.
\]
Since $g$ is arbitrary, this proves \eqref{incllr}, and we are done.
\end{proof}

\begin{lemma}
\label{lemma4}
Let $P \in \Omega_{P_o}^{\times}$ and let $V_1 \subset V_o$ be an open identity neighbourhood. Then, for all $g_1,\ldots,g_r \in \Delta$ and for every 
pre-compact set $Q \subset G$, there exists $h \in G$ such that
\[
\big(P \cap \big( \bigcap_{k=1}^r (P-g_k)\big) + V_o\big) \cap Q
\subseteq P_o \cap \big( \bigcap_{k=1}^r (P_o-g_k)\big) - h + V_o + V_1.
\]
\end{lemma}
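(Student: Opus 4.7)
The plan is to take $h := g_n$ for $n$ large along any sequence $(g_n) \subset G$ with $P_o - g_n \to P$ in the Chabauty topology. The substance of the argument lies in upgrading the approximate incidences supplied by Chabauty convergence to exact ones; this is where I expect the main obstacle to sit, and it will be resolved precisely by the separation hypothesis $(\overline{P_o - P_o} - \Delta) \cap W = \{0\}$ (combined with the choice of symmetric $V \subset W$ from the preceding setup).

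The first step is to reduce to a finite problem. Writing an arbitrary element of the left-hand side as $x = p + v_o$ with $p \in P \cap \bigcap_k (P - g_k)$, $v_o \in V_o$, and $x \in Q$, the centre $p = x - v_o$ lies in the pre-compact set $Q - V_o$. Uniform discreteness of $P$, guaranteed by Lemma \ref{Lemma_TransSep}, makes
\[
F := P \cap (Q - V_o) \cap \bigcap_{k=1}^r (P - g_k)
\]
finite. For each $p \in F$ and each $k \in \{0,1,\ldots,r\}$ (with the convention $g_0 := 0$), property (ii) of Chabauty convergence furnishes a sequence $q_n^{(p,k)} \in P_o - g_n$ with $q_n^{(p,k)} \to p + g_k$; writing $p_n^{(p)} := q_n^{(p,0)}$, I fix $n \geq N$ large enough so that both $p_n^{(p)} - p \in V_1$ and $(p_n^{(p)} + g_k) - q_n^{(p,k)} \in V$ hold for every $p \in F$ and every $k \in \{1,\ldots,r\}$ simultaneously.

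The crux is the rigidity step. For fixed $p \in F$ and $k$, the element $a := q_n^{(p,k)} - p_n^{(p)}$ belongs to $(P_o - g_n) - (P_o - g_n) = P_o - P_o$, while symmetry of $V$ together with the bound above gives $a - g_k \in V \subset W$. Since $g_k \in \Delta$ and $a \in \overline{P_o - P_o}$, the hypothesis forces $a = g_k$, so $q_n^{(p,k)} = p_n^{(p)} + g_k \in P_o - g_n$. Varying $k$, this shows
\[
p_n^{(p)} \in (P_o - g_n) \cap \bigcap_{k=1}^r \big( (P_o - g_n) - g_k \big) = \Big( P_o \cap \bigcap_{k=1}^r (P_o - g_k) \Big) - g_n.
\]
Setting $h := g_n$ and using $p - p_n^{(p)} \in V_1$, the decomposition
\[
x = p_n^{(p)} + (p - p_n^{(p)}) + v_o \in \Big( P_o \cap \bigcap_{k=1}^r (P_o - g_k) \Big) - h + V_o + V_1
\]
yields the desired inclusion.
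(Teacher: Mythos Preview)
Your proof is correct and follows essentially the same approach as the paper: reduce to finitely many centres by uniform discreteness, approximate each via Chabauty convergence, and use the separation hypothesis $(\overline{P_o-P_o}-\Delta)\cap W=\{0\}$ to rigidify. The only organisational difference is that you apply the separation step directly to the approximants $q_n^{(p,k)}-p_n^{(p)}$, whereas the paper first lands in $\bigcap_k (P_o-g_k+V_o+V_1)-h$ and then proves the set identity $\bigcap_k (P_o-g_k+V_o+V_1)=\big(\bigcap_k(P_o-g_k)\big)+V_o+V_1$ via separation; your route is slightly more direct. One cosmetic point: you fix $n$ so that $p_n^{(p)}-p\in V_1$ but then invoke $p-p_n^{(p)}\in V_1$ in the last line; since $V_1$ is not assumed symmetric, simply choose $n$ so that $p-p_n^{(p)}\in V_1$ (which is equally available, as $p_n^{(p)}\to p$).
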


\begin{proof}
Set $g_o = 0$, and define
\[
P' := \bigcap_{k=0}^r (P-g_k) \qand P_o' := \bigcap_{k=0}^r (P_o - g_k).
\]
Fix a pre-compact set $Q \subset G$. If $(P' + V_o) \cap Q$ is empty, the inclusion in the lemma is trivial, so we assume that this set is non-empty. Since $P$ is $U$-uniformly discrete, we can write
\[
(P' + V_o) \cap Q = \bigsqcup_{j=1}^q \big((p_j + V_o) \cap Q\big)
\]
for some $q \geq 1$ and $p_1,\ldots,p_q \in P'$. Furthermore, 
for all $k = 0,\ldots,r$, we can find elements $p_{j,k} \in P$ such that 
$p_j = p_{j,k} - g_k$ for all $j=1,\ldots,q$. Let us now fix a sequence $(h_n)$ in $G$
such that $h_n.P_o = P_o - h_n \ra P$ as $n \ra \infty$. Then, by (ii) in the characterization of sequential convergence in the Chabauty topology in Subsection \ref{subsec:cha}, we 
can find, for all $j$ and $k$, an integer $n_{j,k}$ such that for all $n \geq n_{j,k}$,
there exist elements $p^o_{j,k}(n) \in P_o$ and $v_{j,k}(n) \in V_1$ such that
\[
p_{j,k} = p^o_{j,k}(n) - h_n + v_{j,k}(n).
\]
Let $n_o := \max_{j,k} n_{j,k}$. Then, for all $n \geq n_o$ and $v_o,\ldots,v_r \in V_o$,
\[
p_j + v_j = p_{j,k} - g_k + v_j \in P_o - (g_k+h_n) + V_o + V_1 , \quad \textrm{for all $k=0,1,\ldots,r$},
\]
and thus (since $v_o,\ldots,v_r \in V_o$ are arbitrary), 
\[
(P' + V_o) \cap Q \subset \bigcap_{k=0}^r \big(P_o - g_k + V_o + V_1\big) - h_n, \quad \textrm{for all $n \geq n_o$}.
\]
Let $h = h_n$ for some $n \geq n_o$. Then to prove the lemma, it suffices to show that
\[
\bigcap_{k=0}^r \big(P_o - g_k + V_o + V_1\big) = \bigcap_{k=0}^r
\big(P_o - g_k\big) + V_o + V_1.
\]
The $\supseteq$-inclusion is trivial, so we only need to establish the other inclusion. 
To do this, pick $g$ in the set on the left-hand side, and write
\[
g = p_k - g_k + v_k, \quad \textrm{for $k=0,\ldots,r$},
\]
and $p_o,\ldots,p_r \in P_o$ and $v_0,\ldots,v_r \in V_o + V_1$. Hence, since $V_1 \subset V_o$ and $V_o + V_o - V_o - V_o \subset W$, it follows from Lemma \ref{Lemma_TransSep},
\[
p_k - p_o - g_k = v_o - v_k \in (P - P - \Delta) \cap (V_o + V_1 - V_o - V_1)
\subset (\overline{P_o - P_o} - \Delta) \cap W = \{0\},
\]
for all $k=1,\ldots,r$. We conclude that $v_k = v_o$ and $p_o = p_k - g_k$ for all 
$k$, and thus $p_o \in P_o'$ and $g = p_o + v_o \in P_o' + V_o + V_1$, which finishes the proof.
\end{proof}

\subsubsection*{\textbf{\emph{Proof of Theorem \ref{Thm_CorrPrinciple}}}}

Let the sets $P_o, \Delta, W, V_o, V$ and $U$ be as above. By Lemma \ref{lemma2}, there is at least one ergodic $\mu \in \Prob_G(\Omega_{P_o}^{\times})$ such that $\mu_{\cT_{P_o}}(\cT_{P_o}) \geq d^*(P_o)$. We want to show that
\begin{equation}
\label{wow}
d^*\Big(P_o \cap \Big( \bigcap_{k=1}^r (P_o - g_k) \Big)\Big)
\geq 
\mu_{\cT_{P_o}}\Big(\bigcap_{k=1}^r (-g_k).\cT_{P_o}\Big),
\end{equation}
for all $g_1,\ldots,g_r \in \Delta$. In particular, since $0 \in \Delta$, we
conclude from \eqref{wow} that $d^*(P_o) \geq \mu_{\cT_{P_o}}(\cT_{P_o})$, and thus $d^*(P_o) = \mu_{\cT_{P_o}}(\cT_{P_o})$. \\

To prove \eqref{wow}, fix $g_1,\ldots,g_r \in \Delta$. Moreover, fix a strong F\o lner sequence $(F_n)$
in $G$ and an open identity neighbourhood $V_1 \subset V_o$. Define $\widetilde{F}_n = F_n + V_o + V_1$, and note that $(\widetilde{F}_n)$ is a strong F\o lner sequence by Lemma \ref{Lemma_Folner}. Hence, by Lemma \ref{lemma3}, we can find a point $P \in \Omega_{P_o}^{\times}$ and a sub-sequence $(\widetilde{F}_{n_k})$ such that
\[
\lim_{j \ra \infty}
\frac{m_G\big(\big(P \cap \big(\bigcap_{k=1}^r (P-g_k)\big)+V_o\big) \cap \widetilde{F}_{n_j}\big)}{m_G(\widetilde{F}_{n_j})}
= m_G(V_o) \cdot \mu_{\cT_{P_o}}\big(\bigcap_{k=1}^r (-g_k).\cT_{P_o}\big).
\]
For every $j$, we can apply Lemma \ref{lemma4} to find $h_j \in G$ such that
\begin{align*}
&\frac{m_G\big(\big(P \cap \big(\bigcap_{k=1}^r (P-g_k)\big)+V_o\big) \cap \widetilde{F}_{n_j}\big)}{m_G(\widetilde{F}_{n_j})} \\[0.2cm]
&\leq \frac{m_G\big(\big(P_o \cap \big(\bigcap_{k=1}^r (P_o-g_k)\big) - h_j + V_o + V_1 \big) \cap \widetilde{F}_{n_j}\big)}{m_G(\widetilde{F}_{n_j})} \\[0.2cm]
&= 
\frac{m_G\big(\big(P_o \cap \big(\bigcap_{k=1}^r (P_o-g_k)\big)+V_o + V_1 \big) \cap (F_{n_j} + V_o + V_1 + h_j) \big)}{m_G(F_{n_j} + V_o + V_1 + h_j)}.
\end{align*}
Since 
\[
(V_o + V_1) - (V_o + V_1) \subset V_o + V_o - V_o - V_o \subset V,
\]
Lemma \ref{Lemma_UQV} (applied to the $U$-uniformly discrete set $P_o \cap \big(\bigcap_{k=1}^r (P_o-g_k)\big)$ and the bounded set $Q = F_{n_j} + h_j$) yields, 
\begin{align*}
&m_G\big(\big(P_o \cap \big(\bigcap_{k=1}^r (P_o-g_k)\big)+V_o + V_1 \big) \cap (F_{n_j} + V_o + V_1 + h_j) \big) \\[0.2cm]
&\leq m_G(V_o + V_1) \cdot |P_o \cap \big(\bigcap_{k=1}^r (P_o-g_k)\big) \cap (F_{n_j} + h_j + V)|.
\end{align*}
Set $L_j = F_{n_j} + h_j + V$, and note that Lemma \ref{Lemma_Folner} again guarantees that $(L_j)$ is a strong F\o lner sequence in $G$. Then,
\begin{align*}
&\mu_{\cT_{P_o}}\big(\bigcap_{k=1}^r (-g_k).\cT_{P_o}\big)
\\[0.2cm]
&\leq \varlimsup_{j \ra \infty} \frac{m_G(V_o + V_1)}{m_G(V_o)} \cdot \frac{|P_o \cap \big(\bigcap_{k=1}^r (P_o-g_k)\big) \cap L_j|}{m_G(L_j)} \cdot \frac{m_G(L_j)}{m_G(F_{n_j} + V_o + V_1 + h_j)} \\[0.2cm]
&\leq \frac{m_G(V_o + V_1)}{m_G(V_o)} \cdot \overline{d}_{(L_j)}(P_o \cap \big(\bigcap_{k=1}^r (P_o-g_k)\big)) \leq 
\frac{m_G(V_o + V_1)}{m_G(V_o)} \cdot d^*(P_o \cap \big(\bigcap_{k=1}^r (P_o-g_k)\big)),
\end{align*}
where we in the second to last step have used that $(L_j)$ is a strong F\o lner sequence (and thus the last factor on the middle line tends to $1$ as $j \ra \infty$).
Note that this inequality holds for any open set $V_1 \subset V_o$. In particular, the quotient $\frac{m_G(V_o + V_1)}{m_G(V_o)}$ can be made arbitrarily close to $1$ by taking $V_1$ small enough. We conclude that 
\[
\mu_{\cT_{P_o}}\big(\bigcap_{k=1}^r (-g_k).\cT_{P_o}\big) \leq  d^*(P_o \cap \big(\bigcap_{k=1}^r (P_o-g_k)\big)), 
\]
and thus \eqref{wow} is proved.

\section{Furstenberg-Katznelson Theorem for lcsc abelian groups}
\label{Sec:FK}
Let $G$ be a locally compact and second countable (lcsc) abelian group. If $(Z,\theta)$
is a standard Borel probability measure space and $a, b : G \times Z \ra Z$ are two Borel $G$-actions on $Z$, we say that they \emph{commute} if
\[
a(g_1,b(g_2,z)) = b(g_2,a(g_1,z)), \quad \textrm{for all $g_1,g_2 \in G$ and $z \in Z$}.
\]
For countable abelian groups, the following theorem is due to Furstenberg and 
Katznelson (\cite[Theorem B]{FK1} and \cite[Secion 3]{FK2}) and Austin 
\cite[Theorem B]{austin}. We show below that the general case
follows from a simple approximation argument. 

\begin{theorem}
\label{Thm_SyndMultRecG}
Let $(Z,\theta)$ be a standard Borel probability measure space and let $a_1,\ldots,a_r$ be commuting 
$\theta$-preserving Borel $G$-actions on $Z$. Then, for every $\theta$-measurable set $A \subset Z$
with positive $\theta$-measure, there exists $c > 0$ such that the set
\[
S := \Big\{ g \in G \, : \, 
\theta\Big(A \cap \Big( \bigcap_{k=1}^r a_k(g,A) \Big)\Big) \geq c \Big\}
\]
is syndetic in $G$. In particular, for every strong F\o lner sequence $(F_n)$ in $G$,
\[
\varliminf_{n \ra \infty} \frac{1}{m_G(F_n)} \int_{F_n} \theta\Big(A \cap \Big( \bigcap_{k=1}^r a_k(g,A) \Big)\Big) \, dm_G(g) > 0.
\]
\end{theorem}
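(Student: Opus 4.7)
The plan is to reduce the general lcsc abelian case to the countable case (where the result is due to Furstenberg--Katznelson \cite{FK1,FK2} and Austin \cite{austin}) via a density argument combined with uniform continuity of the relevant correlation function.

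First, since $G$ is second countable, I would pick a countable dense subgroup $\Gamma < G$. The restrictions of $a_1,\ldots,a_r$ to $\Gamma$ are commuting, $\theta$-preserving Borel $\Gamma$-actions on $Z$, so the countable-group versions of Furstenberg--Katznelson and Austin produce a constant $c > 0$ and a finite set $F_0 \subset \Gamma$ such that $\Gamma \subset S_\Gamma + F_0$, where
\[
S_\Gamma := \Big\{ \gamma \in \Gamma \, : \, \theta\Big(A \cap \bigcap_{k=1}^r a_k(\gamma, A)\Big) \geq c \Big\}.
\]

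Next I would show that the function $f(g) := \theta(A \cap \bigcap_{k=1}^r a_k(g, A))$ is \emph{uniformly} continuous on $G$. For each $k$, the associated Koopman operators $U_g^{(k)}$ on $L^2(Z, \theta)$ are strongly continuous in $g$; this is a classical fact about Borel actions of lcsc groups on standard probability spaces. Writing $f(g) = \int_Z \chi_A \prod_{k=1}^r (U_g^{(k)} \chi_A) \, d\theta$ and applying a telescoping identity together with the measure-preservation of $U_g^{(k)}$, I obtain a $g$-independent bound
\[
|f(g+h) - f(g)| \leq \sum_{k=1}^r \|U_h^{(k)} \chi_A - \chi_A\|_{L^1(\theta)},
\]
whose right-hand side vanishes as $h \to 0$. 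Choosing an open identity neighbourhood $V \subset G$ on which this quantity is strictly below $c/2$ yields $S_\Gamma + V \subset S := \{g \in G : f(g) \geq c/2\}$. Density of $\Gamma$ implies $\Gamma + V = G$, and therefore
\[
G = \Gamma + V \subset S_\Gamma + F_0 + V \subset S + F_0,
\]
showing that $S$ is syndetic in $G$ with the finite translate set $F_0$.

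For the ``in particular'' clause, the cover $G \subset S + F_0$ forces $\sum_{x \in F_0} \chi_S(g - x) \geq 1$ for all $g \in G$. Integrating over a strong F\o lner set $F_n$ and using property (i) of a strong F\o lner sequence (which gives $m_G(F_n \Delta (F_n - x)) = o(m_G(F_n))$ for each fixed $x$) produces $\varliminf_n m_G(S \cap F_n)/m_G(F_n) \geq 1/|F_0|$. Since $f \geq (c/2)\chi_S$, this yields the desired positive liminf of the averages. The main technical step is the uniform continuity of $f$: it rests on the strong continuity of the Koopman representation for Borel actions of lcsc groups, a classical but non-trivial ingredient; once it is in hand, the rest is a routine density-and-finite-cover transfer requiring no new input beyond the countable case.
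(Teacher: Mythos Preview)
Your proposal is correct and follows essentially the same route as the paper: pick a countable dense subgroup $\Gamma < G$, invoke the countable-group multiple recurrence theorem (Austin) to get a syndetic level set $S_\Gamma \subset \Gamma$, prove uniform continuity of $g \mapsto \theta(A \cap \bigcap_k a_k(g,A))$ via a telescoping estimate and strong continuity of the Koopman representation, and thicken $S_\Gamma$ by a small identity neighbourhood to obtain a syndetic level set in $G$. The only cosmetic differences are that the paper separates the step ``positive liminf of averages over $\Gamma$ $\Rightarrow$ syndetic level set in $\Gamma$'' into its own corollary (your citation of Furstenberg--Katznelson/Austin tacitly bundles this standard argument), and you spell out the ``in particular'' clause explicitly while the paper leaves it implicit.
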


\subsection{Proof of Theorem \ref{Thm_SyndMultRecG}}

Let $a_1,\ldots,a_r$ be commuting $\theta$-preserving Borel $G$-actions on $Z$, and 
let $A \subset Z$ be a $\theta$-measurable set with positive $\theta$-measure. Since
$G$ is locally compact and second countable, it is separable, and thus we can find 
a dense countable subgroup $\Gamma < G$. The following theorem is due to Austin \cite[Theorem B]{austin}.

\begin{theorem}
\label{Thm_austin}
Let $(Z,\theta)$ be a standard Borel probability measure space and let $a_1,\ldots,a_r$ be commuting $\theta$-preserving Borel $\Gamma$-actions on $Z$. Then, for every $\theta$-measurable set $A \subset Z$ with positive $\theta$-measure and for every F\o lner sequence $(F_n)$ in $\Gamma$, 
\[
\varliminf_{n \ra \infty} \frac{1}{|F_n|} \sum_{\gamma \in F_n} 
\theta\Big(A \cap \Big( \bigcap_{k=1}^r a_k(\gamma,A) \Big)\Big)  > 0.
\]
\end{theorem}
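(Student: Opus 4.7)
The plan is to follow Austin's pleasant/sated-extension framework \cite{austin}, which adapts the Furstenberg--Katznelson multiple recurrence argument to commuting actions of a general countable abelian group $\Gamma$. The first move is a reduction: since the $a_k$ pairwise commute, they jointly define a $\Gamma^r$-action on $Z$; decomposing $\theta$ into ergodic components under this action and applying Fatou reduces us to the case in which appropriate ergodicity hypotheses hold (for the full $\Gamma^r$-action and for the relevant Furstenberg self-joinings). Next I would encode the averages as weak-$\ast$ accumulation points of self-joinings on $Z^{r+1}$: setting
\[
J_n = \frac{1}{|F_n|}\sum_{\gamma\in F_n}\bigl(\mathrm{id},\, a_1(\gamma,\cdot),\ldots,a_r(\gamma,\cdot)\bigr)_{\ast}\theta \in \Prob(Z^{r+1}),
\]
any weak-$\ast$ accumulation point $J$ is invariant under the $\Gamma$-action $\gamma\cdot(z_0,z_1,\ldots,z_r)=(z_0,a_1(\gamma,z_1),\ldots,a_r(\gamma,z_r))$ on $Z^{r+1}$, and (after a harmless $\gamma\mapsto -\gamma$ change of variables, using that each $a_k$ preserves $\theta$) the positive-liminf conclusion reduces to the strictly positive lower bound $J(A^{r+1}) > 0$ for some accumulation point.

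The heart of the argument is a van der Corput induction. Applied $r$ times to functions of the form $\mathbf{1}_A\circ a_k(\gamma,\cdot)$, it produces, for each slot $k$, a $\Gamma$-invariant characteristic sub-$\sigma$-algebra $\cZ_k\subseteq Z$ with the property that replacing the $k$-th indicator by its conditional expectation onto $\cZ_k$ affects the multilinear average only by an arbitrarily small $L^2$-error. In the commuting abelian setting each $\cZ_k$ is, inductively, an isometric (Abramov) extension of the $\Gamma$-invariant factor, obtained by adjoining finite-dimensional unitary cocycle data built from iterated differences of the $a_j$'s. At this point a generic obstruction appears: the projections onto the various $\cZ_k$ need not be mutually compatible, so one cannot yet directly reduce the multilinear average to a single compact fibration.

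Austin resolves this by passing to an extension $\pi:(\widetilde Z,\widetilde\theta)\to(Z,\theta)$ of $\Gamma^r$-systems which is simultaneously \emph{sated} for all of the characteristic $\sigma$-algebras constructed above. On $\widetilde Z$ the conditional expectation of $\mathbf{1}_{\pi^{-1}A}$ onto each $\pi^{-1}\cZ_k$ factors through a common tower of compact abelian isometric extensions of the invariant factor, and the limiting joining $J$ can be described as a product measure fibred over this invariant factor. A disintegration-plus-H\"older estimate on each fibre then produces a constant $c=c(\theta(A),r)>0$ with
\[
\varliminf_{n\to\infty}\frac{1}{|F_n|}\sum_{\gamma\in F_n}\theta\Bigl(A\cap\bigcap_{k=1}^r a_k(\gamma,A)\Bigr)\geq c,
\]
and Corollary \ref{cor_syndav} upgrades this positive average into the desired syndeticity of $\{\gamma\in\Gamma\, :\, \theta(A\cap\bigcap_k a_k(\gamma,A))\geq c/2\}$.

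The main obstacle is the sated-extension step: producing, for the tower of characteristic $\sigma$-algebras arising from $r$ different commuting $\Gamma$-actions, a single extension on which all of the relevant conditional expectations align in a way compatible with the van der Corput bookkeeping. Every other ingredient (Hilbert-space van der Corput, positivity on a compact abelian fibration, the passage from positive Ces\`aro averages to syndeticity) is comparatively standard once the sated extension is in hand.
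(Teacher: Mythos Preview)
The paper does not prove Theorem~\ref{Thm_austin} at all: it is quoted verbatim as Austin's result \cite[Theorem~B]{austin} and used as a black box in the proof of Theorem~\ref{Thm_SyndMultRecG}. So there is no ``paper's own proof'' to compare against; your proposal is effectively a sketch of Austin's argument rather than something the present paper supplies.

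Two remarks on the sketch itself. First, your final paragraph drifts past the statement: Theorem~\ref{Thm_austin} asserts only the positive $\varliminf$ of the Ces\`aro averages, not syndeticity of a level set. The upgrade via Corollary~\ref{cor_syndav} is exactly what the paper does \emph{afterwards}, in deriving Theorem~\ref{Thm_SyndMultRecG} from Theorem~\ref{Thm_austin}; it is not part of the theorem you were asked to prove. Second, the claim that the lower bound takes the form $c=c(\theta(A),r)$ is stronger than what Austin's Theorem~B actually delivers (and stronger than what is needed here): the sated-extension argument gives strict positivity of the limit, but the constant in general depends on the system and on $A$, not merely on $\theta(A)$ and $r$. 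Apart from these two points, your outline of the pleasant/sated-extension machinery is a reasonable high-level summary of \cite{austin}, though of course each of the steps you list (the joining description of limit averages, the inductive identification of characteristic factors as towers of isometric extensions, and especially the existence of a simultaneous sated extension) is a substantial piece of work in its own right.
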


\noindent 
Define $\varphi : G \ra [0,1]$ by
\[
\varphi(g) = \theta\Big(A \cap \Big( \bigcap_{k=1}^r a_k(g,A) \Big)\Big), \quad g \in G.
\] 
By Theorem \ref{Thm_austin} and Corollary \ref{cor_syndav} (applied to the function $f = \varphi|_\Gamma$), we conclude that there exists 
$c_o > 0$ such that
\[
S_\Gamma = \Big\{ \gamma \in \Gamma \, : \, \varphi(\gamma) \geq c_o \Big\}
\]
is syndetic in $\Gamma$. The following simple lemma will be proved below.

\begin{lemma}
\label{lemma_unifcont}
$\varphi$ is uniformly continuous on $G$.
\end{lemma}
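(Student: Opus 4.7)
The plan is to reduce uniform continuity of $\varphi$ to pointwise continuity at the origin of the $r$ auxiliary functions
\[
\psi_k(h) := \theta\big(a_k(h,A) \, \Delta \, A\big), \qquad k = 1,\ldots,r,
\]
and then invoke the standard strong continuity of Koopman representations of Borel measure-preserving actions. Writing $B_k(g) := a_k(g,A)$, a routine telescoping decomposition of the intersection $A \cap \bigcap_{k} B_k(g)$ between $g = g_1$ and $g = g_2$ gives
\[
|\varphi(g_1) - \varphi(g_2)| \;\leq\; \sum_{k=1}^r \theta\big(B_k(g_1) \, \Delta \, B_k(g_2)\big),
\]
since each summand in the telescope has the form $|\theta(X \cap B_k(g_1)) - \theta(X \cap B_k(g_2))|$ for some common set $X$, and this is bounded by $\theta(B_k(g_1) \, \Delta \, B_k(g_2))$.

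Next, since $G$ is abelian and each $a_k$ is a $\theta$-preserving group action, one has $B_k(g_1) = a_k(g_1 - g_2, B_k(g_2))$, and applying the $\theta$-preserving map $a_k(-g_2, \cdot)$ yields $\theta(B_k(g_1) \, \Delta \, B_k(g_2)) = \psi_k(g_1 - g_2)$. Thus it suffices to show that each $\psi_k$ is continuous at $0 \in G$ (where it clearly vanishes). For this I would invoke the classical fact that every Borel measurable measure-preserving action of an lcsc group on a standard probability space induces a \emph{strongly continuous} unitary (Koopman) representation on $L^2(Z,\theta)$. This is a consequence of automatic continuity: for each $f \in L^2$, the orbit map $g \mapsto U_g f$ is Borel measurable from $G$ into the unitary group of $L^2(Z,\theta)$ with the strong operator topology, which is a Polish group, and any Borel measurable homomorphism from an lcsc group into a Polish group is continuous. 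Taking $f = \chi_A$ then gives that $h \mapsto \chi_{a_k(h,A)}$ is continuous in $L^2$ (hence in $L^1$), which is precisely the statement that $\psi_k(h) \to 0$ as $h \to 0$.

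Combining these two ingredients, given $\varepsilon > 0$, for each $k$ we choose an open identity neighbourhood $V_k \subset G$ on which $\psi_k < \varepsilon/r$, and then $V := \bigcap_{k=1}^r V_k$ is an open identity neighbourhood such that $|\varphi(g_1) - \varphi(g_2)| < \varepsilon$ whenever $g_1 - g_2 \in V$, which is the desired uniform continuity. The one point requiring care -- and really the only non-mechanical step -- is the invocation of strong continuity for purely \emph{Borel} (rather than jointly continuous) actions; this is the standard automatic-continuity lemma, and identifying it as the right tool is the main, mild, obstacle.
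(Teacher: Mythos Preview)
Your proof is correct and follows essentially the same strategy as the paper's: both arguments telescope the difference $\varphi(g_1)-\varphi(g_2)$ into a sum of $r$ terms, each controlled by $\|\chi_A - \chi_{a_k(g_1-g_2,A)}\|_{L^1}$ (which equals your $\psi_k(g_1-g_2)$), and then invoke strong continuity of the induced representation to make each term small. The only cosmetic differences are that the paper writes the telescoping in integral form with explicit weight functions $\beta_k$ and cites a specific reference (Anantharaman et al., Lemme~A.1.1) for strong continuity of the $L^1$-representation, whereas you phrase it set-theoretically via symmetric differences and justify strong continuity through automatic continuity of Borel homomorphisms into Polish groups; both routes are standard and equivalent here.
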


Taking the lemma for granted for now, we can thus find an open set $V \subset G$ such that 
\begin{equation}
\label{varphig1g2}
|\varphi(g_1) - \varphi(g_2)| < c_o/2, \quad  
\textrm{for all $g_1,g_2 \in G$ such that $g_1 - g_2 \in V$}. 
\end{equation}
Define $S_o = S_\Gamma + V \subset G$. Since $S_\Gamma$ is syndetic in $\Gamma$, there is a finite set $F \subset \Gamma$ such that $S_\Gamma + F = \Gamma$, 
and thus
\[
S_o + F = (S_\Gamma + F) + V = \Gamma + V = G,
\]
so $S_o$ is syndetic in $G$. Now if $g \in S_o$, we can clearly write $g = \gamma + v$ for some $\gamma \in S_\Gamma$ and $v \in V$, and thus by \eqref{varphig1g2} (with $g_1 = \gamma + v$ and $g_2 = \gamma$),
\[
\varphi(g) = \varphi(\gamma + v) - \varphi(\gamma) + \varphi(\gamma) \geq \varphi(\gamma) - c_o/2 \geq c_o/2.
\]
Let $c = c_o/2$ and define
\[
S = \{ g \in G \, : \, \varphi(g) \geq c \}.
\]
The argument above shows that $S_o \subset S$ and thus $S$ is syndetic as well, finishing the proof of Theorem \ref{Thm_SyndMultRecG}.

\subsection{Proof of Lemma \ref{lemma_unifcont}}

By \cite[Lemme A.1.1]{anetal}, the induced $L^1$-representations associated with the actions $a_1,\ldots,a_r$ are strongly continuous, so for every $\eps > 0$, we can 
find an open identity neighbourhood $U \subset G$ such that
\begin{equation}
\label{akeps}
\int_Z |\chi_A(z) - \chi_{A}(a_k(g,z)) | \, d\theta(z) < \eps/r, \quad \textrm{for all $g \in U$ and $k=1,\ldots,r$}.
\end{equation}
Note that
\[
\varphi(g) 
= \int_Z \chi_A(z) \, \prod_{k=1}^r \chi_A(a_k(-g,z)) \, d\theta(z), \quad \textrm{for all $g \in G$}.
\]
A simple induction argument shows that for all $g_1, g_2 \in G$, 
\[
\varphi(g_1) - \varphi(g_2)
= \sum_{k=1}^r \int_Z \beta_k(z) \cdot (\chi_{A}(a_k(-g_1,z) - \chi_{A}(a_k(-g_2,z)) \, d\eta(z),
\]
where
\[
\beta_k(z) = \big(\prod_{j=k+1}^{r} \chi_{A}(a_j(-g_1,z))\big) \cdot 
\big(\prod_{j=1}^{k-1} \chi_{A}(a_j(-g_2,z))\big).
\]
Here we use the convention that products over empty sets are always equal to $1$. Note that $\|\beta_k\|_\infty \leq 1$ for all $k$. \\

Fix $\eps > 0$ and let $U$ be as above. Note that if $g_1 - g_2 \in U$, then by \eqref{akeps},
\begin{align*}
|\varphi(g_1) - \varphi(g_2)| 
&\leq \sum_{k=1}^r \int_Z |\chi_A(a_k(-g_1,z)) - \chi_{A}(a_k(-g_2,z)) | \, d\theta(z) \\[0.2cm]
&= 
\sum_{k=1}^r \int_Z |\chi_A(z) - \chi_{A}(a_k(g_1-g_2,z)) | \, d\theta(z)
< \eps.
\end{align*}
Since $\eps > 0$ and $g_1,g_2 \in G$ such that $g_1 - g_2 \in U$ are arbitrary, $\varphi$ is uniformly continuous.

\section{Proof of Theorem \ref{Thm_MainErg}}
\label{sec:prfdyn}

Let $(X,\mathscr{B}_X)$ be a Borel $G$-space, equipped with a $U$-separated  cross-section $Y \subset X$ for some identity neighbourhood $U$ in $G$. Furthermore, 
let $\Lambda$ and $\Delta$ be uniformly discrete subsets of $G$ and suppose that
there exist $\alpha_1,\ldots,\alpha_r \in \End(G)$ such that $\alpha_k(\Lambda) \subseteq \Delta$ for all $k=1,\ldots,r$. \\

We further assume that $\Lambda$ has finite local complexity and that there is a set $\Lambda_o$ with positive upper Banach density such that $\Lambda_o - \Lambda_o \subseteq \Lambda$. We also assume there is an identity neighbourhood $V \subset U$ such that $(\Xi_Y - \Delta) \cap V = \{0\}$. \\

In this section, we show the following slightly stronger version of Theorem \ref{Thm_MainErg} (which corresponds to the case when $\Delta = \Lambda^q$). 

\begin{theorem}
\label{Thm_SyndLevelSet}
For every $\mu \in \Prob_G(X)$ and Borel set $B \subset Y$
with positive $\mu_Y$-measure, there exists $c > 0$ such that the set
\[
S = \Big\{ \lambda \in \Lambda \, : \, \mu_Y\Big( \bigcap_{k=1}^r (-\alpha_k(\lambda)).B\Big) \geq c \Big\}
\]
is syndetic in $\Lambda$.
\end{theorem}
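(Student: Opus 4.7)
The plan is to combine the Furstenberg--Katznelson theorem for $G$ (Theorem \ref{Thm_SyndMultRecG}) with the Correspondence Principle applied to the ``density set'' $\Lambda_o$ (Theorem \ref{Thm_CorrPrinciple}) via a product-system construction, and then transfer the resulting Haar-integral lower bound into a lower bound on a sum over $\Lambda$, from which syndeticity follows by Lemma \ref{Lemma_syndfromav}.

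First I would thicken $B$ into an open subset of $X$. Choose a small symmetric open $m_G$-Jordan identity neighbourhood $V_o$ with $V_o - V_o \subset V$, and set $B_{V_o} := V_o.B$, which has positive $\mu$-measure $m_G(V_o)\mu_Y(B)$. Using $\alpha_k(\Lambda) \subset \Delta$ and $(\Xi_Y - \Delta)\cap V = \{0\}$, Lemma \ref{Lemma_g1gr} yields, for all $\lambda \in \Lambda$,
\[
\mu_Y\Bigl(\bigcap_{k=1}^{r}(-\alpha_k(\lambda)).B\Bigr) = \frac{\varphi(\lambda)}{m_G(V_o)},\qquad \varphi(g):=\mu\Bigl(B_{V_o}\cap\bigcap_{k=1}^{r}(-\alpha_k(g)).B_{V_o}\Bigr).
\]
The function $\varphi\colon G \to [0,1]$ is uniformly continuous by essentially the same argument as in Lemma \ref{lemma_unifcont}. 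Since $\Lambda$ has finite local complexity, Lemma \ref{Lemma_syndfromav} reduces the theorem to showing that for every strong F\o lner sequence $(F_n)$ in $G$,
\[
\varliminf_{n\to\infty}\frac{1}{m_G(F_n)}\sum_{\lambda\in\Lambda\cap F_n}\varphi(\lambda)>0.
\]

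To get an integral version of such a bound with a useful weight, I would form a product system. Apply the Correspondence Principle to $\Lambda_o$ to obtain an ergodic $\nu\in\Prob_G(\Omega_{\Lambda_o}^{\times})$ with $\nu_{\cT_{\Lambda_o}}(\cT_{\Lambda_o})=d^*(\Lambda_o)>0$. On $(X\times\Omega_{\Lambda_o}^{\times},\,\mu\otimes\nu)$ consider the $r$ commuting, measure-preserving Borel $G$-actions
\[
a_k(g,(x,\omega)) := (\alpha_k(g).x,\ g.\omega),\qquad k=1,\ldots,r,
\]
which commute because $G$ is abelian. For a small open identity neighbourhood $V'_o$ whose difference set lies in a uniform-discreteness neighbourhood of $\Lambda_o$, the set $C:=B_{V_o}\times\cO_{V'_o}$ has positive $\mu\otimes\nu$-measure, and Theorem \ref{Thm_SyndMultRecG} applied to these actions and this set, combined with the change of variables $g\mapsto -g$ (noting that $(-F_n)$ is again a strong F\o lner sequence), gives, for every strong F\o lner $(F_n)$,
\[
\varliminf_{n\to\infty}\frac{1}{m_G(F_n)}\int_{F_n}\varphi(g)\,\psi(-g)\,dm_G(g)>0,\qquad \psi(h):=\nu(\cO_{V'_o}\cap h.\cO_{V'_o}).
\]

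Finally, I would transfer this weighted integral bound to the desired sum. Fix a small symmetric open $V$ with $V'_o - V'_o \subset V$, chosen so small that $\varphi$ oscillates by less than $\eps$ on $V$-shifts and $V-V$ lies in a uniform-discreteness neighbourhood of $\Lambda$. Lemma \ref{Lemma_lwbndnu} applied to $\Lambda_o\subset\Lambda$ (and the substitution $g\mapsto -g$, using symmetry of $V$) gives the pointwise bound $\sum_{\lambda\in\Lambda}\chi_V(g-\lambda)\geq\psi(-g)$. Multiplying this by $\varphi(g)\chi_A(g)$, integrating in $g$, and estimating $\varphi(g)\leq \varphi(\lambda)+\eps$ on each strip $\lambda+V$ yields, for any Borel $A\subset G$,
\[
\int_A\varphi(g)\psi(-g)\,dm_G(g)\,\leq\,m_G(V)\sum_{\lambda\in\Lambda\cap(A+V)}\bigl(\varphi(\lambda)+\eps\bigr).
\]
Apply this with $A:=\bigcap_{v\in V}(F_n-v)$, the $V$-inner bulk of $F_n$, so that $A+V\subset F_n$ and $m_G(F_n\setminus A)/m_G(F_n)\to 0$ by the strong F\o lner property. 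Bounding $|\Lambda\cap F_n|/m_G(F_n)$ uniformly by Lemma \ref{Lemma_UQV} and combining with the previous display gives
\[
\varliminf_{n\to\infty}\frac{1}{m_G(F_n)}\sum_{\lambda\in\Lambda\cap F_n}\varphi(\lambda)\,\geq\,\frac{c_1}{m_G(V)}-\eps\,C,
\]
which is strictly positive once $\eps$ is chosen small enough. The main obstacle is precisely this final transfer: because $\Lambda$ need only contain the difference set of a positive-density $\Lambda_o$ and may well fail to be relatively dense, the naive passage from $G$-averages to $\Lambda$-sums breaks down. The product-system device is what resolves this by injecting the weight $\psi(-g)$ into the FK-bound, and that weight is exactly what is matched by the pointwise lower bound on $|\Lambda\cap(g+V)|$ provided by Lemma \ref{Lemma_lwbndnu}.
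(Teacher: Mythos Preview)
Your overall architecture matches the paper's proof almost exactly: reduce to a F\o lner-average lower bound via Lemma~\ref{Lemma_syndfromav}, thicken $B$ via Lemma~\ref{Lemma_g1gr}, pass to the product system $(X\times\Omega_{\Lambda_o}^{\times},\mu\otimes\nu)$ with commuting $G$-actions built from the $\alpha_k$ and the translation action on $\Omega_{\Lambda_o}^{\times}$, invoke Theorem~\ref{Thm_SyndMultRecG}, and use Lemma~\ref{Lemma_lwbndnu} to convert the $G$-integral into a $\Lambda$-sum. This is precisely the skeleton of Proposition~\ref{Prop_LowBndAvLambda} and Theorem~\ref{Thm_PosAvLambda}.

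There is, however, a genuine gap in your transfer step. You fix $V'_o$ first (which fixes $C=B_{V_o}\times\cO_{V'_o}$ and hence the FK constant $c_1>0$), and afterwards ask for a symmetric open $V$ that simultaneously satisfies $V'_o-V'_o\subset V$ \emph{and} has $\varphi$-oscillation less than $\eps$. The first condition is a \emph{lower} bound on $V$, so once $V'_o$ is fixed you cannot shrink $V$ below $V'_o-V'_o$, and hence $\eps$ is bounded below by the oscillation of $\varphi$ on $V'_o-V'_o$. Your final claim ``strictly positive once $\eps$ is chosen small enough'' therefore fails: you are not free to send $\eps\to 0$. If instead you choose $\eps$ (hence $V$, hence $V'_o$) first, then $c_1=c_1(V'_o)$ depends on $\eps$ and Theorem~\ref{Thm_SyndMultRecG} gives no quantitative control on $c_1/m_G(V)$ as $\eps\to 0$.

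The paper avoids this circularity by replacing your uniform-continuity estimate with a monotonicity (two-scale) estimate: choose a symmetric $V_1\subset V_o$ with $V_1+\alpha_k(V_1)\subset V_o$ for every $k$, and observe that for all $h\in V_1$,
\[
\mu\Bigl(B_{V_o}\cap\bigcap_{k=1}^r(-\alpha_k(\lambda)).B_{V_o}\Bigr)\ \geq\ \mu\Bigl(B_{V_1}\cap\bigcap_{k=1}^r(-\alpha_k(\lambda-h)).B_{V_1}\Bigr),
\]
because $B_{V_1}\subset B_{V_o}$ and $\alpha_k(h).B_{V_1}\subset B_{V_o}$. This inequality has no $\eps$-loss. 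One then applies Theorem~\ref{Thm_SyndMultRecG} to $A_o=B_{V_1}\times\cO_{V_2}$ (with $V_2-V_2\subset V_1$), and the transfer via Lemma~\ref{Lemma_lwbndnu} goes through cleanly. Replacing your uniform-continuity step by this two-scale trick repairs the argument completely; everything else in your sketch is correct.
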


Since $\Lambda$ is assumed to have finite local complexity in $G$, Lemma \ref{Lemma_syndfromav} tells us that Theorem \ref{Thm_SyndLevelSet} is an immediate consequence of the following theorem, whose proof will be presented below.

\begin{theorem}
\label{Thm_PosAvLambda}
Let $\Delta, \Lambda$ and $\alpha_1,\ldots,\alpha_r$ be as in Theorem \ref{Thm_SyndLevelSet}. Let $\mu \in \Prob_G(X)$ and let $B \subset Y$
be a Borel set with positive $\mu_Y$-measure. Then, for every strong F\o lner 
sequence $(F_n)$ in $G$, 
\[
\varliminf_{n \ra \infty}
\frac{1}{m_G(F_n)} \sum_{\lambda \in \Lambda \cap F_n} \mu_Y\Big( \bigcap_{k=1}^r (-\alpha_k(\lambda)).B\Big) > 0.
\]
\end{theorem}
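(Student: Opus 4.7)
Break the argument into three moves: (i) convert the transverse measure to an ordinary measure on $X$ via Lemma~\ref{Lemma_g1gr}, (ii) apply the lcsc Furstenberg-Katznelson Theorem~\ref{Thm_SyndMultRecG} to a product of $(X,\mu)$ with the hull of $\Lambda_o$, and (iii) convert the resulting integral bound back to a sum over $\Lambda$ via Lemma~\ref{Lemma_lwbndnu}. Since $\mu_Y\bigl(\bigcap_{k=1}^r (-\alpha_k(\lambda)).B\bigr) \geq \mu_Y\bigl(B \cap \bigcap_{k=1}^r (-\alpha_k(\lambda)).B\bigr)$, it suffices to handle the smaller quantity. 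Pick an open symmetric identity neighbourhood $V_o$ with $V_o - V_o \subset V$ and set $B_{V_o} := V_o.B$; since $\alpha_k(\lambda) \in \Delta$ and $(\Xi_Y - \Delta) \cap V = \{0\}$, Lemma~\ref{Lemma_g1gr} gives
\[
\mu_Y\Big(B \cap \bigcap_{k=1}^r (-\alpha_k(\lambda)).B\Big) = \frac{\phi(\lambda)}{m_G(V_o)}, \qquad \phi(g) := \mu\Big(B_{V_o} \cap \bigcap_{k=1}^r (-\alpha_k(g)).B_{V_o}\Big),
\]
and the argument of Lemma~\ref{lemma_unifcont} applied to the continuous endomorphisms $-\alpha_k$ shows that $\phi : G \to [0,1]$ is uniformly continuous.

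\textbf{Product system and Furstenberg-Katznelson.} By Theorem~\ref{Thm_CorrPrinciple} applied to $\Lambda_o$ with the ambient $\Delta$ of that theorem equal to $\{0\}$, fix an ergodic $\nu \in \Prob_G(\Omega_{\Lambda_o}^\times)$ with $\nu_{\cT_{\Lambda_o}}(\cT_{\Lambda_o}) = d^*(\Lambda_o) > 0$, together with open symmetric identity neighbourhoods $V_o' \subset V' \subset G$ with $V_o' - V_o' \subset V'$ and $V'$ small enough that $\cT_{\Lambda_o}$ is $V'$-separated. On the standard Borel $G$-space $(X \times \Omega_{\Lambda_o}^\times,\, \mu \otimes \nu)$ the maps
\[
a_k(g, (x, P)) := (-\alpha_k(g).x,\; g.P), \qquad k = 1, \ldots, r,
\]
are pairwise commuting $(\mu \otimes \nu)$-preserving Borel $G$-actions, by abelianness of $G$ together with $\alpha_k \in \End(G)$. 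With $A := B_{V_o} \times \cO_{V_o'}$, a direct computation (the $\Omega$-factor is the same for every $k$) gives
\[
(\mu \otimes \nu)\Big(A \cap \bigcap_{k=1}^r a_k(g, A)\Big) = \phi(g) \cdot \nu\bigl(\cO_{V_o'} \cap g.\cO_{V_o'}\bigr),
\]
and $(\mu \otimes \nu)(A) > 0$, so Theorem~\ref{Thm_SyndMultRecG} applied to this product system yields
\begin{equation}
\label{eq:planFK}
\varliminf_{n \to \infty} \frac{1}{m_G(F_n)} \int_{F_n} \phi(g) \cdot \nu\bigl(\cO_{V_o'} \cap g.\cO_{V_o'}\bigr)\, dm_G(g) > 0.
\end{equation}

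\textbf{Integral to sum.} Lemma~\ref{Lemma_lwbndnu}, whose hypothesis $\Lambda_o - \Lambda_o \subset \Lambda$ is exactly ours, gives the pointwise bound $\nu\bigl(\cO_{V_o'} \cap g.\cO_{V_o'}\bigr) \leq \sum_{\lambda \in \Lambda} \chi_{V'}(g + \lambda)$. Substituting into \eqref{eq:planFK} and swapping sum and integral, the $\lambda$-summand is supported on $g \in F_n \cap (V' - \lambda)$, on which $g = v - \lambda$ with $v \in V'$; for $V'$ small enough that the uniform-continuity oscillation of $\phi$ on $V'$ is at most $\eps$, we have $\phi(g) \leq \phi(-\lambda) + \eps$ on this sliver, and symmetry of $\Lambda$ and $V'$ lets us reindex $\mu = -\lambda$ to obtain
\[
\int_{F_n} \phi(g)\, \nu(\cO_{V_o'} \cap g.\cO_{V_o'})\, dg \;\leq\; m_G(V')\Big(\sum_{\mu \in \Lambda \cap (F_n + V')} \phi(\mu) \;+\; \eps \cdot |\Lambda \cap (F_n + V')|\Big).
\]
Uniform discreteness of $\Lambda$ via Lemma~\ref{Lemma_UQV} gives $|\Lambda \cap (F_n + V')| = O(m_G(F_n))$, and the strong F\o lner property of $(F_n)$ lets one replace $F_n + V'$ by $F_n$ in the sum at cost $o(m_G(F_n))$. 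Combining with \eqref{eq:planFK} and choosing $\eps$ sufficiently small gives $\varliminf_n m_G(F_n)^{-1} \sum_{\lambda \in \Lambda \cap F_n} \phi(\lambda) > 0$, which up to the constant factor $m_G(V_o)$ is precisely the theorem.

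The main obstacle is calibrating the Furstenberg-Katznelson lower bound in \eqref{eq:planFK} against the conversion factor $m_G(V')$ and the uniform-continuity modulus of $\phi$: naively, both $c_0 = c_0(V_o')$ and $\omega_\phi(V')$ shrink as we take $V_o'$ and $V'$ small. The key observation resolving this is that, by a further application of Lemma~\ref{Lemma_g1gr} to the cross-section $\cT_{\Lambda_o}$ (with $\Delta = \Lambda$), the left-hand side of \eqref{eq:planFK} equals $m_G(V_o')$ times the $V_o'$-independent quantity $\varliminf_n m_G(F_n)^{-1} \int_{F_n} \phi(g) \nu_{\cT_{\Lambda_o}}(\cT_{\Lambda_o} \cap g.\cT_{\Lambda_o}) dm_G(g)$, so after dividing through by $m_G(V_o')$ the relevant constant is intrinsic and can be matched against the uniform-continuity error by shrinking $V'$ alone.
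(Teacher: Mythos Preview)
Your overall architecture matches the paper's: convert $\mu_Y$ to $\mu$ via Lemma~\ref{Lemma_g1gr}, build a product system with the hull of $\Lambda_o$, invoke Theorem~\ref{Thm_SyndMultRecG}, and use Lemma~\ref{Lemma_lwbndnu} to pass between the integral over $G$ and the sum over $\Lambda$. But there is a genuine gap in the final calibration, and your proposed fix does not work.

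\textbf{The gap.} You correctly identify the obstacle: the Furstenberg--Katznelson constant $c_0$ in \eqref{eq:planFK} depends on $A = B_{V_o}\times\cO_{V_o'}$ and hence on $V_o'$, while your integral-to-sum estimate forces $V'$ (and with it $V_o'\subset V'$) to shrink so that the uniform-continuity error $\eps$ is small. Your resolution claims that Lemma~\ref{Lemma_g1gr}, applied to $\cT_{\Lambda_o}$ with $\Delta=\Lambda$, yields
\[
\nu\bigl(\cO_{V_o'}\cap g.\cO_{V_o'}\bigr)=m_G(V_o')\cdot \nu_{\cT_{\Lambda_o}}\bigl(\cT_{\Lambda_o}\cap g.\cT_{\Lambda_o}\bigr)
\]
for $g\in G$, so that the left side of \eqref{eq:planFK} factors as $m_G(V_o')$ times something $V_o'$-independent. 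But Lemma~\ref{Lemma_g1gr} only gives this identity for $g\in\Delta$; for Haar-a.e.\ $g\in G$ one has $g\notin\Xi_{\cT_{\Lambda_o}}$ and hence $\cT_{\Lambda_o}\cap g.\cT_{\Lambda_o}=\emptyset$, so the alleged ``$V_o'$-independent quantity'' $\int_{F_n}\phi(g)\,\nu_{\cT_{\Lambda_o}}(\cT_{\Lambda_o}\cap g.\cT_{\Lambda_o})\,dm_G(g)$ is identically zero. The factorisation therefore fails, and the circularity you diagnosed remains unresolved.

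\textbf{How the paper avoids this.} The paper never appeals to uniform continuity of $\phi$. Instead it fixes once and for all a symmetric neighbourhood $V_1\subset V_o$ with $V_1+\alpha_k(V_1)\subset V_o$ for every $k$, which yields the pointwise \emph{set-theoretic} inequality
\[
\mu\Big(B_{V_o}\cap\bigcap_{k}(-\alpha_k(g)).B_{V_o}\Big)\;\geq\;\mu\Big(B_{V_1}\cap\bigcap_{k}(-\alpha_k(g-h)).B_{V_1}\Big)\qquad(h\in V_1),
\]
with no $\eps$-loss. Convolving against $\chi_{V_1}$, changing variables, and applying Lemma~\ref{Lemma_lwbndnu} then gives directly
\[
\sum_{\lambda\in\Lambda\cap F_n}\mu_Y\Big(\bigcap_k(-\alpha_k(\lambda)).B\Big)\;\geq\;\beta_o\int_{F_n}(\mu\otimes\nu)\Big(A_o\cap\bigcap_k a_k(g,A_o)\Big)\,dm_G(g)-\delta_n\,m_G(F_n),
\]
with $A_o=B_{V_1}\times\cO_{V_2}$, $\beta_o=(m_G(V_o)m_G(V_1))^{-1}$, and $\delta_n\to 0$. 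All neighbourhoods are fixed \emph{before} invoking Theorem~\ref{Thm_SyndMultRecG}, so no balancing of constants is needed. Replacing your uniform-continuity step by this nested-neighbourhood trick would close the gap.
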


\subsection{Proof of Theorem \ref{Thm_PosAvLambda}}

In what follows, let us fix $\mu \in \Prob_G(X)$ and a Borel set $B \subset Y$
with positive $\mu_Y$-measure. We first show how Theorem \ref{Thm_PosAvLambda}
can be deduced from the following proposition. 

\begin{proposition}
\label{Prop_LowBndAvLambda}
Let $\Delta, \Lambda, \Lambda_o$ and $\alpha_1,\ldots,\alpha_r$ be as in Theorem \ref{Thm_SyndLevelSet}. Let $\nu \in \Prob_G(\Omega_{\Lambda_o}^{\times})$ and let $(F_n)$ be a strong F\o lner sequence in $G$. Then there exist $\beta_o > 0$, a non-negative null sequence $(\delta_n)$, $r$ commuting $\mu \otimes \nu$-preserving Borel $G$-actions $a_1,\ldots,a_r$ on $X \times \Omega_{\Lambda_o}^{\times}$ and a Borel set $A_o \subset X \times \Omega_{\Lambda_o}^{\times}$
with positive $\mu \otimes \nu$-measure such that
\begin{align*}
&\sum_{\lambda \in \Lambda \cap F_n} \mu_Y\Big( \bigcap_{k=1}^r (-\alpha_k(\lambda)).B\Big) \\[0.1cm]
&\geq \beta_o \cdot \int_{F_n} (\mu \otimes \nu)\Big(A_o \cap \Big( \bigcap_{k=1}^r a_k(g,A_o) \Big) \Big) \, dm_G(g) - \delta_n \cdot m_G(F_n), \quad \textrm{for all $n$}.
\end{align*}
\end{proposition}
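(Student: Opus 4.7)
The plan is to reduce the desired inequality to an averaging statement that factorizes over the product space $X \times \Omega_{\Lambda_o}^{\times}$, using Lemma \ref{Lemma_lwbndnu} to convert a $\nu$-factor into a counting function over $\Lambda$ and a continuity argument for the endomorphisms $\alpha_k$ to control the $\mu$-factor. Concretely, I choose a nested family of open identity neighbourhoods: $V_o \subset V'$ in $G$ with $V' - V' \subset W$, where $W$ is the neighbourhood from the hypothesis satisfying $(\Xi_Y - \Delta) \cap W = \{0\}$; a symmetric open $V$ small enough that $V_o - \alpha_k(V) \subset V'$ for all $k$ (possible since each $\alpha_k$ is continuous with $\alpha_k(0) = 0$); and $\widetilde{V}_o$ with $\widetilde{V}_o - \widetilde{V}_o \subset V$. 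Set $B_{V_o} := V_o.B$, $A_o := B_{V_o} \times \cO_{\widetilde{V}_o}$, and define
\[
a_k(g,(x,P)) := \big((-\alpha_k(g)).x,\,(-g).P\big), \quad k = 1, \ldots, r.
\]
Each $a_k$ is a Borel $G$-action (the composition of the Borel $G$-actions on $X$ and $\Omega_{\Lambda_o}^{\times}$ with the continuous endomorphisms $-\alpha_k$ and $-\mathrm{id}$), these commute pairwise because $G$ is abelian, and they preserve $\mu \otimes \nu$ since $\mu$ and $\nu$ are $G$-invariant. The set $A_o$ has positive $(\mu \otimes \nu)$-measure, since $\mu(B_{V_o}) = m_G(V_o) \mu_Y(B) > 0$ and $\nu(\cO_{\widetilde{V}_o}) > 0$, the latter following from $G$-invariance of $\nu$ combined with the observation that $\sigma$-compactness of $G$ forces countably many $G$-translates of $\cO_{\widetilde{V}_o}$ to cover $\Omega_{\Lambda_o}^{\times}$.

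The crucial feature of this construction is that all $a_k$ act on the second coordinate by the same translation $(-g)$, so
\[
A_o \cap \bigcap_{k=1}^r a_k(g,A_o) = \Big(B_{V_o} \cap \bigcap_{k=1}^r (-\alpha_k(g)).B_{V_o}\Big) \times \big(\cO_{\widetilde{V}_o} \cap (-g).\cO_{\widetilde{V}_o}\big).
\]
Lemma \ref{Lemma_lwbndnu} (applied with $(\widetilde{V}_o, V)$ to $-g$) bounds the $\nu$-measure of the second factor above by $|\Lambda \cap (g + V)|$. Integrating over $F_n$ and swapping sum with integral reduces matters to estimating, for each $\lambda \in \Lambda$, the contribution from $g = \lambda + v$ with $v \in V$. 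For such $g$, the identity $(-\alpha_k(g)).B_{V_o} = (V_o - \alpha_k(\lambda) - \alpha_k(v)).B$ combined with the containment $V_o - \alpha_k(v) \subset V'$ yields $(-\alpha_k(g)).B_{V_o} \subset (-\alpha_k(\lambda)).B_{V'}$ for each $k$, and likewise $B_{V_o} \subset B_{V'}$. Lemma \ref{Lemma_g1gr}, applied with $V'$ playing the role of $V_o$ (legitimate because $V' - V' \subset W$ and $\alpha_k(\lambda) \in \Delta$), then produces the clean bound
\[
\mu\Big(B_{V_o} \cap \bigcap_{k=1}^r (-\alpha_k(g)).B_{V_o}\Big) \leq m_G(V')\, f(\lambda), \qquad f(\lambda) := \mu_Y\Big(\bigcap_{k=1}^r (-\alpha_k(\lambda)).B\Big).
\]

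Assembling these estimates yields
\[
\int_{F_n}(\mu \otimes \nu)\Big(A_o \cap \bigcap_{k=1}^r a_k(g,A_o)\Big)\, dm_G(g) \leq m_G(V) m_G(V') \sum_{\lambda \in \Lambda \cap (F_n + V)} f(\lambda),
\]
which rearranges to the proposition's inequality with $\beta_o := 1/(m_G(V) m_G(V'))$ and boundary error $\delta_n m_G(F_n) := \sum_{\lambda \in \Lambda \cap ((F_n + V) \setminus F_n)} f(\lambda)$. The main remaining obstacle is verifying $\delta_n \to 0$. Since $f$ is bounded by $\mu_Y(Y) < \infty$, it suffices to bound $|\Lambda \cap ((F_n + V) \setminus F_n)|$; by Lemma \ref{Lemma_UQV} this is dominated by $m_G(((F_n + V) \setminus F_n) + V_\Lambda)/m_G(V_\Lambda)$ for a suitable uniformity neighbourhood $V_\Lambda$ of $\Lambda$, and the resulting boundary shell is contained in $(F_n + V + V_\Lambda) \setminus \bigcap_{v \in V_\Lambda}(F_n - v)$, whose Haar measure is $o(m_G(F_n))$ by combining conditions (i) and (ii) of the strong F\o lner definition (after shrinking $V_\Lambda$ to lie inside the adaptation neighbourhood of $(F_n)$).
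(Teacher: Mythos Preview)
Your proof is correct and follows essentially the same route as the paper: pass from $\mu_Y$ to $\mu$ via Lemma~\ref{Lemma_g1gr}, use continuity of the $\alpha_k$ to smear between a discrete $\Lambda$-sum and a $G$-integral, invoke Lemma~\ref{Lemma_lwbndnu} to connect the $\nu$-correlation on $\Omega_{\Lambda_o}^\times$ with the $\Lambda$-count, and handle the boundary with the strong F\o lner condition. The only notable cosmetic difference is that the paper lets only $a_1$ act nontrivially on the second coordinate (with $a_2,\ldots,a_r$ fixing it), whereas you make all $a_k$ act by the same translation there; since the second-coordinate intersection collapses to a single copy of $\cO_{\widetilde V_o}\cap(-g).\cO_{\widetilde V_o}$ either way, the two choices give the same product set and the same inequality.
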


\begin{proof}[Proof of Theorem \ref{Thm_PosAvLambda} assuming Proposition \ref{Prop_LowBndAvLambda}]
Let $(F_n)$ be a strong F\o lner sequence in $G$. Then Proposition \ref{Prop_LowBndAvLambda} tells us that there exist $\beta_o > 0$,
$r$ commuting $\mu \otimes \nu$-preserving Borel $G$-actions $a_1,\ldots,a_r$ on $X \times \Omega_{\Lambda_o}^{\times}$ and a Borel set $A_o \subset X \times \Omega_{\Lambda_o}^{\times}$
with positive $\mu \otimes \nu$-measure such that
\begin{align*}
&\varliminf_{n \ra \infty} \frac{1}{m_G(F_n)} \cdot \sum_{\lambda \in \Lambda \cap F_n}
\mu_Y\Big( \bigcap_{k=1}^r (-\alpha_k(\lambda)).B\Big)  \\[0.2cm]
&\geq \beta_o \cdot \varliminf_{n \ra \infty} \frac{1}{m_G(F_n)} \cdot
\int_{F_n} (\mu \otimes \nu)\Big(A_o \cap \Big( \bigcap_{k=1}^r a_k(g,A_o) \Big) \Big) \, dm_G(g).
\end{align*}
The positivity of the last expression now follows from Theorem \ref{Thm_SyndMultRecG},
applied to the Borel space $(Z,\theta) = (X \times \Omega_{\Lambda_o},\mu \otimes \nu)$.
\end{proof}

\subsection{Proof of Proposition \ref{Prop_LowBndAvLambda}}

\noindent Before we begin the proof, let us briefly recall the main players in this proposition.
\vspace{0.2cm}
\begin{itemize}
\item[(i)] $(X,\mathscr{B}_X)$ is a Borel $G$-space, equipped with a $U$-separated cross-section $Y$ for some identity neighbourhood $U$ in $G$. Furthermore, we fix $\mu \in \Prob_G(X)$ and a Borel set $B \subset Y$ with positive $\mu_Y$-measure. \vspace{0.2cm}
\item[(ii)] $\Lambda_o \subset G$ is a uniformly discrete set with positive upper Banach density, and we fix $\nu \in \Prob_G(\Omega_{\Lambda_o}^{\times})$. Note that
$\nu(\cO_{W}) > 0$ for all identity neighbourhoods $W$ in $G$, where $\cO_W = (-W).\cT_{\Lambda_o} \subset \Omega_{\Lambda_o}^{\times}$. \vspace{0.2cm}
\item[(iii)] $\Lambda$ and $\Delta$ are uniformly discrete sets in $G$ such that 
\[
\Lambda_o - \Lambda_o \subset \Lambda \qand (\Xi_Y - \Delta) \cap V = \{0\}
\]
for some identity neighbourhood $V \subset U$. \vspace{0.2cm}
\item[(iv)] Fix $\alpha_1,\ldots,\alpha_r \in \End(G)$ such that $\alpha_k(\Lambda) \subset \Delta$ for all $k=1,\ldots,r$. \vspace{0.2cm}
\item[(v)] Fix a strong F\o lner sequence $(F_n)$ in $G$.
\end{itemize}
\vspace{0.2cm}

\noindent In what follows, we fix an identity neighbourhood $V_o$ such that $V_o - V_o \subset V$. By (iv), we have $\alpha_k(\Lambda) \subseteq \Delta$ for all $k$, so Lemma \ref{Lemma_g1gr} (applied to $g_k = \alpha_k(\lambda)$ for $\lambda \in \Lambda) $ tells us that
\[
\mu_Y\Big( B \cap \bigcap_{k=1}^r (-\alpha_k(\lambda)).B\Big)
= \frac{1}{m_G(V_o)} \cdot \mu\Big(B_{V_o} \cap \Big( \bigcap_{k=1}^r (-\alpha_k(\lambda)).B_{V_o}\Big)\Big), \quad \textrm{for all $\lambda \in \Lambda$}.
\]
In particular,
\begin{equation}
\label{muYmu}
\sum_{\lambda \in \Lambda \cap F_n} \mu_Y\Big( B \cap \bigcap_{k=1}^r (-\alpha_k(\lambda)).B\Big)
=
\frac{1}{m_G(V_o)} \cdot \sum_{\lambda \in \Lambda \cap F_n} 
\mu\Big(B_{V_o} \cap \Big( \bigcap_{k=1}^r (-\alpha_k(\lambda)).B_{V_o}\Big)\Big)
\end{equation}
for all $n$. Since $\alpha_1,\ldots,\alpha_r$ are continuous endomorphisms of $G$, we can find a \emph{symmetric} identity neighbourhood $V_1 \subset V_o$ such that $V_1 + \alpha_k(V_1) \subset V_o$ for all $k$. In particular,
\[
B_{V_1} \subset B_{V_o} \qand \alpha_k(h).B_{V_1} \subset B_{V_o}, \quad \textrm{for all $h \in V_1$ and $k$},
\]
and thus
\begin{equation}
\label{akgh}
\mu\Big(B_{V_o} \cap \Big( \bigcap_{k=1}^r (-\alpha_k(g)).B_{V_o}\Big)\Big)
\geq
\mu\Big(B_{V_1} \cap \Big( \bigcap_{k=1}^r (-\alpha_k(g-h)).B_{V_1}\Big)\Big), 
\end{equation}
for all $g \in G$ and $h \in V_1$. Let us now define
\[
F_n' := \bigcap_{h \in V_1} (F_n - h) \enskip \textrm{and note that $F_n' + V_1 \subset F_n$}.
\]
Hence, 
\[
\chi_{F_n}(g) \geq 
\frac{1}{m_G(V_1)} \int_G \chi_{V_1}(h) \chi_{F_n'}(g-h) \, dm_G(h) \quad \textrm{for all $g \in G$},
\]
and it follows from \eqref{akgh} that,
\begin{align*}
&\int_G \chi_{V_1}(h) \chi_{F_n'}(g-h) \, dm_G(h) \cdot \mu\Big(B_{V_o} \cap \Big( \bigcap_{k=1}^r (-\alpha_k(g)).B_{V_o}\Big)\Big) \\
&\geq 
\int_G \chi_{V_1}(h) \chi_{F_n'}(g-h) \, \mu\Big(B_{V_1} \cap \Big( \bigcap_{k=1}^r (-\alpha_k(g-h)).B_{V_1}\Big)\Big) \, dm_G(h), \quad \textrm{for all $g \in G$}.
\end{align*}
Let us now put of all this together. It follows from the last inequality above that
\begin{align*}
&\sum_{\lambda \in \Lambda \cap F_n} 
\mu\Big(B_{V_o} \cap \Big( \bigcap_{k=1}^r (-\alpha_k(\lambda)).B_{V_o}\Big)\Big) 
= \sum_{\lambda \in \Lambda} \chi_{F_n}(\lambda) \cdot \mu\Big(B_{V_o} \cap \Big( \bigcap_{k=1}^r (-\alpha_k(\lambda)).B_{V_o}\Big)\Big) 
\\[0.2cm]
&\geq \frac{1}{m_G(V_1)} \cdot \sum_{\lambda \in \Lambda} \int_G \chi_{V_1}(h) \chi_{F_n'}(\lambda-h) \, dm_G(h) \cdot \mu\Big(B_{V_o} \cap \Big( \bigcap_{k=1}^r (-\alpha_k(\lambda)).B_{V_o}\Big)\Big)
\\[0.1cm]
&\geq \frac{1}{m_G(V_1)} \cdot  \sum_{\lambda \in \Lambda} \int_G \chi_{V_1}(h) \chi_{F_n'}(\lambda-h) \mu\Big(B_{V_1} \cap \Big( \bigcap_{k=1}^r (-\alpha_k(\lambda-h)).B_{V_1}\Big)\Big)\, dm_G(h) \\[0.1cm]
&= 
\frac{1}{m_G(V_1)} \cdot \sum_{\lambda \in \Lambda} \int_G \chi_{V_1}(\lambda-h) \chi_{F_n'}(h) \mu\Big(B_{V_1} \cap \Big( \bigcap_{k=1}^r (-\alpha_k(h)).B_{V_1}\Big)\Big)\, dm_G(h) \\[0.1cm]
&= \frac{1}{m_G(V_1)} \cdot \int_{F_n'} \mu\Big(B_{V_1} \cap \Big( \bigcap_{k=1}^r (-\alpha_k(h)).B_{V_1}\Big)\Big) \cdot \Big( \sum_{\lambda \in \Lambda} \chi_{V_1}(\lambda-h) \Big) \, dm_G(h),
\end{align*}
where we in the last step used Fubini's Theorem to interchange summation and integration. Let us now fix an identity neigbourhood $V_2$ in $G$ such that $V_2 - V_2 \subset V_1$. Since both $V_1$ and $\Lambda$ are symmetric, Lemma \ref{Lemma_lwbndnu} tells us that
\[
\sum_{\lambda \in \Lambda} \chi_{V_1}(\lambda-h) = \sum_{\lambda \in \Lambda} \chi_{V_1}(h+\lambda)
\geq \nu(\cO_{V_2} \cap h.\cO_{V_2}), \quad \textrm{for all $h \in G$},
\]
where $\nu$ is defined in (ii) above. Note that $\nu(\cO_{V_2}) > 0$. We have now shown that
\begin{align}
&\sum_{\lambda \in \Lambda \cap F_n} 
\mu\Big(B_{V_o} \cap \Big( \bigcap_{k=1}^r (-\alpha_k(\lambda)).B_{V_o}\Big)\Big) \nonumber \\[0.2cm]
&\geq 
\frac{1}{m_G(V_1)} \cdot  \int_{F_n'} \mu\Big(B_{V_1} \cap \Big( \bigcap_{k=1}^r (-\alpha_k(h)).B_{V_1}\Big)\Big) \cdot \nu(\cO_{V_2} \cap h.\cO_{V_2}) \, dm_G(h). \label{V1}
\end{align}
Let us now rewrite the integrand. First, let $Z = X \times \Omega_{\Lambda_o}^{\times}$
and
\[
A_o = B_{V_1} \times \cO_{V_2} \subset Z \qand \theta = \mu \otimes \nu \in \Prob_G(Z).
\]
Then, $\theta(A_o) = \mu(B_{V_1}) \cdot \nu(\cO_{V_2}) = m_G(V_1) \mu_Y(B) \nu(\cO_{V_2}) > 0$. \\

\noindent Furthermore, we define the Borel $G$-actions $a_k : G \times Z \ra Z$ by
\[
a_1(g,(x,\Lambda_o')) = (-\alpha_1(g).x,g.\Lambda_o') 
\]
and
\[
a_k(g,(x,\Lambda_o')) = (-\alpha_k(g).x,\Lambda_o'), \quad k = 2,\ldots,r,
\]
for $(x,\Lambda_o') \in Z$. 
Clearly these actions preserve $\theta$, they all commute with each other, and
\[
\mu\Big(B_{V_1} \cap \Big( \bigcap_{k=1}^r (-\alpha_k(g)).B_{V_1}\Big)\Big) \cdot \nu(\cO_{V_2} \cap g.\cO_{V_2})
= (\mu \otimes \nu)\Big( A_o \cap \Big( \bigcap_{k=1}^r a_k(g,A_o) \Big)\Big),
\]
for all $g \in G$. We now conclude from \eqref{muYmu} and \eqref{V1} that
\begin{align*}
&\sum_{\lambda \in \Lambda \cap F_n} \mu_Y\Big( \bigcap_{k=1}^r (-\alpha_k(\lambda)).B\Big)
\geq \\[0.2cm]
&\frac{1}{m_G(V_o)m_G(V_1)} \cdot \int_{F_n'} (\mu \otimes \nu)\Big( A_o \cap \Big( \bigcap_{k=1}^r a_k(g,A_o) \Big)\Big) \, dm_G(g)
\end{align*}
for all $n$. Since both $\mu$ and $\nu$ are probability measures, 
\begin{align*}
&\int_{F_n'} (\mu \otimes \nu)\Big( A_o \cap \Big( \bigcap_{k=1}^r a_k(g,A_o) \Big)\Big) \, dm_G(g)
\geq \\[0.2cm]
&\int_{F_n} (\mu \otimes \nu)\Big( A_o \cap \Big( \bigcap_{k=1}^r a_k(g,A_o) \Big)\Big) \, dm_G(g) - m_G(F_n \setminus F_n'),
\end{align*}
for all $n$. Hence, if we set
\[
\beta_o = \frac{1}{m_G(V_o)m_G(V_1)} \qand \delta_n = \frac{\beta_o \cdot m_G(F_n \setminus F_n')}{m_G(F_n)},
\]
then
\begin{align*}
&\sum_{\lambda \in \Lambda \cap F_n} \mu_Y\Big( \bigcap_{k=1}^r (-\alpha_k(\lambda)).B\Big) \geq \\[0.2cm]
&\beta_o \cdot \int_{F_n} (\mu \otimes \nu)\Big( A_o \cap \Big( \bigcap_{k=1}^r a_k(g,A_o) \Big)\Big) \, dm_G(g) - \delta_n \cdot m_G(F_n),
\end{align*}
for all $n$. Since $(F_n)$ is a strong F\o lner sequence, $(\delta_n)$ is a null sequence, and we are done.

\section{Proof of Theorem \ref{Thm_MainComb}}
\label{sec:prfcomb}
Let $\Lambda, \Lambda_o, P_o,q$ and $\alpha_1,\ldots,\alpha_r$ be as in Theorem \ref{Thm_MainComb}. Set $\Delta = \Lambda^q$, so that $\alpha_k(\Lambda) \subseteq \Delta$ for all $k$. Furthermore, by Lemma \ref{Lemma_TransSep}, 
\[
\Xi_{\cT_{P_o}} - \Delta \subset \overline{P_o - P_o} - \Lambda^q \subset \Lambda - \Lambda - \Lambda^q \subset \Lambda^{q+2},
\]
which is uniformly discrete in $G$ since $\Lambda$ is an approximate lattice. We conclude that the conditions on $\Lambda$ and $\Delta$ in Theorem \ref{Thm_MainErg} are satisfied. \\

Since $d^*(P_o) > 0$, Theorem \ref{Thm_CorrPrinciple} further tells us that there exists $\mu \in \Prob_{G}(\Omega^{\times}_{P_o})$ such that $\mu_{\cT_{P_o}}(\cT_{P_o}) > 0$ and
\begin{equation}
\label{dmu}
d^*\Big( P_o \cap \Big( \bigcap_{k=1}^r (P_o - \alpha_k(\lambda)) \Big) \Big) 
 \geq \mu_{\cT_{P_o}}\Big(\bigcap_{k=1}^r (-\alpha_k(\lambda)).\cT_{P_o}\Big),
\end{equation}
for all $\lambda \in \Lambda$. Hence, by Theorem \ref{Thm_MainErg}, 
applied to $X = \Omega_{P_o}^{\times}, Y = \cT_{P_o}$ and $B = \cT_{P_o}$,
there exists $c > 0$ such that
\[
S_o = \big\{ \lambda \in \Lambda \, : \, 
\mu_{\cT_{P_o}}\big(\bigcap_{k=1}^r (-\alpha_k(\lambda)).\cT_{P_o}\big) \geq c\big\}
\]
is syndetic in $\Lambda$. By \eqref{dmu}, we have $S_o \subset S$,
where
\[
S= \Big\{ \lambda \in \Lambda \, : \,  d^*\Big( P_o \cap \Big( \bigcap_{k=1}^r (P_o - \alpha_k(\lambda)) \Big) \Big)
\geq c\Big\},
\]
and thus $S$ is syndetic in $\Lambda$ as well.
\appendix

\section{Szemer\'edi's Theorem for cut-and-project sets in Euclidean spaces via the IP-Szemer\'edi Theorem}

The aim of this section is to prove the following variation of Theorem \ref{Thm_R}
using the deep ergodic $\IP$-theory developed by Furstenberg and Katznelson in \cite{FK2}, and further extended by Bergelson and McCutcheon in \cite{BM2}. 

\begin{theorem}
\label{Thm_Rext}
Let $\Lambda \subset \bR^d$ be an approximate lattice and suppose that 
\[
\Delta = \Delta(\bR,\bR^m,\Gamma,W) \subset \bR 
\]
is a cut-and-project set for some lattice $\Gamma < \bR \times \bR^m$ which projects injectively to 
$\bR$ and densely to $\bR^m$, and for some bounded Borel set $W \subset \bR^m$ whose interior
is a convex and symmetric set containing $0$. Assume that $\delta \cdot \Lambda \subset \Lambda$ for every $\delta \in \Delta$. Then, for every $P_o \subset \Lambda$
with positive upper Banach density and finite subset $F \subset \Lambda^\infty$, 
there exist an integer $n$ and a syndetic and symmetric subset $\Delta_n \subset \Delta$ such that
$\Delta_n^n \subset \Delta$ with the property that for every  $\delta \in \Delta_n$,
there exist an integer $j = 1,\ldots,n$ and $p_o \in P_o$, such that
\[
p_o + j\delta \cdot F \subset P_o.
\] 
\end{theorem}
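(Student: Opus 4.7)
The plan is to combine the transverse correspondence principle (Theorem~\ref{Thm_CorrPrinciple}) with the $\IP$-Szemer\'edi theorem of Furstenberg and Katznelson \cite{FK2}, applied along an $\IP$-system of scaling parameters extracted from $\Delta$ via its cut-and-project description. As a first step, Theorem~\ref{Thm_CorrPrinciple} applied to $P_o$ yields an ergodic $\mu \in \Prob_{\bR^d}(\Omega_{P_o}^{\times})$ with positive transverse mass $\mu_{\cT_{P_o}}(\cT_{P_o}) = d^*(P_o) > 0$, together with the inequality
\[
d^*\Bigl(P_o \cap \bigcap_{k=1}^r (P_o - \lambda_k)\Bigr) \;\geq\; \mu_{\cT_{P_o}}\Bigl(\bigcap_{k=1}^r (-\lambda_k).\cT_{P_o}\Bigr),
\]
valid for all finite tuples $(\lambda_k)$ in a suitable portion of $\Lambda^\infty$ (the separation hypothesis is controlled by the uniform discreteness of $\Lambda^q$ for large $q$, which will contain every $j\delta f_k$ we encounter). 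It then suffices to find $n$ and a syndetic symmetric $\Delta_n \subset \Delta$ with $\Delta_n^n \subset \Delta$ so that the right-hand side is strictly positive for $(\lambda_k)=(j\delta f_k)_k$ for some $j \in \{1,\ldots,n\}$ depending on $\delta$.

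Second, build an $\IP$-system $(\delta_\alpha)_{\alpha \in \cF}$ entirely inside $\Delta$. Writing $\Delta = \pr_{\bR}(\Gamma \cap (\bR\times W))$, pick generators $\delta_i \in \Delta$ with lifts $(\delta_i, w_i) \in \Gamma$ satisfying $\|w_i\| < \varepsilon_i$, where $(\varepsilon_i)$ is positive and summable with total less than the in-radius of the convex symmetric set $W^o$ at $0$. Such lifts exist by density of $\pr_{\bR^m}(\Gamma)$ in $\bR^m$, and convexity with symmetry of $W^o$ give $\sum_{i \in \alpha} w_i \in W^o$ for every finite $\alpha \subset \bN$; consequently $\delta_\alpha := \sum_{i \in \alpha}\delta_i \in \Delta$ for every nonempty finite $\alpha$. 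A slight refinement of the generators moreover arranges them to be $\bQ$-linearly independent.

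Third, apply the $\IP$-multiple-recurrence theorem of Furstenberg and Katznelson \cite{FK2} along the $\IP$-system $(\delta_\alpha)$. The $r$ translations $T^k : x \mapsto x + f_k$ are commuting on $\bR^d$, and the $\bR^d$-action on $\Omega_{P_o}^{\times}$ extends them to $r$ commuting $\bR$-actions $T^{k,\delta}: x \mapsto (f_k\delta).x$; the required $\IP$-recurrence for these $\bR$-actions follows from the countable case by restricting to the countable subgroup $\langle \delta_i \rangle$ and then invoking a uniform-continuity argument parallel to Section~\ref{Sec:FK}. This yields that the set
\[
S^\star \;:=\; \Bigl\{\alpha \in \cF \;:\; \mu\Bigl(B_V \cap \bigcap_{k=1}^r (-\delta_\alpha f_k).B_V\Bigr) > 0\Bigr\}
\]
is $\IP^*$ in $\cF$, where $B_V = V.\cT_{P_o}$ for an identity neighborhood $V$ small enough that $V - V \subset U$ and $(\Xi_{\cT_{P_o}} - \Lambda^q) \cap V = \{0\}$ for a $q$ large enough to contain all $j\delta F$ encountered.

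Finally, convert the $\IP^*$-property of $S^\star$ into the advertised syndetic-modulo-$n$ statement. A standard finitary/Hindman-type consequence of $\IP^*$-ness yields $n$ such that every $\IP$-configuration of length $n$ built from $(\delta_i)$ already hits $S^\star$ in one of its $2^n - 1$ nonempty partial sums. Spreading the generators $(\delta_i)$ along $\Delta$ (using relative denseness of $\Delta$ together with approximate-group shifts to cover a F\o lner exhaustion) and defining $\Delta_n \subset \Delta$ to be the (symmetrised) collection of base scalings of such length-$n$ $\IP$-configurations, one arranges $\Delta_n$ to be syndetic in $\Delta$, with $\Delta_n^n \subset \Delta$ built into the $\IP$-construction. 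For each $\delta \in \Delta_n$ some $j \in \{1,\ldots,n\}$ witnesses $S^\star$-membership, and Step~1 then produces $p_o \in P_o$ with $p_o + j\delta\cdot F \subset P_o$. The main obstacle is precisely this last step: transferring the abstract $\IP^*$-structure on $\cF$ to a genuinely syndetic structure on the approximate lattice $\Delta$, which requires balancing spread of $(\delta_i)$ in $\Delta$ (for syndeticity) against smallness of window residues (for $\IP$-closure); this is where convexity of $W$ enters beyond mere $\IP$-system existence.
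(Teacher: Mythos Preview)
Your approach takes a substantially longer route than the paper and leaves a genuine gap at the end, precisely where you flag it yourself.

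The paper's proof is far more direct and avoids both the correspondence principle and the infinitary $\IP$-recurrence machinery. It thickens $P_o$ to the open set $A = P_o + V \subset \bR^d$ (for $V$ small enough that $V - V \subset U$ with $\Lambda^{q+2} \cap U = \{0\}$), observes that $A$ has positive density, and applies the \emph{finitary} combinatorial $\IP$-Szemer\'edi theorem \cite[Theorem~9.5]{FK2} directly to $A$. That theorem outputs a single integer $n = n(\varepsilon,|F|)$ with the property that \emph{every} $\IP_n$-system $\varphi$ with values in $\bR^+$ admits $\alpha \in 2^{[n]}$ and $x \in \bR^d$ with $x + \varphi(\alpha)\cdot F \subset A$. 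The set $\Delta_n$ is then defined explicitly as the cut-and-project set $\Delta(\bR,\bR^m,\Gamma,\tfrac{1}{n}W^o)$ with the shrunk window; this is automatically syndetic, symmetric, and satisfies $\Delta_n^n \subset \Delta$ by convexity of $W^o$. For each fixed $\delta \in \Delta_n$ one applies the finitary theorem to the \emph{constant} $\IP_n$-system $\varphi_{n,\delta}(\alpha) = |\alpha|\cdot\delta$, yielding $x + j\delta \cdot F \subset P_o + V$ for some $j \in \{1,\ldots,n\}$. A short uniform-discreteness argument (since $j\delta\cdot F \subset \Lambda^q$ and $P_o - P_o \subset \Lambda^2$) then forces all the $V$-perturbations to coincide, giving $p_o + j\delta\cdot F \subset P_o$.

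Your plan misses both of the two simple moves that make this work: the explicit window-shrinking definition of $\Delta_n$, and the use of a constant $\IP_n$-system for each $\delta$ separately (exploiting the uniformity of $n$ over \emph{all} $\IP_n$-systems). Instead you build a single $\IP$-system with distinct generators $\delta_i$, prove $\IP^*$-recurrence along it, and then try to manufacture a syndetic $\Delta_n$ after the fact by ``spreading the generators along $\Delta$''. That last step is not a standard consequence of $\IP^*$-ness: an $\IP^*$ set in $\cF$ tells you something about partial sums of one fixed sequence, not about the orbit $\{j\delta : 1 \le j \le n\}$ for a syndetic family of $\delta$'s. The correspondence principle and the dynamical detour are unnecessary here and do not help close that gap.
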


\begin{remark}
Theorem \ref{Thm_R} corresponds to 
\[
d = m = 1 \qand \Lambda = \Delta = \Delta(\bR,\bZ[\sqrt{D}],[-1,1]).
\]
Furthermore, since $\Delta_n^n \subset \Delta$, we have $j\delta \in \Delta$ for all $j =1,\ldots,n$. In particular, this shows that the set $S_F$ in Theorem \ref{Thm_R} is non-empty.
\end{remark}

\subsection*{Ergodic $\IP$-theory}

Let $H$ be an abelian semi-group and $n$ a positive integer. We denote by $2^{[n]}$
the set of all subsets of the set $[n] = \{1,\ldots,n\}$. A function 
$\varphi : 2^{[n]} \ra H$ such that
\[
\varphi(\alpha \cup \beta) = \varphi(\alpha) + \varphi(\beta), \quad 
\textrm{for all $\alpha, \beta \in 2^{[n]}$ such that $\alpha \cap \beta = \emptyset$}
\]
is called an \emph{$\IP_n$-system with values in $H$}. 

\begin{example}
Let $L \subset H$ be a set with at most $n$ elements, and fix $h_1,\ldots,h_n \in L$ (allowing repetitions). Then 
\[
\varphi : 2^{[n]} \ra H, \quad \varphi(\alpha) = \sum_{k \in \alpha} h_{k}
\]
is an $\IP_n$-system with values in $H$. In particular, if we fix $h \in H$, and
set $h_k = h$ for all $k=1,\ldots,n$, then
\[
\varphi(\alpha) = |\alpha| \cdot h, \quad \textrm{for all $\alpha \in 2^{[n]}$}.
\]
We denote this special $\IP_n$-system by $\varphi_{n,h}$.
\end{example}

We will deduce Theorem \ref{Thm_Rext} from the following result (\cite[Theorem 9.5]{FK2}), which is only stated for subsets in $\bR^d$ in \cite{FK2}. We denote by $\bR^{+}$
the additive semigroup of positive real numbers.

\begin{theorem}[Furstenberg-Katznelson]
\label{ThmFK}
Let $A \subset \bR^d$ be a Borel set and suppose that 
\[
\eps := \varlimsup_{n \ra \infty} \frac{m_{\bR^d}(A \cap [-n,n]^d)}{n^d} > 0,
\]
Then, for every finite set $F \subset \bR^d$, there exists an integer $n = n(\eps,|F|)$ with the property that for every 
$\IP_n$-system $\varphi$ with values in $\bR^{+}$, there exist $\alpha \in 2^{[n]}$ and $x \in \bR^d$ such that 
\[
x + \varphi(\alpha) \cdot F \subset A.
\]
\end{theorem}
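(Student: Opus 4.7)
The plan is to follow the two-step ergodic strategy of Furstenberg and Katznelson: first recast the combinatorial statement as one about multiple recurrence in a measure-preserving $\bR^d$-system via a correspondence principle, and then invoke (or derive) an IP-multiple recurrence theorem for such systems.

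The first step is to build, from the Borel set $A\subset\bR^d$ of positive upper density $\eps$, a compact metrizable $\bR^d$-space $X$, an invariant Borel probability measure $\mu$, and a measurable set $B\subset X$ with $\mu(B)\gtrsim\eps$, such that whenever $G\subset\bR^d$ is finite and $\mu\bigl(\bigcap_{g\in G}T^{-g}B\bigr)>0$, the set $\{x\in\bR^d:x+G\subset A\}$ is nonempty. I would take $X$ to be the weak-$*$ orbit closure of a mild smoothing $\chi_A*\phi$ (with $\phi$ a small compactly supported bump) inside $L^\infty(\bR^d,[0,1])$ under the translation $\bR^d$-action, define $\mu$ as a weak-$*$ limit of the empirical averages $\frac{1}{m([-n_k,n_k]^d)}\int_{[-n_k,n_k]^d}\delta_{T^t(\chi_A*\phi)}\,dt$ along a sequence $n_k\to\infty$ realizing $\eps$, and set $B=\{f\in X:f(0)\geq\eta\}$ for a suitable threshold $\eta$. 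The correspondence property is verified by a Lebesgue density argument applied to $\bigcap_{g\in G}T^{-g}B$, and the lower bound $\mu(B)\gtrsim\eps$ is checked by evaluating $\mu$ on $B$ using the defining averages.

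The second step is the dynamical heart of the proof: for every probability-preserving $\bR^d$-system $(X,\mu,T)$, every $B$ with $\mu(B)\geq\eps$, and every finite $F\subset\bR^d$, there is an integer $n=n(\eps,|F|)$ such that for every $\IP_n$-system $\varphi$ with values in $\bR^+$ one can find $\alpha\in 2^{[n]}$ with
\[
\mu\Bigl(\bigcap_{f\in F}T^{-\varphi(\alpha)f}B\Bigr)>0.
\]
This is precisely the Furstenberg--Katznelson IP-Szemer\'edi theorem of \cite{FK2} (refined by Bergelson--McCutcheon \cite{BM2}), and combined with the correspondence above it gives Theorem \ref{ThmFK}. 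It is proved by induction along a Furstenberg tower $X_0=\{*\}\to X_1\to\cdots\to X$ whose successive steps are either compact or weakly mixing extensions: multiple IP-recurrence is trivial on $X_0$, preserved under compact extensions by a Ramsey-type argument using the Hindman/Milliken--Taylor theorem to extract sub-IP structure on which uniform recurrence applies, and preserved under weakly mixing extensions by an IP-adapted van der Corput estimate phrased in the IP-limit language of \cite{FK2}. The passage from $\bR^d$-actions to a dense countable subgroup such as $\bQ^d$ (where the theorem is classical) and back is handled exactly as in the proof of Theorem \ref{Thm_SyndMultRecG}: the correlation $g\mapsto\mu\bigl(B\cap\bigcap_k T^{-g_k}B\bigr)$ is uniformly continuous in its parameters, so a countable version of the statement implies the continuous one.

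The main obstacle is the ergodic-structural content of the second step; one cannot realistically shorten the \cite{FK2} machinery (Furstenberg tower, IP-limits, Hindman's theorem, IP-van der Corput). The correspondence step and the reduction from $\bR^d$ to a dense countable subgroup are, by contrast, essentially routine adaptations of arguments already present in Section \ref{sec:Prel} and Section \ref{Sec:FK} of this paper.
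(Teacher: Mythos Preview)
The paper does not prove Theorem \ref{ThmFK} at all: it is quoted verbatim as \cite[Theorem 9.5]{FK2} and used as a black box in the appendix to derive Theorem \ref{Thm_Rext}. So there is no ``paper's own proof'' to compare against; your proposal is a sketch of how the cited Furstenberg--Katznelson argument goes rather than an alternative to anything in this paper.

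That said, your outline is broadly faithful to the \cite{FK2} strategy (correspondence principle, then IP multiple recurrence via a Furstenberg tower with compact/weakly mixing dichotomy and Hindman/Milliken--Taylor combinatorics). One point to flag: the reduction ``from $\bR^d$ to a dense countable subgroup'' that you propose, modelled on Section \ref{Sec:FK}, does not transfer cleanly to the IP setting. In Theorem \ref{Thm_SyndMultRecG} the uniform continuity of $g\mapsto\theta\bigl(A\cap\bigcap_k a_k(g,A)\bigr)$ lets you pass from a countable dense subgroup back to $G$ because the conclusion is about individual group elements. Here the conclusion is quantified over \emph{all} $\IP_n$-systems with values in $\bR^+$, and the integer $n$ must depend only on $\eps$ and $|F|$; restricting to $\bQ$-valued $\IP_n$-systems and then perturbing does not obviously give the uniform $n$ for arbitrary real-valued systems. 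The Furstenberg--Katznelson proof in \cite{FK2} handles the continuous parameter directly inside the IP-limit formalism rather than by a density argument, so if you wanted to actually carry this out you would need to work with $\bR^+$-valued IP-systems throughout the structural induction rather than reduce to a countable subgroup at the end.
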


\subsection*{Proof of Theorem \ref{Thm_Rext}}

Let $\Lambda, \Delta, \Gamma$ and $W$ be as in Theorem \ref{Thm_Rext}. Let 
$F_o = \{\lambda_1,\ldots,\lambda_r\} \subset \Lambda^\infty$ be a finite set, 
and define $F = \{0,\lambda_1,\ldots,\lambda_r\}$. Let $q \geq 1$ be the smallest
integer such that $F \subset \Lambda^q$. Since $\Lambda$ is an approximate lattice,
there is an open identity neighbourhood $U$ in $\bR^d$ such that $\Lambda^{q+2} \cap U = \{0\}$. \\

Suppose $P_o \subset \Lambda$ has positive upper asymptotic density along $([-n,n]^d)$ in $\bR^d$. Fix an open identity neighbourhood $V$ in $\bR^d$ such that $V-V \subset U$, and define the open set $A = P_o + V \subset \bR^d$. 
Since $P_o$ has positive upper asymptotic density along $([-n,n]^d)$, we have
\[
\eps := \varlimsup_{n \ra \infty} \frac{m_{\bR^d}(A \cap [-n,n]^d)}{n^d} > 0.
\]
By Theorem \ref{ThmFK}, there exists an integer $n$, only depending on $\eps$ and $r$
with the property that for every $\IP_n$-system $\varphi$ with values in $\bR^{+}$, 
there exist $\alpha \in 2^{[n]}$ and $x \in \bR^d$ such that
\begin{equation}
\label{conclFK}
x + \varphi(\alpha) \cdot F \subset A = P_o + V.
\end{equation}
Define $\Delta_n = \Delta(\bR,\bR^m,\Gamma,\frac{1}{n}W^o)$, where $W^o \subset W$ denotes
the interior of $W$, which is assumed to be convex and symmetric. In particular, $\Delta_n$ is again a cut-and-project set, and is thus both relatively dense and uniformly discrete in $\bR^d$. Furthermore, 
\[
\Delta_n^n \subset \Delta\Big(\bR,\bR^m,\Gamma,\frac{1}{n}W^o + \ldots + \frac{1}{n}W^o\Big) \subset \Delta(\bR^m,\Gamma,W^o) \subset \Delta.
\]
Hence, if we pick any $\delta \in \Delta_n \cap \bR^{+}$, then 
$\varphi_{n,\delta} : 2^{[n]} \ra \bR^{+}$, as defined in the example above, is 
an $\IP_n$-system such that 
\[
\varphi_{n,\delta}(\alpha) = |\alpha| \cdot \delta \in \Delta, \quad \textrm{for all $\alpha \in 2^{[n]}$}.
\]
Fix $\delta \in \Delta_n$. By \eqref{conclFK}, 
there exist $j=1,\ldots,n$ and $x \in \bR^d$ such that
\[
x + j \delta \cdot \{0,\lambda_1,\ldots,\lambda_r\} \subset P_o + V.
\]
Set $\lambda_o = 0$. The inclusion above implies that there are $p_o,p_1,\ldots,p_r \in P_o$ and $v_o,v_1,\ldots,v_r \in V$ such that
\[
x + j\delta \cdot \lambda_k = p_k + v_k, \quad \textrm{for all $k=0,1,\ldots,r$}.
\]
Note that since $j\delta \in \Delta$ for all $j = 1,\ldots,n$ and $j\delta \cdot \Lambda \subset \Lambda$, we have
\[
j\delta \cdot \lambda_k \in j\delta \cdot \Lambda^q \subset \Lambda^q, \quad \textrm{for all $k=0,1,\ldots,r$}.
\]
By taking differences, 
\[
(p_k + v_k) - (p_o + v_o) = j\delta \cdot \lambda_k \in \Lambda^q, \quad \textrm{for all $k$},
\]
and thus $v_k-v_o \in (\Lambda^q - P_o + P_o) \cap (V-V) \subset \Lambda^{q+2} \cap U = \{0\}$ for all $k$. We conclude that $v_o = v_1 = \ldots = v_r$, so 
\[
p_k = p_o + j\delta \cdot \lambda_k, \quad \textrm{for all $k=1,\ldots,r$}.
\]
Hence, $p_o + j\delta \cdot \{\lambda_1,\ldots,\lambda_r\} \subset P_o$ and $p_o \in 
P_o$, which finishes the proof.

\subsection*{A few comments about the proof}

This proof can be extended to other lcsc abelian groups, thus providing \emph{some} (potentially weaker) version of our main combinatorial Theorem \ref{Thm_MainComb} for \emph{cut-and-project sets}. To also establish syndeticity of the sets $S$ in Theorem \ref{Thm_MainComb} along these lines (at least in the case when the ambient group $G$
contains a dense finitely generated group), a theory of $\IP^*$-sets in approximate lattices needs to be developed. However, the proof above uses that the approximate lattice $\Delta \subset \bR$ is a cut-and-project set, and thus contains, for every $n \geq 1$, a syndetic set $\Delta_n \subset \Delta$ such that $\Delta_n^n \subset \Delta$. As we have already pointed out in Remark \ref{Rmk_NoPwrs}, this property does not hold for arbitrary approximate lattices, so if one wants to prove Theorem \ref{Thm_MainComb} in its full generality using this approach, then some other way of producing $\IP$-systems inside an approximate lattice is needed.

\end{document}